\documentclass{amsart}
\usepackage{lmodern}
\usepackage{stmaryrd}
\usepackage{lipsum}
\usepackage{subfigure}
\usepackage{hyperref}
\usepackage{graphicx,color}
\usepackage{stix} 
\usepackage{amssymb,amsmath,bm,mathtools}
\usepackage{enumerate}
\usepackage{tkz-graph}
\usepackage{ragged2e}
\usepackage{enumitem}
\usepackage{adjustbox}
\usepackage{tipa}
\usepackage{circuitikz} 
\usepackage{scalerel}
\usepackage[margin=1in]{geometry}
\usepackage{parskip}
\usepackage{comment}

\usepackage{tikz-cd}
\usepackage{adjustbox}
\usepackage{tikz}
\usepackage[all,knot]{xy}
\usepackage{transparent}
\usepackage{graphicx,import}

\usetikzlibrary{arrows,decorations.pathmorphing}
\usetikzlibrary{backgrounds,positioning}
\usetikzlibrary{fit,petri,shapes.misc}
\usetikzlibrary{shapes.geometric}
\usetikzlibrary{decorations.markings}
\usetikzlibrary{calc}
\usetikzlibrary{braids}

\newlength{\defaultpgflinewidth}
\tikzset{auto}

\tikzset{empty/.style={circle,inner sep=0pt,minimum size=6mm}}
\tikzset{emptyvt/.style={circle,inner sep=0pt,minimum size=0mm}}

\tikzset{plain/.style={circle,draw,very thick,
inner sep=0pt,minimum size=6mm}}

\tikzset{xplain/.style={circle,draw,very thick,
inner sep=0pt,minimum size=8mm}}

\tikzset{smallplain/.style={circle,draw,very thick,
inner sep=0pt,minimum size=4mm}}

\tikzset{xsplain/.style={circle,draw, thick,
inner sep=0pt,minimum size=1.5mm}}

\tikzset{tinyplain/.style={circle,draw, thick,
inner sep=0pt,minimum size=1mm}}

\tikzset{smalldotted/.style={circle,draw,very thick, densely dotted,
inner sep=0pt,minimum size=3mm}}

\tikzset{dottedplain/.style={circle,draw,very thick, densely dotted,
inner sep=0pt,minimum size=6mm}}

\tikzset{normaldot/.style={circle,black,fill=black,inner sep=0pt,minimum size=2mm}}

\tikzset{rectplain/.style={rectangle,draw,very thick,minimum size=6mm}}
\tikzset{bigplain/.style={rectangle,draw,very thick,minimum size=1cm}}

\tikzset{triangular/.style={regular polygon, regular polygon sides=3, draw,very thick,
inner sep=0pt,minimum size=1.2cm}}

\tikzset{arrow/.style={->,thick}}
\tikzset{dashedarrow/.style={->,dashed,thick}}
\tikzset{dottedarrow/.style={->,dotted,thick}}
\tikzset{mapto/.style={|->,thick}}
\tikzset{->-/.style={decoration={markings, mark=at position #1 with {\arrow{>}}},postaction={decorate}}}

\tikzset{implies/.style={thick,double,double equal sign distance,-implies}} 

\tikzset{line/.style={thick}}
\tikzset{dottedline/.style={dotted,thick}}
\tikzset{dashedline/.style={dashed,thick}}

\tikzset{inputleg/.style={<-,thick}}
\tikzset{outputleg/.style={->,thick}}
\tikzset{dottedinput/.style={<-,dotted,thick}}

\usepackage{transparent}
\usepackage{graphicx,import}
\usepackage{caption}

\usepackage{enumitem}

\hypersetup{
	colorlinks   = true, 
	urlcolor     = blue, 
	linkcolor    = black, 
	citecolor   = brown 
}

\usepackage{todonotes}
\newcommand\reallywidehat[1]{\arraycolsep=0pt\relax%
\begin{array}{c}
\stretchto{
  \scaleto{
    \scalerel*[\widthof{\ensuremath{#1}}]{\kern-.5pt\bigwedge\kern-.5pt}
    {\rule[-\textheight/2]{1ex}{\textheight}} 
  }{\textheight} %
}{0.5ex}\\           
#1\\                 
\rule{-1ex}{0ex}
\end{array}
}

\newcommand{\bC}{\mathbf{C}}

\newcommand{\bT}{\mathbf{T}}
\newcommand{\bE}{\mathbf{E}}

\newcommand{\bA}{\mathbf{A}}

\newcommand{\calO}{\mathcal{O}}
\newcommand{\calQ}{\mathcal{Q}}

\newcommand{\bS}{\mathbf{S}}

\newcommand{\kk}{\mathbb{K}}    

\newcommand{\PaA}{\mathsf{PaA}}
\newcommand{\CoRB}{\mathsf{CoRB}}
\newcommand{\PaRB}{\mathsf{PaRB}}
\newcommand{\PaRCD}{\mathsf{PaRCD}}
\newcommand{\RCD}{\mathsf{RCD}}
\newcommand{\PaCD}{\mathsf{PaCD}}
 
\newcommand{\A}{\mathsf{A}} 

\newcommand{\magma}{\Omega}

\newcommand{\hA}{{\A}_{\kk}}

\newcommand{\hRCD}{{\RCD}_{\kk}}
\newcommand{\hPaB}{{\PaB}_{\kk}}
\newcommand{\hPaA}{{\PaA}_{\kk}}
\newcommand{\hPaCD}{{\PaCD}_{\kk}}

\newcommand{\hPaRB}{{\PaRB}_{\kk}}
\newcommand{\hPaRCD}{{\PaRCD}_{\kk}}

\newcommand{\PaB}{\mathsf{PaB}}

\newcommand{\calP}{\mathcal{P}}

\newcommand{\cyc}{\mathbf{cyc}}

\newcommand{\Cyc}{\mathbf{Cyc}}
\newcommand{\Op}{\mathbf{Op}}

\newcommand{\Prop}{\mathbf{Prop}}

\newcommand{\id}{\operatorname{id}}

\newcommand{\Iso}{\operatorname{Iso}}

\newcommand{\End}{\operatorname{End}}
\newcommand{\Hom}{\operatorname{Hom}}

\newcommand{\ob}{\operatorname{ob}}
\newcommand{\Aut}{\operatorname{Aut}}

\newcommand{\HoAut}{\operatorname{HoAut}}
\newcommand{\Env}{\operatorname{Env}}

\newcommand{\lrar}{\ensuremath{\longrightarrow}}

\newcommand{\GT}{{\mathsf{GT}}}
\newcommand{\GRT}{{\mathsf{GRT}}}

\newcommand{\Assoc}{\mathsf{Assoc}}
\newcommand{\assoc}{\mathfrak{assoc}} 

\newcommand{\PB}{\mathsf{PB}} 

\newcommand{\RB}{\mathsf{RB}} 
\newcommand{\PRB}{\mathsf{PRB}} 

\newcommand{\ft}{\mathfrak{t}}
\newcommand{\ff}{\mathfrak{f}}
\newcommand{\fft}{\mathfrak{ft}}

\newtheorem{theorem}{Theorem}[section]
\newtheorem{thm}[theorem]{Theorem}
\newtheorem{prop}[theorem]{Proposition}
\newtheorem{lemma}[theorem]{Lemma}
\newtheorem{cor}[theorem]{Corollary}
\newtheorem*{thm*}{Theorem}

\newtheorem{Th}{Theorem}

\theoremstyle{definition}
\newtheorem{definition}[theorem]{Definition}
\newtheorem{remark}[theorem]{Remark}

\numberwithin{equation}{section}

\let\oldtocsection=\tocsection
\let\oldtocsubsection=\tocsubsection
\renewcommand{\tocsection}[2]{\hspace{0em}\oldtocsection{#1}{#2}}
\renewcommand{\tocsubsection}[2]{\hspace{1em}\oldtocsubsection{#1}{#2}}
\DeclareRobustCommand{\SkipTocEntry}[5]{}
\renewcommand{\paragraph}[1]{\textbf{{#1}.}\hspace{5pt}}

\title[Cyclic Symmetries of Chord Diagrams]{Cyclic Symmetries of Chord Diagrams}

\author[C. Singh]{Chandan Singh}
\address{School of Mathematics and Statistics \\ The University of Melbourne \\ Melbourne, Victoria, Australia}
\email{chandans@student.unimelb.edu.au}

\begin{document}

\begin{abstract}
We give a direct proof that the proalgebraic graded Grothendieck--Teichmüller group $\GRT_{\kk}$ is isomorphic to the group of automorphisms of the prounipotent cyclic operad of parenthesized ribbon chord diagrams based on Furusho’s 5-cycle reformulation of the pentagon equation. As an application, we describe a $\GRT_{\kk}$-action on the category of framed chord diagrams with self-dual objects, which is closely related to the target category of the Kontsevich integral for framed tangles.

\end{abstract}
\maketitle

\section*{Introduction}

The framed little disks operad carries a cyclic structure \cite{Budney}. This structure is inherited by the operad of parenthesized ribbon braids $\PaRB$, as well as by its infinitesimal counterpart, the operad of parenthesized ribbon chord diagrams $\PaRCD$. Since these operads also admit actions of the Grothendieck--Teichm\"uller group, $\GT_{\kk}$, and its graded version, $\GRT_{\kk}$, it is natural to ask how these Grothendieck--Teichm\"uller symmetries interact with their cyclic structures.

In this paper, we study the cyclic operad of parenthesized ribbon chord diagrams $ \PaRCD^{\cyc} $, obtained from the cyclic structure on framed Drinfeld--Kohno Lie algebras from \cite{campos2019configuration, severa2009formality}, and show that its automorphism group recovers $\GRT_{\kk} $. More precisely, our first main result (Theorem~\ref{thm: GRT is cyclic PaRCD}) identifies $ \GRT_{\kk} $ with the group of object-fixing automorphisms of the prounipotent cyclic operad $ \PaRCD^{\cyc} $:
\begin{Th}\label{thmA}
    $\GRT_{\kk} \cong \Aut_{\Cyc}^{+}(\hPaRCD^{\cyc})$.
\end{Th}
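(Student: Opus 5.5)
The strategy is to reduce to the known non-cyclic statement and then show that the cyclic structure imposes no extra constraint on $\GRT_{\kk}$. I take as known (Tamarkin, Fresse, Willwacher) that $\GRT_{\kk}\cong\Aut^{+}_{\Op}(\hPaCD)$, and its ribbon/framed analogue $\GRT_{\kk}\cong\Aut^{+}_{\Op}(\hPaRCD)$. The framed Drinfeld--Kohno algebra splits as $\ft_n^{fr}=\ft_n\oplus\kk^n$ with central framing generators $t_{ii}$, so object-fixing automorphisms must fix the $t_{ii}$ up to the same rescaling as the $t_{ij}$. Such an automorphism is determined by its value on the associator arrow $a\in\hPaRCD(3)$, $(12)3\to1(23)$: it sends $a\mapsto a\cdot g(t_{12},t_{23})$, and may additionally rescale $t_{ij}\mapsto\lambda t_{ij}$. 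The conditions on $(\lambda,g)$ are the hexagon and pentagon equations for $g\in\exp\hat{\mathfrak{f}}_2$, and these cut out $\GRT_{\kk}$.

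The proof then has three steps.

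\textbf{Step 1: forgetful map.} Forgetting the cyclic structure gives an injective group homomorphism $\Aut^{+}_{\Cyc}(\hPaRCD^{\cyc})\to\Aut^{+}_{\Op}(\hPaRCD)$. An automorphism of a cyclic operad is an operad automorphism commuting with the $\mathbb{Z}/(n+1)$-actions, so this map is injective.

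\textbf{Step 2: identify the generators.} In $\hPaRCD^{\cyc}$, the cyclic rotation $\tau$ acts on arities $\le 2$ and on the associator arrow. I would write down explicitly how $\tau$ acts on the generators: the braiding $R$, the twist/framing element, and $a$. In particular, $\tau$ applied to the associator yields a 5-cycle relation in $\hPaRCD(4)$, i.e.\ in the $\mathbb{Z}/5$-symmetric presentation of $\ft_4^{fr}$ coming from \cite{campos2019configuration, severa2009formality}. This is exactly where Furusho's reformulation enters. For $g$ satisfying the 2-cycle condition $g(x,y)g(y,x)=1$ and the 3-cycle (hexagon) condition, the pentagon equation is equivalent to a 5-cycle identity among the $g$'s placed on the five cyclically arranged edges of a pentagon in the moduli space $\overline{M}_{0,5}$.

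\textbf{Step 3: surjectivity.} The main obstacle is showing that every $(\lambda,g)\in\GRT_{\kk}$ gives an automorphism commuting with $\tau$. Since $\tau$ is determined by its values on generators and the automorphism is an operad map, it suffices to check compatibility on the generating arities, namely the arrows in arity 2 and 3. In arity 2, compatibility holds because $\lambda$ rescales $t_{ii}$ and $t_{12}$ uniformly, and $\tau$ exchanges $t_{12}$ with a combination of framing terms. This uses the cyclic relation $\sum_j t_{ij}=0$ in the framed cyclic Drinfeld--Kohno algebra. In arity 3, I must show $\tau(a)\cdot g(\tau t_{12},\tau t_{23})=\tau(a\cdot g(t_{12},t_{23}))$. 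Under cyclic rotation, $(t_{12},t_{23})$ maps to $(t_{23},t_{30})=(t_{23},t_{12})$ modulo the center, using the relation $t_{30}=t_{12}$ up to framing in the cyclic $\ft_3$. The required identity therefore reduces to the duality $g(x,y)^{-1}=g(y,x)$, which holds in $\GRT_{\kk}$. Higher arities follow from the operadic compositions. If that reduction fails to close, I would instead check the 4-ary rotation directly, using Furusho's 5-cycle form of the pentagon equation.

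Combining the three steps gives a bijective homomorphism, which proves $\GRT_{\kk}\cong\Aut^{+}_{\Cyc}(\hPaRCD^{\cyc})$.
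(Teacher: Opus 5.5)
Your proposal is correct and is essentially the paper's argument. You take $\GRT_{\kk}\cong\Aut^{+}_{\Op}(\hPaRCD)$ as known and check $z^{*}g=g\,z^{*}$ on the generators $I$, $H$, $X$, $A$; the associator check reduces to the inversion relation \eqref{GRT:I}, exactly as in Lemma~\ref{lemma: maps PaRB to PaRCD}.

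The paper additionally verifies that \eqref{GRT:I}, \eqref{GRT:H}, \eqref{GRT:P} are preserved by $z^{*}$ (Propositions~\ref{prop: cyclic GRT on (H) and (I)} and~\ref{prop: cyclic GRT on (P)}, the pentagon via Furusho's $5$-cycle form). You reasonably treat this only as a fallback, since cyclic compatibility of an already-given operad automorphism is a generator check. Note, though, that the $5$-cycle comes from rotating the pentagon in arity $4$, not from rotating the associator as your Step~2 suggests.
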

This gives a cyclic operadic characterization of $ \GRT_{\kk} $, analogous to the corresponding result for $ \GT_{\kk}$ by Robertson and the author in \cite{RS2025cyclicgt}. A closely related result of Willwacher \cite{willwacher2024cyclic} characterizes $\GRT_{\kk}$ in terms of homotopy automorphisms of Batalin-Vilkovisky cooperad seen as a cyclic dg Hopf cooperad. Theorem~\ref{thmA} should be seen as a strict $0$-truncation of Willwacher's result \cite[Corollary 1.5]{willwacher2024cyclic}. We work at the level of discrete automorphisms rather than homotopy automorphisms, giving a direct combinatorial proof that depends only on the defining equations of $\GRT_{\kk}$
and the cyclic structure of $\PaRCD$-- without passing through the homotopy theory of cooperads.

A key point is that the proof is genuinely different from the braid-side argument. In the case of $ \GT_{\kk} $, one can exploit the presentation of the operad $ \PaRB $ to verify the cyclic compatibility. For $ \PaCD $, no comparable finite presentation is known, so that strategy is unavailable, see, e.g. \cite[Remark 2.24]{calaque2024associatorsoperadicpointview} or \cite[Section 10.2]{FresseBook1}. Instead, we work directly with the defining pentagon and hexagon equations of $ \GRT_{\kk} $, together with the cyclic structure that comes from the spherical presentation of the framed Drinfeld--Kohno Lie algebras. This relies on Furusho's equivalent 5-cycle description of the pentagon equation of $\GRT_{\kk}$ from \cite{Furusho_pentagon}. In this way, the cyclic characterization of $ \GRT_{\kk} $ is obtained directly, without passing through associators. 

To place this theorem in context, we note that Drinfeld associators are in bijection with operad isomorphisms $ \hPaRB \to \hPaRCD $. The operad of parenthesized ribbon braids $\PaRB$ carries a cyclic structure inherited from the framed little disks operad (cf. \cite{campos2019configuration}), while the framed infinitesimal analog $\PaRCD$ acquires a cyclic structure from the framed Drinfeld--Kohno Lie algebras (cf.\cite{severa2009formality}, \cite[Section 5]{willwacher2024cyclic}). Proposition~\ref{prop: bijection of cyclic associators} shows that the classical operadic identification of associators lifts automatically to this cyclic framed setting: every associator determines a unique object-fixing cyclic operad isomorphism
\[
\hPaRB^{\cyc}\longrightarrow \hPaRCD^{\cyc}.
\]
This is a lifting of \cite[Lemma 3.2]{campos2019configuration} to the parenthesized setting. As a consequence, the bitorsor triple $( \GT_{\kk}, \Assoc_{\kk}, \GRT_{\kk} )$ admits a cyclic operadic refinement (Proposition~\ref{prop: bitorsor triple}) and we have a bitorsor isomorphism 
\[(\GT_{\kk},\Assoc_{\kk},\GRT_{\kk}) \leftrightarrow (\Aut^{+}(\hPaRB^{\cyc}), \text{Iso}^{+}(\hPaRB^{\cyc}, \hPaRCD^{\cyc}), \Aut^{+}(\hPaRCD^{\cyc})).\] In forthcoming work, Naef and Ševera explain how this cyclic operadic framework applies to the classical KZ associator: the framed KZ associator, obtained from the holonomy of the KZ connection along a path from $0$ to $1$, can be realized as a morphism of cyclic operads \cite{Drin,Severa26_cyclicKZ}.

This lift should be viewed as complementary to the main theorem rather than as its proof: it shows that the familiar associator correspondence is compatible with cyclic framing, while the identification
\[
\GRT_{\kk}\cong \Aut_{\Cyc}^{+}(\hPaRCD^{\cyc})
\]
requires a separate direct argument. Together, these results show that the cyclic framed operads $ \PaRB^{\cyc} $ and $ \PaRCD^{\cyc} $ provide natural homes for the two Grothendieck--Teichmüller groups and the associator torsor connecting them. 

The cyclic viewpoint also has a concrete application to chord diagram categories with duality. Hinich and Vaintrob \cite{hv_cyclic} showed that a cyclic operad satisfying the appropriate invariance condition gives rise, via the metric prop construction, to a tensor category in which duality morphisms are formally adjoined. Applying this construction to $ \PaRCD^{\cyc}_{\kk} $, we recover the category $\mathbf{A}'(\kk)$ of parenthesized chord diagrams with self-dual objects (Lemma~\ref{lemma: metric prop of PaRCD}). This category extends the classical chord diagram category $ \mathbf{A}(\kk) $, familiar from finite-type invariants and Kontsevich's universal knot invariant (see e.g. Bar-Natan~\cite{bar-natan_fundamental_1997,bar-natan_wheels_1997} and Le Murakami \cite{Le_Murakami_1996}), by imposing the additional strict self-duality relation $ (+)^* = + $. Our second main result (Theorem~\ref{prop: GRT action on metric prop}) states that the cyclic operadic description of $ \GRT_{\kk} $ therefore induces an action on this category of chord diagrams with self-dual objects:

\begin{Th}
The $\GRT_{\kk}$-action on the cyclic operad $\PaRCD^{\cyc}$ extends to the category $\bA'(\kk)$.
\end{Th}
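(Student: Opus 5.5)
The plan is to get the statement from Theorem~\ref{thmA} together with the functoriality of the Hinich--Vaintrob metric prop construction. By Lemma~\ref{lemma: metric prop of PaRCD}, $\bA'(\kk)$ is the metric prop (the tensor category with formally adjoined duality morphisms) attached to the cyclic operad $\hPaRCD^{\cyc}$. The task then reduces to two points: the construction $\calP \mapsto \mathrm{Met}(\calP)$ is functorial for isomorphisms of cyclic operads, and object-fixing automorphisms go to strict tensor automorphisms that fix objects and duality data.

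\textbf{Step 1: recall the construction.} I will write $\bA'(\kk)$ in the Hinich--Vaintrob form. Morphisms $m\to n$ are generated by operations of $\hPaRCD^{\cyc}$, viewed with all legs as inputs via the cyclic structure, glued along connected graphs. They are subject to the relations from operadic composition and the $\Sigma_{n+1}$-actions, and duality is implemented by the unit and counit, i.e. the identity $1$-ary operation read as a $2$-ary cyclic element. The invariance condition needed for the construction is exactly what Lemma~\ref{lemma: metric prop of PaRCD} verifies, and it depends only on the cyclic structure.

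\textbf{Step 2: transport the action.} By Theorem~\ref{thmA}, each $g\in\GRT_{\kk}$ gives an object-fixing automorphism $\phi_g$ of $\hPaRCD^{\cyc}$. It commutes with the $\Sigma_{n+1}$-actions and with cyclic compositions $\circ_{i}^{j}$, and it is continuous for the prounipotent filtration. I will define $\Phi_g$ on generators of the metric prop by applying $\phi_g$ to each vertex decoration, and by the identity on objects and on the duality morphisms $ev, coev$ for the self-dual object $+$. The relations defining the metric prop are generated by cyclic compositions, symmetric actions and the identification of the $2$-ary element with the identity. Since $\phi_g$ preserves all of these, $\Phi_g$ descends to a well-defined strict symmetric monoidal functor. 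Functoriality $\Phi_{gh}=\Phi_g\Phi_h$ and $\Phi_e=\id$ follow from the corresponding identities for $\phi_g$, so this gives a group action. Completing morphism spaces is harmless because $\phi_g$ preserves the filtration degree, so $\Phi_g$ extends continuously.

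\textbf{Step 3: compatibility, the main obstacle.} The delicate point is that the duality morphisms must be fixed: $\phi_g$ has to send the $2$-ary cyclic element (the identity, up to the cyclic rotation) to itself. This is where object-fixing and the cyclic compatibility (the $\tau$-equivariance of $\phi_g$) are used. I will check directly that the image of $g=(\lambda,f)$ acts on arity-one morphisms trivially, since $\PaRCD(1)$ contains no nontrivial chord-diagram terms beyond framing. I will also check that the framing chord $t_{11}$ is rescaled by $\lambda$ in a way consistent with $(+)^*=+$, i.e. the cyclic rotation of the framing on the unique strand agrees with its image. Once this is verified, the $\GRT_{\kk}$-action on $\bA'(\kk)$ restricts on the operadic part to the given action on $\hPaRCD^{\cyc}$, and so extends it.
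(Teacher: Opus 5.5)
\textbf{The gap is in keeping the duality morphisms fixed.}

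In $\bA'(\kk)$ the pairing is tied to the associativity constraint. The paper records this as the zig-zag (unknot) relation $(\id\otimes b)A'(d\otimes\id)=\omega^{-1}$, where $\omega$ is the Kontsevich integral of the unknot. Your $\Phi_g$ leaves $b$ and $d$ alone but sends $A$ to $\Phi(t_{12},t_{23})\cdot A$. Preserving that relation would therefore require $(\id\otimes b)\Phi(t_{12},t_{23})A(d\otimes\id)=\omega^{-1}$.

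Compared with the original relation, the zig-zag now also closes $\Phi$ against the pairing. This produces the element $(b\otimes\id)\Phi(t_{12},t_{23})(\id\otimes d)$ of $\End(+)$, i.e. chord diagrams on one strand. You give no argument that this closure is trivial. Cyclic compatibility of $\phi_g$ does not supply one: the invariance relation of the metric prop only rotates a single leg of an operation through the pairing. It does not evaluate the zig-zag of the associativity constraint.

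This is exactly the point the paper's proof addresses, and it does \emph{not} fix $d$:
\begin{itemize}
\item It sets $G'(b)=b$ and $G'(d)=(\id\otimes\nu)d$, with $\nu=\bigl((b\otimes\id)\Phi(t_{12},t_{23})(\id\otimes d)\bigr)^{-1}$.
\item This correction cancels the closure of $\Phi$, so the left-hand side returns to $(\id\otimes b)A(d\otimes\id)=\omega^{-1}$.
\item It then uses Furusho's theorem that the $\GT_{\kk}$-action on the unknot is trivial, to conclude that $\omega^{-1}$ itself is fixed.
\end{itemize}
To keep your version, you would have to prove that this closure is the identity for every $\Phi\in\GRT_1$. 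In that case $\nu=1$ and the two extensions coincide. As written, your claim that $\Phi_g$ descends to a well-defined functor on $\bA'(\kk)$ is unsupported.

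\textbf{Your Step 3 aims at the wrong obstacle.}

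Any operad morphism preserves the unit. So the $2$-ary cyclic element, and with it the Hinich--Vaintrob pairing, is fixed automatically; nothing needs checking there.

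Moreover, your assertion that $g$ acts trivially on arity-one morphisms is false: $g(I_1)=\lambda I_1$, i.e. $t_{11}\mapsto\lambda t_{11}$. This is harmless, because $z_2^*$ acts trivially in arity one. But it contradicts your own next sentence, and it does not touch the real compatibility issue between $\Phi\cdot A$ and the pairing $(b,d)$.
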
 

In this sense, the cyclic structure on $ \PaRCD $ provides the bridge from the operadic description of $ \GRT_{\kk} $ to a natural category of chord-diagrams with duals. The result may be viewed as the chord-diagram shadow of the $\GT_{\kk}$-action on tangles from \cite{RS2025cyclicgt} under the Kontsevich isomorphism . It is also closely related to Furusho's construction \cite{furusho_galois_2020} of a $ \GRT_{\kk} $-action on chord diagrams via the ABC-construction, although the precise relationship between these two actions remains to be understood (Remark~\ref{remark: two GRT actions}).

\subsection*{Organization of the paper} In Section~1 we introduce the cyclic operads $ \RCD^{\cyc} $ and $ \PaRCD^{\cyc} $. In Section~2 we prove that every Drinfeld associator lifts uniquely to a cyclic operad isomorphism $ \hPaRB^{\cyc}\to \hPaRCD^{\cyc} $. Section~3 contains the direct proof of the cyclic operadic characterization of $ \GRT_{\kk} $. Finally, in Section~4 we recall infinitesimal symmetric monoidal categories, identify $ \mathbf{A}'(\kk) $ with the metric prop generated by $ \PaRCD^{\cyc}_{\kk} $, and deduce the induced $ \GRT_{\kk} $-action on this category.

\subsection*{Conventions} Throughout we fix a field $\kk$ of characteristic zero. All morphisms are drawn from bottom to top.

\subsection*{Acknowledgements} 
We thank Hidekazu Furusho and Marcy Robertson for comments on an earlier draft and Zsuzsanna Dancso for useful discussions. We acknowledge the support of the Australian Government Research Training Program (RTP) Scholarship, the Australian Research Council Future Fellowship FT210100256, and the Dr Albert Shimmins Fund.

\section{A cyclic operad of chord diagrams }\label{sec: cyclic chord diagrams}

\subsection{Cyclic Operads} 

Let $\bE=(\bE,\otimes,\mathbb{1})$ be a symmetric monoidal category whose tensor product commutes with colimits. Let $\Sigma_n^+ = \Aut(\{0,1,\ldots,n\})$ denote the symmetric group on $n+1$ letters. We identify $\Sigma_n$ with the subgroup of $\Sigma_n^+$ consisting of permutations that fix $0$. We write $z_{n+1}$ for the cyclic permutation
\[
z_{n+1}(i) = i+1 \pmod{n+1}.
\]

A \emph{symmetric sequence} in $\bE$ is a sequence $\{\calO(n)\}_{n\geq 0}$, where each $\calO(n)$ is equipped with a right $\Sigma_n$-action. An \emph{operad} $\calO$ in $\bE$ is a symmetric sequence $\calO=\{\calO(n)\}_{n\geq 0}$ together with a distinguished operation $1\in \calO(1)$, called the unit, and a family of equivariant, associative, and unital partial compositions
\[
\circ_i : \calO(n)\times \calO(m)\to \calO(n+m-1),
\]
where $1\leq i\leq n$.

A morphism of operads $f:\calP\lrar\calQ$ is a morphism of the underlying symmetric sequences that commutes with the operad structure. An $\calO$-algebra is an operad morphism $\rho:\calO\to \End(A)$, where
\[
\End(A)=\{\End(A)(n)\}_{n\geq 0}=\{\Hom(A^{\otimes n},A)\}_{n\geq 0}
\]
is the endomorphism operad, see, e.g. \cite{gk_cyc}. Equivalently, an $\calO$-algebra consists of an object $A\in\bE$ together with a family of $\Sigma_n$-equivariant morphisms $\rho_n:\calO(n)\otimes A^{\otimes n}\rightarrow A$. 



\begin{definition}\label{def: cyclic operad}
A \emph{cyclic operad} is an operad $\calO=\{\calO(n)\}_{n\geq 1}$ equipped with action maps \[\begin{tikzcd}\calO(n)\times\Sigma_{n}^{+}\arrow[r, ""]& \calO(n), \end{tikzcd}\] extending the $\Sigma_n$-action, $n\geq 1$, such that for every $x\in\calO(n)$ and $y\in\calO(m)$, the operadic composition is compatible with the cyclic action as follows: 
\begin{equation} ~\label{cylic_formula}
      z_{n+m}^*(x\circ_i y)  =  \begin{cases} z_{n+1}^*(x)\ \circ_{i-1}\ y \ \ \text{if} \ \ 2\leq i\leq n \\
      z_{m+1}^*(y) \ \circ_m \ z_{n+1}^*(x) \ \ \text{if} \ \ i=1 \ \text{and} \ n\not=0. 
      \end{cases}
\end{equation}   
\end{definition}
A cyclic operad morphism $f: \calP \lrar \calQ$ is a morphism of the underlying operad that commutes with the cyclic structure. 

An $\calO^{\cyc}$-algebra in $\bE$ is an $\calO$-algebra $A$ equipped with a nondegenerate symmetric form
\[
d:A\otimes A\to \mathbb{1},
\]
such that for every $n\geq 1$, the morphism
\[
\calO(n)\otimes A^{\otimes (n+1)}
\xrightarrow{\rho_n\otimes \id_A}
A\otimes A
\xrightarrow{d}
\mathbb{1}
\]
is $\Sigma_n^+$-equivariant.

\subsection{Cyclic Structure on Ribbon Chord Diagrams}\label{subsec: Ribbon Chord Diagram}
For a group \(G\), its prounipotent completion is controlled by an associated Malcev Lie algebra, equivalently by the Lie algebra of primitive elements in the completed group Hopf algebra; see, for example, \cite[Section 8.3]{FresseBook1}. A basic example is provided by the pure braid group: the Drinfeld--Kohno Lie algebra $\mathfrak{t}_n$ is the Lie algebra associated with the prounipotent completion of $\PB_n$, and Kohno's isomorphism gives $(\PB_n)_{\kk} \cong \exp(\mathfrak{t}_n).$  The framed analog is obtained from the pure ribbon braid group $\PRB_n$: its associated Lie algebra is the framed Drinfeld--Kohno Lie algebra $\mathfrak{ft}_n$, and one has
\[
(\PRB_n)_{\kk} \cong \exp(\mathfrak{ft}_n).
\]
\begin{definition}\label{def:DK Lie algebra}
The \emph{framed Drinfeld-Kohno} Lie algebra $\mathfrak{ft}_n$ is a degree-completed free Lie algebra generated by symbols $\{t_{ij} = t_{ji}, 1\leq i,  j\leq n\}$, with relations 
\begin{equation}\label{equ:drinfeldkohno_rel}
	\begin{aligned}
	[t_{ij}, t_{kl}] &= 0 \quad \text{for $\{i,j\}\cap\{k,l\}=\emptyset$}, \\
	[t_{ij},t_{ki}+t_{kj}] &= 0 \quad \text{for distinct $i,j,k$}, \\
        [t_{ii},t_{jk}] &= 0 \quad \text{for any $i,j,k$}. \\ 
	\end{aligned}
\end{equation}
\end{definition}

There is a short exact sequence of Lie algebras $0 \lrar \kk^{n} \lrar \fft_{n} \lrar \ft_{n} \lrar 0$, where $\kk$ is the ground field, the map $\pi: \fft_{n} \lrar \ft_{n}$ given by $\pi(t_{ij}) = t_{ij}$, for $i \neq j$, and $\pi(t_{ii}) = 0$, for all $i$. The kernel of $\pi$ is essentially the Lie algebra generated by $t_{11}, t_{22}, \dots. t_{nn}$, which is isomorphic to $\kk^{n}$. This gives the  decomposition $ \fft_{n} = \bigoplus_{i=1}^{n} \kk t_{ii} \oplus \ft_{n}$. 

The Lie algebras $\mathfrak{ft}=\{\mathfrak{ft}_n\}_{n\geq 0}$ form an operad in completed Lie algebras (cf. \cite[Section 1.3]{severa2009formality}) in which the operadic composition \[\circ_k : \mathfrak{ft}_m \oplus \mathfrak{ft}_n \lrar \mathfrak{ft}_{m+n-1}\] is defined by: 
\begin{center}
    $0 \circ_k t_{ij} \mapsto t_{i+k-1 j+k-1} \quad \text{for all } k$,
\end{center}
\begin{equation*}
\begin{matrix} 
t_{ij} \circ_k 0 &\mapsto
\begin{cases}
t_{i+n-1 j+n-1 } & \text{if } k<i<j, \\
\sum_{p = i}^{i+n-1} t_{pj+n-1} & \text{if } k =i <j,\\
t_{i j+n-1} & \text{if } i < k <j,\\ 
\sum_{q = j}^{j+n-1} t_{iq} & \text{if } i < k =j,\\ 
t_{ij} & \text{if } i < j < k.
\end{cases} \quad \text{and} \quad
t_{ii} \circ_k 0 &\mapsto 
\begin{cases}
    t_{i+n-1 i+n-1 } & \text{if } k< i,\\ 
    \sum_{p = i}^{i+n-1} t_{pp} + \sum_{\{p,q\}\subset m} t_{pq} & \text{if } k =i,\\
    t_{ii} & \text{if } i < k.\\ 
\end{cases}.
\end{matrix} 
\end{equation*}

The Lie algebras \(\mathfrak{ft}_n\) assemble into a cyclic operad in pronilpotent Lie algebras; see \cite[Section~5.1]{willwacher2024cyclic}. Applying the completed universal enveloping algebra construction levelwise yields a functor \[\begin{tikzcd} \Op(\mathfrak{lie})\arrow[r, "\hat{U}(-)"] & \Op(\operatorname{Hopf})\end{tikzcd}\] from cyclic operads in pronilpotent Lie algebras to cyclic operads in complete Hopf algebras.

For each $n\geq 1$, the pronilpotent Lie algebra $\mathfrak{ft}_n$ has a completed universal enveloping algebra $\hat U(\mathfrak{ft}_n)$, which is a complete Hopf algebra. Elements of $\hat U(\mathfrak{ft}_n)$ may be viewed as formal linear combinations of polynomials in the generators $t_{ij}$, $1\leq i,j\leq n$. The exponential and logarithm define inverse bijections between the primitive and group-like elements of $\hat U(\mathfrak{ft}_n)$; see \cite[Proposition 8.1.5]{FresseBook1}. In particular, the exponential identifies $\mathfrak{ft}_n$ with the group $G(\hat U(\mathfrak{ft}_n))$ of group-like elements of $\hat U(\mathfrak{ft}_n)$.

\begin{definition}
For each $n\geq 1$, let $\hRCD(n)$ denote the groupoid with a single object and
\[
\Hom_{\hRCD(n)}(*,*)=\exp(\mathfrak{ft}_n)=G(\hat U(\mathfrak{ft}_n)).
\]
\end{definition}

The symmetric group $\Sigma_n$ acts on $\mathfrak{ft}_n$ by permuting the indices of the generators $t_{ij}$, and hence acts on $\hat U(\mathfrak{ft}_n)$ and on $\exp(\mathfrak{ft}_n)$. Explicitly, for $\sigma\in \Sigma_n$, we set
\[
\sigma^*(t_{ij}t_{lk}) = t_{\sigma(i)\sigma(j)}\, t_{\sigma(l)\sigma(k)}.
\]
The operadic partial compositions are induced from those on the Lie operad $\{\mathfrak{ft}_n\}_{n\geq 1}$: for $x\in \mathfrak{ft}_n$ and $y\in \mathfrak{ft}_m$, we define
\[
e^x\circ_i e^y := e^{x\circ_i y}.
\]
In this way, the collection $\hRCD=\{\hRCD(n)\}_{n\geq 1}$ forms an operad in complete Hopf algebras.


Following \cite[Section 3.1]{campos2019configuration}, the cyclic structure on $\hRCD$ can be completely determined by defining the following action of the $(01)$-transposition on $\hat{U}(\mathfrak{ft}_n)$: 
\begin{equation}\label{cyclic structure on RCD}
(01)^*(t_{ij})=\begin{cases} t_{ij} \ \text{if} \ i\not=1 \ \text{and} \ j\not=1;\\ -\sum_{k=1}^n t_{kj} \ \text{if} \ i=1 \ \text{and} \ j\not=1;\\
\sum_{k,l=1}^n t_{kl} \ \text{if} \ i=1 \ \text{and} \ j=1.\end{cases}
\end{equation}

\begin{remark}\label{remark: cyc framed DK}

The Lie algebra $\mathfrak{ft}_n$ admits a cyclic presentation by formally adding generators $t_{00}$, $t_{0i}=t_{i0}$, $i=1,\dots,n$ and rewriting the relations in a cyclically invariant form: 
\begin{equation*}
    t_{ij} = t_{ji}, \qquad [t_{ij}, t_{kl}] = 0 \quad \text{for $\{i,j\}\cap\{k,l\}=\emptyset$, \quad and } 
\end{equation*}
\begin{equation}\label{eq:residue relation}
    \sum_{i=0}^n t_{ij} = 0\, \quad \text{for each $j$} 
\end{equation}
One can check that this is equivalent to the cyclic structure described in \eqref{cyclic structure on RCD} by eliminating the generators $t_{ii}$ for $i=0,\dots, n$ using \eqref{eq:residue relation}, see~\cite[Section 5.1]{willwacher2024cyclic}. This cyclic presentation corresponds to the graded Lie algebra grLie($\PB_{n+1}(\mathbb{S^2})$) associated with the pure ribbon braid group on the sphere. Moreover, $\PB_{n+1}(\mathbb{S^2}) \cong \PB_{n+1}(\mathbb{C})/(\beta_1\beta_2 \cdots \beta^2_{n-1} \cdots \beta_2 \beta_1)$ , where $\beta_i$ are Artin braid generators. The quotient relation translates to the spherical residue condition \eqref{eq:residue relation}. 

\end{remark}

The magma operad $\magma(n)$ is a free symmetric operad in the category of sets generated by a binary operation $\mu(x_1,x_2) \in \Omega(2)$, and here $\Sigma_2$ acts freely on $\mu_2$. Elements of $\magma(n)$ are maximally parenthesized permutations of $n$ elements in $\Sigma_n$; for example, $((13)(24))5 \in \magma(5)$. The operadic composition is given by the substitution of letters such as $(1(23)) \circ_3 (21) = (1(2(43)))$.

\begin{definition}\label{def: PaRCD}
For each $n\geq 1$, we define a sequence of prounipotent groupoids $\hPaRCD(n)$ with: 
\begin{itemize}
    \item objects $\ob(\hPaRCD(n))=\magma(n)$; 
    \item morphisms defined by $\Hom_{\hPaRCD(n)}(p,q) = \exp(\mathfrak{ft}_n)$.
\end{itemize} 
The categorical composition in $\hPaRCD(n)$ is given by the multiplication in $\exp(\mathfrak{ft}_n)$.
\end{definition}

The collection $\hPaRCD=\{\hPaRCD(n)\}_{n\geq 1}$ forms an operad in prounipotent groupoids. On objects, the operadic composition is induced by that of $\magma$. On morphisms, it is given by the operad structure on $\RCD$. 

Every morphism in $\hPaRCD$ can be obtained from the following generating morphisms by categorical and operadic composition of the following generating morphisms (see Figure~\ref{fig:Generators PaRCD}):
\[
X_{1,2} \in \Hom_{\PaRCD(2)}((12),(21)), \qquad
H_{1,2} \in \Hom_{\PaRCD(2)}((12),(12)),
\]
\[
I_1 \in \Hom_{\PaRCD(1)}(1,1), \qquad \text{and} \quad
A_{1,2,3} \in \Hom_{\PaRCD(3)}((12)3,1(23)).
\]
This is analogous to Step~1 of \cite[Theorem 10.3.4]{FresseBook1}.

\begin{figure}[ht]
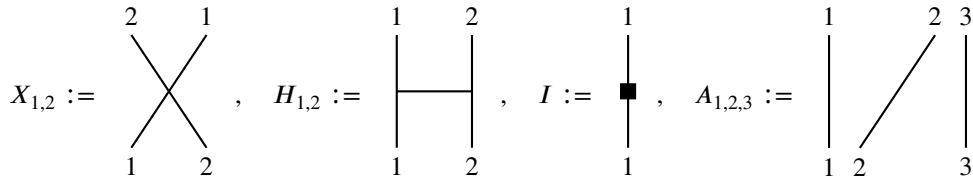

    \centering
   $ X_{1,2} := \begin{array}{cc}
          \tikz{\draw[thick] (0,0.25) -- (1,1.75); \node at (0,0) {1};\node at (0,2) {2}; \draw[thick] (1,0.25) -- (0,1.75); \node at (1,0) {2}; \node at (1,2) {1}} \end{array}, \quad 
    H_{1,2} :=\begin{array}{cc}
          \tikz{\draw[thick] (0,0.25) -- (0,1.75); \draw[thick] (0,1) -- (1,1); \node at (0,0) {1};\node at (0,2) {1}; \draw[thick] (1,0.25) -- (1,1.75); \node at (1,0) {2}; \node at (1,2) {2}}\end{array}, 
          \quad 
    I :=\begin{array}{cc}
          \tikz{\draw[thick] (0,0.25) -- (0,1.75); \node at (0,0) {1};\node at (0,2) {1}; \draw[fill] (-0.1,0.9) rectangle (0.1,1.1);} \end{array}, 
          \quad 
    A_{1,2,3} : = \begin{array}{c} \tikz{\draw[thick] (0,0.25) -- (0,1.75); \node at (0,0) {1}; \node at (0,2) {1}} \tikz{ \draw[thick] (3,-0) -- (2,-1.50); \node at (2,-2+.25) {2}; \node at (3,-0+.25) {2};} \tikz{\draw[thick] (2,0.25) -- (2,1.75); \node at (2,0) {3}; \node at (2,2) {3}} \end{array}$
    \caption{Generating morphisms of the operad $\PaRCD$.}\label{fig:Generators PaRCD}
\end{figure} 

For any two operads $\calP$ and $\calQ$ in groupoids, a morphism of operads $\calP \lrar \calQ$ is called an \textit{ equivalence}
if there is an equivalence of categories $\calP(n) \lrar \calQ(n)$ in each arity. The operad $\PaRCD$ can equivalently be defined as an operadic pullback along the map $w: \magma \lrar \ob \RCD$, that is, $w^*\RCD = \PaRCD$, where $\Hom_{\PaRCD(n)}(p,q) = \Hom_{\RCD(n)}(w(p),w(q))$ for all $p,q \in \magma(n)$ (see \cite[Section 6.1.5]{FresseBook1} for a general pullback construction of operads in groupoids). 

\begin{prop}\label{prop: categorical equivalences RCD and PaRCD}
There is an equivalence between the operads $\hRCD$ and $\hPaRCD$.
\end{prop}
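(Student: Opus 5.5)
The plan is to exhibit an explicit operad morphism relating the two operads and check that it is an equivalence of groupoids in each arity. Since $\hPaRCD$ was defined as the pullback $w^*\hRCD$ along $w:\magma\to\ob\hRCD$, the natural choice is the canonical projection
\[
\Phi=(w,\id):\hPaRCD\lrar\hRCD .
\]
On objects it sends every parenthesized permutation $p\in\magma(n)$ to the unique object $*$ of $\hRCD(n)$. On morphisms it is the identity
\[
\Hom_{\hPaRCD(n)}(p,q)=\exp(\mathfrak{ft}_n)\lrar \Hom_{\hRCD(n)}(*,*)=\exp(\mathfrak{ft}_n).
\]

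First I would check that $\Phi$ is a morphism of operads in groupoids. Functoriality in each arity is immediate, because categorical composition on both sides is multiplication in $\exp(\mathfrak{ft}_n)$ and identities go to $1$. Compatibility with the $\Sigma_n$-actions holds because $\Sigma_n$ acts on morphisms of $\hPaRCD(n)$ by the same permutation of indices of the $t_{ij}$ as in $\hRCD(n)$, and it acts on objects through $\magma$, which $w$ forgets. The partial compositions $\circ_i$ of $\hPaRCD$ are defined on morphisms by $e^x\circ_i e^y=e^{x\circ_i y}$, exactly as in $\hRCD$, and on objects by substitution in $\magma$, which again maps to $*\circ_i *=*$. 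The unit $1\in\magma(1)$ goes to the identity of $\hRCD(1)$. All of this is the general statement that the pullback of an operad in groupoids along a map into its object operad comes with a canonical operad morphism, as in \cite[Section 6.1.5]{FresseBook1}, so I would simply cite that.

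Second I would show that each $\Phi_n:\hPaRCD(n)\to\hRCD(n)$ is an equivalence of categories. It is fully faithful by construction, since it is the identity on each hom-set. It is essentially surjective, indeed surjective on objects, provided $\magma(n)\neq\emptyset$, which holds for every $n\geq1$. So by the standard characterization (fully faithful plus essentially surjective), each $\Phi_n$ is an equivalence. If an explicit quasi-inverse is wanted, fix for each $n$ an object $p_n\in\magma(n)$, for instance the left-bracketed identity permutation. Send $*\mapsto p_n$ and each morphism to itself. The natural isomorphism $\id\Rightarrow s_n\circ\Phi_n$ has component at $p$ given by the identity element $1\in\exp(\mathfrak{ft}_n)=\Hom(p,p_n)$.

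There is no substantial obstacle. The only point to watch is that the quasi-inverse $s_n$ is not itself an operad morphism, since operadic composition does not preserve the chosen objects $p_n$. That is why I would state the equivalence through the morphism $\Phi$ in the direction $\hPaRCD\to\hRCD$, matching the notion of equivalence defined just before the proposition.
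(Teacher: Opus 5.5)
Your proposal is correct and follows the paper's own argument. You take the canonical projection from the pullback $w^*\hRCD$ to $\hRCD$, note that it is the identity on hom-sets (hence fully faithful), and note that it is surjective on objects because $\magma(n)\neq\emptyset$. Your checks that this projection is an operad morphism, and your remark that the quasi-inverse is not operadic, only make explicit details the paper leaves implicit.
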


\begin{proof}
We need to show that there is an equivalence of prounipotent groupoids $\hPaRCD(n) \overset{\sim}{\longrightarrow} \hRCD(n)$ for each $n$. We notice that the pullback along the map $w: \magma(n) \lrar \ast$ sends every rooted tree in $\magma$ to $\ast$, induces the following identification
\begin{equation} \label{eq: fully faithful}
    \Hom_{\PaRCD(n)}(p,q) = \Hom_{\RCD(n)}(w(p),w(q)) = \Hom_{\RCD(n)}(\ast,\ast) = \exp(\mathfrak{ft}_{n}),
\end{equation}
for any $p,q \in \magma(n)$. The map $w: \magma(n) \lrar \ast$ is clearly surjective by definition and is fully faithful by \eqref{eq: fully faithful}.
\end{proof}

The cyclic structure on $\PaRCD$, which is induced by the cyclic structure on $\RCD$. We describe the action $z^*$ on the generating morphisms $H_{1,2},I, X_{1,2}$ and $A_{1,2,3}$ of $\PaRCD$ as follows: \[ z_2^*(I) = I \in \Hom_{\hPaRCD(1)}(1,1), \quad z_3^*(H_{1,2})= -H_{1,2}-I_2 \in \Hom_{\hPaRCD(2)}(12,12),\] \[ z_3^*(X_{1,2})= X_{1,2} \in \Hom_{\hPaRCD(2)}(12,12) \quad \text{and} \quad z_4^*(A_{1,2,3}) = A^{-1}_{2,3,1}\in \Hom_{\hPaRCD(3)}((12)3,1(23)).\]


\begin{remark}
We note that the morphism $A_{1,2,3}$ is not present in $\RCD$. The operad $\PaRCD$ is a non-strict refinement of $\RCD$, $A_{1,2,3}$ only appears in $\PaRCD$. The action $z_4^*(A_{1,2,3}) = A^{-1}_{2,3,1}$ comes from the action on the associator $\alpha_{1,2,3}$ of $\PaRB$ from \cite[Section 3]{campos2019configuration}. Since $\alpha_{1,2,3}$ extends the $\Sigma_3$-action to the $\Sigma_4$-action, so does $A_{1,2,3}$, similar to \cite[Lemma 4.7]{RS2025cyclicgt}. 
\end{remark}
\section{Drinfeld Associators}\label{sec:Associators}

Let $\ff_2$ be the degree-completed free Lie algebra on two variables $x$ and $y$, and let $\hat{U}(\ff_2)= \kk\langle\langle t_{12},t_{23}\rangle\rangle$ be the corresponding non-commutative formal power series ring in two variables $x$ and $y$.  The enveloping algebra, $\hat{U}(\ff_2)$, admits a natural Hopf algebra structure, and the group-like elements of $\hat{U}(\ff_2)$ are of the form $\exp(-)$; that is, an element $s \in \hat{U}(\ff_2)$ is group-like if and only if it admits a form $s = \exp(x)$ for some $x \in \ff_2$. In \cite{Drin}, Drinfeld  defines an associator to be a particular group-like element of $\kk\langle\langle t_{12},t_{23}\rangle\rangle$ that satisfies some equations as follows.

\begin{definition}\label{def: Drin Ass}
Let $\kk$ be a field that contains $\mathbb{Q}$. A \emph{Drinfeld associator} is a pair $(\lambda,\Phi)\in\kk^{\times}\times \exp(\ff_2)$ that satisfies the following equations. 
\begin{equation}\tag{I}
\Phi(x_1,x_2)\Phi(x_2,x_1)=1,
\end{equation}
\begin{equation}\label{pentagon}\tag{P}
\Phi(t_{12}, t_{23})\Phi(t_{12}+ t_{13}, t_{24}+t_{34})\Phi(t_{23}, t_{34})=\Phi(t_{13} + t_{23}, t_{34})\Phi(t_{12}, t_{23}+ t_{24}) \quad \text{in} \quad \exp({\mathfrak{t}_4}),  
\end{equation}
\begin{equation}\label{hexagon}\tag{H}
\Phi(t_{12}, t_{23})  e^{\lambda t_{23}/2} \Phi(t_{23}, t_{31})  e^{\lambda t_{31}/2}  \Phi(t_{13}, t_{12}) e^{\lambda t_{12}/2} = 1 \qquad \text{in} \quad \exp({\mathfrak{t}_3 }) \quad \text{where} \quad t_{12}+ t_{23}+ t_{13} = 0;
\end{equation} in the complete associative algebra $\kk\langle\langle t_{12},t_{23}\rangle\rangle$.
\end{definition}

We write $\assoc_{\kk}$ for the set of Drinfeld associators.

\begin{remark}
Drinfeld showed that such associators exist with rational coefficients; however, associators with coefficients in $\mathbb{Z}$ do not exist; see \cite[Section 2, Section 5]{Drin}. There are two known explicit examples of associators: one constructed by Drinfeld is $\Phi_{\mathbf{KZ}}$ over $\mathbb{C}$ using the monodromy of the Knizhnik-Zamolodchikov equations, and the other $\Phi_{\mathbf{AT}}$ was constructed by Alekseev-Torossian in \cite{AT_deformations_flat_connections} using the integration theory of singular differential forms on semialgebraic chains related to the solutions to the Kashiwara-Vergne problem.
\end{remark}

We briefly recall the operad of parenthesized ribbon braids, denoted by $\PaRB$. $\PaRB = \{\PaRB(n)\}_{n \geq 1}$ is an operad in groupoids. For all $n$, the object set of $\PaRB(n)$ is given by $\magma(n)$. For any $p_1$ and $p_2$ in $\magma(n)$, the morphisms $$\Hom_{\PaRB(n)}(p_1,p_2) := \Hom_{\CoRB(n)}(u(p_1),u(p_2))$$ are elements of the ribbon braid group $\RB_n$ whose underlying permutation is $u(p_2)u(p_1)^{-1}$. Here $u : \magma(n) \lrar \Sigma_n$ is the forgetful map that forgets the parenthesization. The operadic composition \[\circ_i: \PaRB(n) \times \PaRB(m) \lrar \PaRB(n+m -1)\] is defined by replacing the $i^{\text{th}}$ ribbon in $\PaRB(n)$ with the ribbons in $\PaRB(m)$. One can similarly define the operad of parenthesized braids $\PaB$ by simply replacing the ribbon braids in $\PaRB$ with braids. Moreover, these two operads are related by the semidirect product $\PaRB(n) \cong \PaB(n) \rtimes \mathbb{Z}^{n}$. Any morphism of $\PaRB$ admits a (non-unique) decomposition in terms of categorical and operadic compositions of braiding $\beta_{1,2}: \mu \lrar (12)^* \mu$, associativity $\alpha_{1,2,3}: \mu \circ_1 \mu \lrar \mu \circ_2 
\mu$, and twist $\tau_{}$, a single strand with full twist. We refer to \cite[Section~6]{Boavida-Horel-Robertson} for more details. By applying the prounipotent completion functor $(-)_{\kk}$ to the operad $\PaRB$, we get the operad $\PaRB_{\kk}$ in prounipotent groupoids. The operad $\PaRB$ admits a cyclic structure \cite{campos2019configuration, RS2025cyclicgt} given by 
\[z^*(\beta_{1,2}) = \beta_{1,2} \cdot \tau, \qquad z^* (\alpha_{1,2,3}) = \alpha^{-1}_{2,3,1}, \qquad z^*(\tau) = \tau.\]


It is well known that Drinfeld associators determine object-fixing operad isomorphisms \[\hPaB \lrar \hPaCD,\] see, for example, \cite{BN, FresseBook1}. 
We write $\Assoc(\kk)$ to denote the set of all such operad isomorphisms.

The following proposition is similar to \cite[Proposition 10.2.6; Theorem 10.2.9]{FresseBook1} and \cite[Section 5]{willwacher2024cyclic}.

\begin{prop}\label{prop: maps to RCD}
The operad equivalence $f: \hPaRB \rightarrow \hRCD$ is uniquely determined by a scalar parameter $\lambda\in\kk^{\times}$ and a group-like element of the complete tensor algebra on two generators $\Phi(x,y)\in T(x,y)$ satisfying the unit, involution, hexagon, and pentagon
relations such that:
\[f(\tau)=e^{\frac{\lambda}{2} t_{11}}, \quad f(\beta) = e^{\frac{\lambda}{2} t_{12}} \quad \text{and} \quad f(\alpha) = \Phi(t_{12}, t_{23})\]
in $\RCD$. Here, $\tau$, $\beta$, and $\alpha$ are the relevant generating isomorphisms of $\PaRB$.
\end{prop}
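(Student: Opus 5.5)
The plan is to reduce to the unframed statement, \cite[Proposition 10.2.6; Theorem 10.2.9]{FresseBook1}, using the semidirect product decompositions $\PaRB(n)\cong\PaB(n)\rtimes\mathbb{Z}^n$ and $\fft_n=\bigoplus_i\kk t_{ii}\oplus\ft_n$. We read $f$ as a morphism into $\hPaRCD$, or equivalently into $\hRCD$ through the equivalence of Proposition~\ref{prop: categorical equivalences RCD and PaRCD}. Since $\hPaRB$ is generated under categorical and operadic composition by $\beta=\beta_{1,2}$, $\alpha=\alpha_{1,2,3}$ and $\tau$, the morphism $f$ is determined by the three images $f(\tau)\in\exp(\fft_1)$, $f(\beta)\in\exp(\fft_2)$ and $f(\alpha)\in\exp(\fft_3)$. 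Uniqueness is therefore immediate once these images are pinned down in terms of $(\lambda,\Phi)$. The work is to show that the images are forced to have the stated form, and conversely that any associator gives a well-defined morphism.

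\textbf{Step 1: the twist.} The algebra $\fft_1$ is one-dimensional, spanned by $t_{11}$, so $f(\tau)=e^{c\,t_{11}}$ for some $c\in\kk$. We set $\lambda:=2c$. The relation in $\PaRB(2)$ expressing the twist of a doubled strand,
\[
\tau\circ_1 \id_{12}=(\tau\otimes\tau)\cdot\beta^2
\]
(with $\beta^2$ the pure double braiding), is sent by the composition rule $t_{11}\circ_1 0=t_{11}+t_{22}+t_{12}$. Comparing the $t_{12}$-components of both sides determines the image of the double braiding.

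\textbf{Step 2: the braiding and associator.} Write $f(\beta)=e^{X}$ with $X\in\fft_2$. The algebra $\fft_2$ is abelian, spanned by $t_{11},t_{22},t_{12}$. The $\Sigma_2$-equivariance and the unit relation $\beta\circ_1 1=\beta$ force $X=a(t_{11}+t_{22})+b\,t_{12}$. The twist relation from Step 1 then forces $a=0$ and $2b=\lambda$. Since $\beta$ must act nontrivially on the associated graded, $\lambda\neq0$. For the associator, $f(\alpha)\in\exp(\fft_3)$ must become trivial under each strand-deletion map $\fft_3\to\fft_2$, because deleting a strand of $\alpha$ gives an identity. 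This kills every $t_{ii}$ component, which is central, so $f(\alpha)$ lies in the kernel of deletion inside $\exp(\ft_3)$. That kernel is the free group generated by $t_{12},t_{23}$, so $f(\alpha)=\Phi(t_{12},t_{23})$. The relations of $\PaRB$ not involving framing (pentagon, the two hexagons, and the unit relation) map into $\exp(\ft_n)$, since every $t_{ii}$ component vanishes there. They are then exactly the unframed relations, which give (I), (P) and (H) by \cite[Theorem 10.2.9]{FresseBook1}.

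\textbf{Step 3: the converse.} Given an associator $(\lambda,\Phi)$, we define $f$ on generators by the stated formulas. We then check it on a presentation of $\PaRB$: the presentation of $\PaB$ together with the twist relations (the doubled-twist identity and naturality of $\tau$ with respect to $\beta$ and $\alpha$). Naturality holds because $t_{ii}$ is central in $\fft_n$. The doubled-twist identity holds by the composition formula, provided the map is arranged so that $t_{11}$ pushes forward correctly.

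Bijectivity is then routine. Prounipotent completion identifies $\exp(\fft_n)$ with $(\PRB_n)_\kk$, and the associated graded map is invertible because $\lambda\neq0$, so $f$ is an isomorphism in each arity.

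I expect the main obstacle to be the coefficient bookkeeping in the twist relation. One must verify that the normalizations $e^{\frac{\lambda}{2}t_{11}}$ and $e^{\frac{\lambda}{2}t_{12}}$ match the composition rule $t_{11}\circ_1 0=t_{11}+t_{22}+t_{12}$, including whether the pure double braid contributes $\lambda t_{12}$ or $\frac{\lambda}{2}t_{12}$. This depends on the paper's convention for $\beta$ versus $\beta^2$ and on whether $\tau$ is a full twist. I would fix these conventions first by computing in arity $2$, and adjust by rescaling $t_{ii}$ if necessary.
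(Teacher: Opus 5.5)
Your overall route (split $\fft_n=\bigoplus_i\kk t_{ii}\oplus\ft_n$, reduce the $\ft_n$-part to Fresse's unframed classification, and fix the framing parameter by the doubled-twist relation) is the one the paper has in mind. The paper gives no separate proof of this proposition beyond citing Fresse and Willwacher; the framing normalization is settled by the twist computation in the proof of Proposition~\ref{prop:bijection framed Assoc and Assoc}.

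However, the point you defer as ``the main obstacle'' is an actual error, and Steps 1--2 fail as written. You use $t_{11}\circ_1 0=t_{11}+t_{22}+t_{12}$. With that rule, the relation $\tau\circ_1\id_{12}=\beta_{1,2}\beta_{2,1}(\tau\otimes\tau)$ gives $\frac{\lambda}{2}t_{12}=2b\,t_{12}$, i.e.\ $f(\beta)=e^{\frac{\lambda}{4}t_{12}}$. This contradicts both your own conclusion $2b=\lambda$ and the statement.

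The paper's cabling rule sums over ordered pairs in the inserted block, $t_{ii}\circ_i 0=\sum_{p,q}t_{pq}$, so $t_{11}\circ_1 0=t_{11}+t_{22}+2t_{12}$. This is the same convention as $(01)^*(t_{11})=\sum_{k,l}t_{kl}$ in \eqref{cyclic structure on RCD}, and it is the one used in the proof of Proposition~\ref{prop:bijection framed Assoc and Assoc}. With it, both sides of the twist relation equal $\exp\bigl(\frac{\lambda}{2}(t_{11}+t_{22})+\lambda t_{12}\bigr)$ precisely when $b=\lambda/2$, and nothing needs adjusting. Your fallback of rescaling $t_{ii}$ is not available: it changes the normalization $f(\tau)=e^{\frac{\lambda}{2}t_{11}}$, which is part of what is asserted. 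This is exactly the discrepancy with Gonzalez's convention recorded in the remark after that proposition.

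There is a second gap. You claim that $\Sigma_2$-equivariance and $\beta\circ_1 1=\beta$ force $f(\beta)=\exp\bigl(a(t_{11}+t_{22})+b\,t_{12}\bigr)$, but neither gives this. The unit relation holds for any image. Since $\beta_{1,2}$ is not $\Sigma_2$-fixed, equivariance only says $f(\beta_{2,1})=(12)^*f(\beta_{1,2})$. Write $f(\beta_{1,2})=\exp(a_1t_{11}+a_2t_{22}+b\,t_{12})$; the twist relation then only gives $a_1+a_2=0$. So an antisymmetric framing term $a(t_{11}-t_{22})$ survives your argument.

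What removes it is the pair of hexagon relations, projected onto the central summand $\bigoplus_i\kk t_{ii}$. This projection is a Lie algebra map, and your deletion argument has already removed any central part of $f(\alpha)$. Consider the hexagon expressing $\beta_{(12),3}$ through $\beta_{1,3}$, $\beta_{2,3}$ and associators. Its left side contributes $a_1(t_{11}+t_{22})+a_2t_{33}$, using $t_{11}\circ_1 0$ and $t_{22}\circ_1 0=t_{33}$. Its right side contributes $a_1(t_{11}+t_{22})+2a_2t_{33}$. Hence $a_2=0$, and the other hexagon gives $a_1=0$. Only once both framing coefficients of $f(\beta)$ vanish do the images of $\beta$ and $\alpha$ lie in $\exp(\ft_n)$, which your appeal to Fresse for (I), (H), (P) requires.
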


The Lemma 3.2 of \cite{willwacher2024cyclic} implies that the assignment of Proposition~\ref{prop: maps to RCD} determines a map of cyclic operads. 

\begin{lemma}
The operad equivalence $f:\PaRB \rightarrow \hRCD$ given by the pair $(\lambda, \Phi) \in \kk^{\times}\times \exp(\ff_2)$ uniquely lifts to an equivalence of cyclic operads. 
\end{lemma}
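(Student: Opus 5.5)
We must show that the operad map $f:\hPaRB\to\hRCD$ of Proposition~\ref{prop: maps to RCD} commutes with the cyclic actions, i.e.\ $f(z_{n+1}^*\gamma)=z_{n+1}^*f(\gamma)$ for every morphism $\gamma$ of $\hPaRB(n)$. Uniqueness needs no argument. A cyclic structure is extra data on the same underlying operad, so a cyclic lift of $f$ is $f$ itself, provided $f$ is compatible with the cyclic actions.

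For existence, I reduce the check to generators. By the compatibility formula \eqref{cylic_formula}, $z^*$ of an operadic composite $x\circ_i y$ is again an operadic composite of $z^*x$, $z^*y$ and undecorated elements. Also $z^*$ acts levelwise on the groupoids $\hPaRB(n)$ and $\hRCD(n)$ by functors, so it respects categorical composition. Since $f$ preserves both compositions and $\Sigma_n$-actions, the set of morphisms on which $f\circ z^*=z^*\circ f$ holds is closed under both compositions and under the symmetric actions. By continuity, this extends from $\PaRB$ to its prounipotent completion.

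It therefore suffices to check the identity on the generators $\tau,\beta_{1,2},\alpha_{1,2,3}$, using the stated cyclic structure $z^*\tau=\tau$, $z^*\beta_{1,2}=\beta_{1,2}\cdot\tau$, $z^*\alpha_{1,2,3}=\alpha_{2,3,1}^{-1}$ together with \eqref{cyclic structure on RCD}. The required identities are the following.
\begin{itemize}
\item For $\tau$ one needs $(01)^*t_{11}=t_{11}$ in arity $1$, which holds since $\sum_{k,l=1}^{1} t_{kl}=t_{11}$.
\item For $\beta$ one needs $z_3^*\big(e^{\frac{\lambda}{2}t_{12}}\big)=e^{\frac{\lambda}{2}t_{12}}e^{\frac{\lambda}{2}t_{11}}$, with the strand label of the twist dictated by conventions. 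Writing $z_3$ as a product of $(01)$ and a transposition in $\Sigma_2$, this reduces to an explicit linear computation on $\mathfrak{ft}_2$. It is cleanest in the spherical presentation of Remark~\ref{remark: cyc framed DK}, where the residue relation \eqref{eq:residue relation} rewrites $t_{12}$ under rotation as $t_{12}$ plus a diagonal term. That diagonal term is central by the last relation in \eqref{equ:drinfeldkohno_rel}, so the exponential splits.
\item For $\alpha$ one needs $z_4^*\Phi(t_{12},t_{23})=\Phi(t_{23},t_{31})^{-1}$ in $\exp(\mathfrak{ft}_3)$, up to the index convention $\alpha_{2,3,1}$.
\end{itemize}

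The third identity is the main obstacle. First I compute $z_4^*$ on $t_{12}$ and $t_{23}$ via the spherical presentation. With indices $0,\dots,3$ I expect rotation to send $t_{12}\mapsto t_{23}$ and $t_{23}\mapsto t_{03}$. Then $t_{03}=-(t_{13}+t_{23}+t_{33})$, and after discarding the central $t_{33}$ this becomes $t_{31}$ modulo the relation $t_{12}+t_{13}+t_{23}=0$ appearing in \eqref{hexagon}. The diagonal terms must be handled carefully: $\Phi$ is a Lie series in commutators only, apart from its degree-zero part, and the diagonal generators are central. Hence $\Phi(a+c,b+c')=\Phi(a,b)$ for central $c,c'$, so those terms drop out.

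This leaves $\Phi(t_{23},t_{31})$ with the image of $t_{12}+t_{13}+t_{23}$ being central. Working in the quotient by the centre, the involution relation (I) should then convert $\Phi(t_{23},t_{31})$ into $\Phi(t_{31},t_{23})^{-1}$, matching $f(\alpha_{2,3,1}^{-1})$.

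If the relabelling conventions do not line up directly, I would fall back on the cited argument, \cite[Lemma 3.2]{campos2019configuration} and the preceding remark on \cite{willwacher2024cyclic}. That argument proves cyclicity of the unparenthesized map on the $\Sigma_{n+1}$-generated level. Since $\PaRCD$ is pulled back along $w$ and $\Hom_{\PaRCD(n)}(p,q)=\exp(\mathfrak{ft}_n)$, compatibility on $\alpha$ is then automatic once it holds in $\hRCD$.
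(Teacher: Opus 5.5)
Your route is the same as the paper's: reduce to $\tau$, $\beta_{1,2}$, $\alpha_{1,2,3}$ and compare with the cyclic action on $\hRCD$. However, both nontrivial generator checks fail as you have set them up.

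For $\beta$ you aim at $z_3^*\bigl(e^{\frac{\lambda}{2}t_{12}}\bigr)=e^{\frac{\lambda}{2}t_{12}}e^{\frac{\lambda}{2}t_{11}}$, and you assert that rotation sends $t_{12}$ to $t_{12}$ plus a diagonal term. It does not. By \eqref{cyclic structure on RCD} one has $(01)^*t_{12}=-t_{12}-t_{22}$. In the spherical presentation, rotation sends $t_{12}$ to $t_{20}=-t_{12}-t_{22}$ (or to $t_{01}=-t_{11}-t_{12}$). Either way the coefficient of $t_{12}$ changes sign. Hence $z_3^*f(\beta_{1,2})=e^{-\frac{\lambda}{2}(t_{12}+t_{22})}=f(\beta_{1,2}^{-1}\tau_2^{-1})$. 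Since $\lambda\neq 0$, its degree-one part differs from that of $f(\beta_{1,2}\tau)$. So the formula $z^*\beta_{1,2}=\beta_{1,2}\cdot\tau$ that you quote from the beginning of Section~\ref{sec:Associators} is not compatible with $f$. The identity that actually holds, and the one the paper's computation uses, is $z^*f(\beta_{1,2})=f(\beta_{1,2}^{-1}\tau_2^{-1})$. The proof prints $\tau_2$, but Lemma~\ref{lemma: maps PaRB to PaRCD} has the correct exponent.

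For $\alpha$ there are two slips. First, under your rotation $z_4^*t_{23}=t_{03}=-(t_{13}+t_{23}+t_{33})$. Modulo the central elements $t_{33}$ and $t_{12}+t_{13}+t_{23}$, this is $t_{12}$, not $t_{31}$. What you actually get is $z_4^*\Phi(t_{12},t_{23})=\Phi(t_{23},t_{12})=\Phi(t_{12},t_{23})^{-1}=f(\alpha_{1,2,3}^{-1})$. This equals $f(\alpha_{2,3,1}^{-1})=\Phi(t_{23},t_{31})^{-1}$ only if $\Phi(t_{12},t_{23})=\Phi(t_{23},t_{31})$, which fails in general. It fails already in degree $3$ for $\Phi_{\mathbf{KZ}}$, whose $\zeta(3)$-term is not invariant under $(t_{12},t_{23})\mapsto(t_{23},t_{31})$ modulo the centre. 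Second, even granting $\Phi(t_{23},t_{31})$, your final step is off by an inverse. (I) rewrites it as $\Phi(t_{31},t_{23})^{-1}$, whereas $f(\alpha_{2,3,1}^{-1})=\Phi(t_{23},t_{31})^{-1}=\Phi(t_{31},t_{23})$.

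The underlying problem is that ``up to the index convention'' is not a harmless hedge. The $z^*$ acting on $\mathfrak{ft}_3$ and the $z^*$ acting on the braid generators must be the same operation. You pair a genuine $4$-cycle on one side with the paper's formula $\alpha_{2,3,1}^{-1}$ on the other. The paper's proof instead stays with \eqref{cyclic structure on RCD}, taking $z^*t_{12}=-t_{12}-t_{22}-t_{32}$ and $z^*t_{23}=t_{23}$. After discarding the central $t_{22}$ and $t_{12}+t_{23}+t_{31}$, this gives $\Phi(t_{31},t_{23})=\Phi(t_{23},t_{31})^{-1}=f(\alpha_{2,3,1}^{-1})$.

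Your fallback does not close the gap either. The morphism $\alpha$ exists only on the parenthesized source, and the target here is already $\hRCD$. So the remark about pulling back along $w$ is beside the point, and the fallback simply defers to the citation.
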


\begin{proof}
It suffices to check the cyclic compatibility on the generators of $\PaRB$.
    
For braiding: $z^*f(\beta_{1,2}) = z^*(e^{\frac{\lambda}{2} t_{12}}) = e^{\frac{\lambda}{2} z^* t_{12}} = e^{\frac{\lambda}{2} (-t_{12}- t_{22})} = f(\beta^{-1}_{1,2} \cdot \tau_2) = f(z^* \beta_{1,2}).$

For twist: $z^*f(\tau_{1}) = z^*(e^{\frac{\lambda}{2} t_{11}}) = e^{\frac{\lambda}{2} z^* t_{11}} = e^{\frac{\lambda}{2} t_{11}}  = f(z^* \tau_{1})$.
    
 For associativity: $f(z^*\alpha_{1,2,3}) = f(\alpha_{2,3,1}^{-1}) = \Phi(t_{23},t_{31})^{-1} =\Phi(t_{31}, t_{23})$, and \[z^*f(\alpha_{1,2,3}) = z^* \Phi(t_{12}, t_{23}) = \Phi(-t_{12}-t_{22}-t_{32} , t_{23}) = \Phi(t_{31}-t_{22}, t_{23}) = \Phi(t_{31}, t_{23}), \] where the second equality uses the cyclic action on $t_{12}$ seen as an element in $\mathfrak{ft}_3$; the third uses the central identity $t_{12}+t_{23}+t_{31} = 0$ and $t_{ij} = t_{ji}$ for all $i,j$. The last equality uses the fact that $t_{22}$ is central, and for any central element $z$, $\Phi(x+z,y) = \Phi(x,y)$.
\end{proof}

\begin{lemma}\label{lemma: maps PaRB to PaRCD}
A morphism of operads $f:\PaRB \rightarrow \hPaRCD$ is given by the pair $(\lambda, \Phi) \in \kk^{\times}\times \exp(\ff_2)$,
in particular
\[f(\tau)= e^{\frac{\lambda}{2} t_{11}} \cdot I_{1}, \quad f(\beta) = e^{\frac{\lambda}{2} t_{12}} \cdot X_{1,2} \quad \text{and} \quad f(\alpha) = \Phi(t_{12}, t_{23})\cdot A_{1,2,3},\] lifts to a morphism of cyclic operads. 
\end{lemma}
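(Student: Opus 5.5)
The plan is to reduce the lemma to the preceding one by a pullback argument, then check cyclic compatibility only on generators.

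\textbf{Step 1: reduction to generators.} By Proposition~\ref{prop: categorical equivalences RCD and PaRCD}, $\hPaRCD = w^*\hRCD$, and on morphisms the pullback changes nothing: $\Hom_{\PaRCD(n)}(p,q)=\exp(\mathfrak{ft}_n)$. The cyclic structure on $\PaRCD$ is induced from the one on $\RCD$, with the extra datum that $z_{n+1}^*$ acts on the objects $\magma(n)$ through the cyclic structure on parenthesizations. So $f$ is determined by two pieces of data:
\begin{itemize}
\item its effect on objects, which is the identity on $\magma$;
\item its composite with the projection $\hPaRCD\to\hRCD$, which is the equivalence of Proposition~\ref{prop: maps to RCD}.
\end{itemize}
Both $z^*\circ f$ and $f\circ z^*$ are operad maps $\PaRB\to\hPaRCD$ that are compatible with the cyclic formula \eqref{cylic_formula}. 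Since $\PaRB$ is generated under categorical and operadic composition by $\beta_{1,2}$, $\alpha_{1,2,3}$ and $\tau$, it suffices to check
\[
z^*f(g)=f(z^*g)\quad\text{for } g\in\{\tau,\beta_{1,2},\alpha_{1,2,3}\}.
\]
This holds as equalities of morphisms between the same source and target objects. Uniqueness of the lift is automatic, because a cyclic structure on an operad map is a property rather than extra data.

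\textbf{Step 2: the generator checks.} Write each morphism in the form "element of $\exp(\mathfrak{ft}_n)$ times a generator of $\PaRCD$", and use the cyclic action on the generators $I$, $H$, $X$, $A$ listed above.

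\emph{Twist.} Using $z_2^*(I)=I$ and $z^*t_{11}=t_{11}$, we get $z^*(e^{\frac{\lambda}{2}t_{11}}I_1)=e^{\frac{\lambda}{2}t_{11}}I_1=f(\tau)=f(z^*\tau)$.

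\emph{Braiding.} Since $z_3^*X_{1,2}=X_{1,2}$ and $z^*t_{12}=-t_{12}-t_{22}$, we have
\[
z^*f(\beta)=e^{-\frac{\lambda}{2}(t_{12}+t_{22})}X_{1,2}.
\]
We must compare this with $f(\beta_{1,2}\cdot\tau)=e^{\frac{\lambda}{2}t_{12}}X_{1,2}\cdot e^{\frac{\lambda}{2}t_{11}}I$. The sign convention for $z^*\beta$ has to be the one used in the previous lemma, namely $\beta^{-1}\tau$. To match, I will use the following facts:
\begin{itemize}
\item $t_{22}$ is central;
\item commuting $X_{1,2}$ past a chord relabels indices;
\item the relation $t_{11}\circ_1 0 = t_{11}+t_{22}+t_{12}$ in $\mathfrak{ft}_2$ controls how a twist on a doubled strand decomposes.
\end{itemize}

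\emph{Associator.} Since $z_4^*A_{1,2,3}=A_{2,3,1}^{-1}$, we get
\[
z^*f(\alpha)=z^*\Phi(t_{12},t_{23})\cdot A_{2,3,1}^{-1}=\Phi(t_{31},t_{23})A^{-1}_{2,3,1}.
\]
The second equality is the computation of the previous lemma, namely $z^*t_{12}=t_{31}-t_{22}$ together with centrality of $t_{22}$. On the other side,
\[
f(\alpha_{2,3,1}^{-1})=\big(\Phi(t_{23},t_{31})A_{2,3,1}\big)^{-1}=A^{-1}_{2,3,1}\Phi(t_{23},t_{31})^{-1}=A^{-1}_{2,3,1}\Phi(t_{31},t_{23}),
\]
where the last step uses the involution relation (I). Because $A$ is an identity-type morphism in $\exp(\mathfrak{ft}_3)$ that only changes the parenthesization, it commutes with the coefficient. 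So the two sides agree.

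\textbf{Main obstacle.} The delicate point is the braiding check. It requires the correct bookkeeping of framing terms ($t_{ii}$ versus $t_{12}$ under $z_3^*$) and sign conventions, so that $e^{-\frac{\lambda}{2}(t_{12}+t_{22})}$ matches $f(z^*\beta)$ exactly in $\hPaRCD(2)$ and not merely up to central terms. I would first resolve this in the spherical presentation of Remark~\ref{remark: cyc framed DK}, where $t_{02}=-t_{12}-t_{22}$ makes the $z^*$-image transparent, and then translate back.
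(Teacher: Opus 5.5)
Your strategy is the paper's: check $z^*f(g)=f(z^*g)$ on the generators $\tau$, $\beta_{1,2}$ and $\alpha_{1,2,3}$. Your twist check is identical to the paper's, and your associator check is essentially the paper's. For the associator, your remark that $A_{2,3,1}$ is the unit of $\exp(\mathfrak{ft}_3)$, viewed as a morphism between two parenthesizations, is a cleaner way to finish than the paper's conjugation: it lets $A_{2,3,1}$ commute with $\Phi(t_{31},t_{23})$. One small inaccuracy in Step 1: $z^*\circ f$ and $f\circ z^*$ are not operad maps. What justifies restricting to generators is that the morphisms on which $z^*$ and $f$ commute are closed under categorical composition and inverses, and, by \eqref{cylic_formula}, under operadic composition.

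The genuine gap is the braiding step, which you leave unresolved; the targets you write down cannot be reached.
\begin{itemize}
\item In arity $2$ the only relations make $t_{11}$ and $t_{22}$ central, so $\mathfrak{ft}_2$ is abelian with basis $t_{11},t_{12},t_{22}$.
\item Hence $z_3^*f(\beta_{1,2})=e^{-\frac{\lambda}{2}(t_{12}+t_{22})}X_{1,2}$ differs from $f(\beta_{1,2}\tau_i)=e^{\frac{\lambda}{2}(t_{12}+t_{ii})}X_{1,2}$.
\item It also differs from $f(\beta_{1,2}^{-1}\tau_2)=e^{\frac{\lambda}{2}(t_{22}-t_{12})}X_{1,2}$.
\item Your three tools cannot bridge this. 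Centrality is vacuous in an abelian algebra, $X_{1,2}$ only swaps $t_{11}\leftrightarrow t_{22}$, and the doubling formula for $t_{11}\circ_1 0$ plays no role in arity $2$. None of them can flip the sign of a coefficient.
\end{itemize}
So with $z^*\beta=\beta\tau$ or $z^*\beta=\beta^{-1}\tau_2$ (the two versions stated earlier in the paper), the required identity is false, not merely delicate.

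The missing input is the convention the paper actually uses in its proof of this lemma: $z_3^*\beta_{1,2}=\beta_{1,2}^{-1}\tau_2^{-1}$. This is the negative crossing $(12)\to(21)$ composed with the inverse twist on strand $2$. You have $f(\tau_2)=f(\id\circ_2\tau)=e^{\frac{\lambda}{2}(0\circ_2 t_{11})}=e^{\frac{\lambda}{2}t_{22}}$, and $t_{22}$ is central. Hence $f(z_3^*\beta_{1,2})=e^{-\frac{\lambda}{2}t_{12}}e^{-\frac{\lambda}{2}t_{22}}X_{1,2}=z_3^*f(\beta_{1,2})$, and the check closes in one line. Your spherical heuristic points exactly here: $z_3^*t_{12}=t_{02}=-t_{12}-t_{22}$ says that both the crossing and the twist must be inverted, not just the crossing.
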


\begin{proof}
We need to check the cyclic compatibility of the map $f$. The actions $z^*_2 \cdot I_{1} = I_{1}$ and $z^*_2 \cdot t_{11} = t_{11}$ imply $z^*_2 \cdot f(\tau) = e^{\frac{\lambda}{2} t_{11}} \cdot I_{1} = f(z^*_2 \cdot \tau)$. Similarly, $z^*_3 \cdot f(\beta_{1,2}) = e^{\frac{\lambda}{2} (-t_{12}-t_{22})} \cdot X_{1,2}$ and $ f(z^*_3 \cdot \beta_{1,2}) = f(\beta_{1,2}^{-1} \tau_{2}^{-1}) = e^{\frac{\lambda}{2} (-t_{12}-t_{22})} \cdot X_{1,2}$. 

To check the cyclic action on the associator $\alpha_{1,2,3}$, we first note that the generators $t_{12}$ and $t_{23}$ of $\mathfrak{ft}_3$ are written operadically as \[t_{12} = \id_2\circ_1 H_{1,2} = H_{1,2}, \quad \text{and} \quad t_{23}  = A_{1,2,3}(\id_2 \circ_2 H_{1,2})A_{1,2,3}^{-1} = A_{1,2,3} H_{2,3} A_{1,2,3}^{-1}.\]
We check that $z^*_4 \cdot t_{12} = z^*_4 \cdot (\id_2\circ_1 H_{1,2}) = -(\id_2\circ_1 H_{1,2}) - (\id_2\circ_2 I) - (\id_2\circ_2 H_{2,1})$ and $z^*_4 \cdot t_{23} = A^{-1}_{2,3,1}(\id_2\circ_2 H_{1,2}) A_{2,3,1}$. Now we have the following \[f(z^*_4 \cdot \alpha_{1,2,3}) = f(\alpha_{2,3,1}^{-1}) = A^{-1}_{2,3,1} \cdot \Phi(t_{23},t_{31})^{-1} = A^{-1}_{2,3,1} \cdot \Phi(t_{31}, t_{23}),\] Since $z^*_4 \cdot \Phi(t_{12}, t_{23}) = \Phi(t_{31}, t_{23})$, we get $z^*_4 \cdot f(\alpha_{1,2,3}) = \Phi(t_{31}, t_{23})\cdot A_{2,3,1}^{-1}$, which reduces to \[\Phi( t_{31} , A^{-1}_{2,3,1} t_{23} A_{2,3,1} )\cdot A^{-1}_{2,3,1} = A^{-1}_{2,3,1} \cdot \Phi(t_{31},t_{23})^{-1}\cdot A_{2,3,1}\cdot A^{-1}_{2,3,1}  = A^{-1}_{2,3,1} \cdot \Phi(t_{31}, t_{23}).\]
\end{proof}

Let $\Op^{+}(\calO,\calP)$ be the set of object-fixing operad isomorphisms $f:\calO\to\calP$, and let $\Cyc^{+}(\calO,\calP)$ be the set of object-fixing cyclic operad isomorphisms. Lemma~\ref{lemma: maps PaRB to PaRCD} implies the following isomorphisms. 
\begin{equation}\label{eq: iso between cyc associator and associator}
\Op^{+}(\PaRB, \hPaRCD) \cong  \Op^{+}(\hPaRB, \hPaRCD) \cong \Cyc^{+}(\hPaRB^{\cyc}, \hPaRCD^{\cyc}).
\end{equation}

\begin{prop}\label{prop:bijection framed Assoc and Assoc}
There is a bijection between the sets $\mathsf{Assoc}_{\kk}$ and $\Op^{+}(\hPaRB, \hPaRCD)$. 
\end{prop}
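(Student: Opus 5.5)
We will construct mutually inverse maps between $\mathsf{Assoc}_{\kk}$ and $\Op^{+}(\hPaRB,\hPaRCD)$, following the unframed argument of \cite[Theorem 10.2.9]{FresseBook1} and using Proposition~\ref{prop: maps to RCD} and Lemma~\ref{lemma: maps PaRB to PaRCD}.

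\textbf{From associators to isomorphisms.} Given $(\lambda,\Phi)\in\mathsf{Assoc}_{\kk}$, we define $f$ on generators by
\[
f(\tau)=e^{\frac{\lambda}{2}t_{11}}I_1,\qquad f(\beta)=e^{\frac{\lambda}{2}t_{12}}X_{1,2},\qquad f(\alpha)=\Phi(t_{12},t_{23})A_{1,2,3}.
\]
We first show that $f$ extends to an operad morphism on $\PaRB$. Since $\PaRB(n)\cong\PaB(n)\rtimes\mathbb{Z}^n$, the operad $\PaRB$ is presented by the $\PaB$ generators $\beta,\alpha$ subject to the pentagon and the two hexagons, together with the twist $\tau$. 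The additional ribbon relations say that $\tau$ is central and that the twist on a doubled strand is $(\tau\circ_1\mu)=\beta_{2,1}\beta_{1,2}(\tau_1\tau_2)$.

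The pentagon and hexagon relations go to (P) and (H). For the hexagon, one uses that $\fft_3\to\ft_3$ splits with central kernel spanned by the $t_{ii}$, so (H) in $\ft_3$ lifts verbatim. The involution relation (I) gives the second hexagon. The twist relations follow from the composition formula $t_{11}\circ_1 0=t_{11}+t_{22}+2t_{12}$, together with the centrality of the $t_{ii}$ and the fact that $t_{12}$ commutes with $t_{11}+t_{22}$.

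We then extend $f$ to the prounipotent completion $\hPaRB$ by the universal property. To see that $f$ is an isomorphism, we argue arity by arity. On the vertex groups we get $\hat{\PRB}_n\to\exp(\fft_n)$, which on associated graded sends the generator of $\PRB_n$ indexed by $(i,j)$ to $\lambda t_{ij}$. This is an isomorphism because $\lambda\neq 0$, by Kohno's isomorphism in its framed form. Since both groupoids are connected, the whole arity component is an isomorphism, and $f$ is object-fixing by construction.

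\textbf{From isomorphisms to associators.} Conversely, let $f$ be an object-fixing isomorphism. Object-fixing forces $f(\tau)=g\,I_1$ with $g\in\exp(\fft_1)=\exp(\kk t_{11})$, $f(\beta)=b\,X_{1,2}$ with $b\in\exp(\fft_2)$, and $f(\alpha)=\Phi\,A_{1,2,3}$ with $\Phi\in\exp(\fft_3)$.

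The first step is to pin down $b$. Compatibility with the $\Sigma_2$-action and the relation $\beta\circ\beta\mapsto$ centralizer arguments force $b=e^{\frac{\mu}{2}t_{12}+c(t_{11}+t_{22})}$. The ribbon relation then gives $c=0$ and $g=e^{\frac{\lambda}{2}t_{11}}$ with $\lambda=\mu$. Invertibility of $f$ gives $\lambda\neq0$, since otherwise the map on associated graded is degenerate.

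The second step is to show that $\Phi$ lies in the image of $\exp(\ff_2)$ under $x\mapsto t_{12}$, $y\mapsto t_{23}$. For this we use compatibility with the strand-deleting (unit) maps to kill the components in $t_{ii}$ and in $t_{12}+t_{13}+t_{23}$. Indeed, $\fft_3\cong\kk^3\oplus\kk\,(t_{12}+t_{13}+t_{23})\oplus\ff_2$, and the degeneracies force those central parts of $\log\Phi$ to vanish, exactly as in \cite[Proposition 10.2.6]{FresseBook1}. The images of the pentagon and hexagon relations then yield (P), (H) and (I) for $(\lambda,\Phi)$.

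The two constructions are mutually inverse because $\tau,\beta,\alpha$ generate $\PaRB$ operadically, so $f$ is determined by its values on them.

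\textbf{Main obstacle.} We expect the hardest part to be the framed bookkeeping: showing that the $t_{ii}$-components and the diagonal central terms are forced to vanish, or at least forced to be determined by $\lambda$. These are the only new features compared with the unframed case. We would first try the projection $\pi:\fft_n\to\ft_n$, combined with the unframed bijection and a separate computation of the arity-one and arity-two data.
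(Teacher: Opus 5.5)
Your forward direction is sound. The images of the pentagon and the two hexagons involve only $t_{ij}$ with $i\neq j$, so they are the unframed relations. The ribbon relation follows from $t_{11}\circ_1 0=t_{11}+t_{22}+2t_{12}$, and the associated-graded argument gives invertibility.

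\textbf{The gap.} It is in the converse, at the step where you pin down $b$. The algebra $\fft_2=\kk t_{11}\oplus\kk t_{22}\oplus\kk t_{12}$ is abelian, so centralizer arguments give nothing. The $\Sigma_2$-action only relates $f(\beta_{1,2})$ to $f(\beta_{2,1})$. There is also no relation of the form ``$\beta\circ\beta$'' in arity $2$, since the vertex group $\PRB_2\cong\mathbb{Z}^3$ is free abelian. Write $b=e^{at_{12}+c_1t_{11}+c_2t_{22}}$ and $g=e^{\ell t_{11}}$. Then the ribbon relation gives exactly $\ell=a$ and $c_1+c_2=0$. Nothing forces the framing part of $b$ to be symmetric, so the antisymmetric component $c_1(t_{11}-t_{22})$ survives.

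\textbf{Why no better argument exists.} With operads starting in arity $1$, as in the paper, no relation of $\PaRB$ can remove this component. Take $F=e^{\kappa t_{11}}\in\exp(\fft_2)$, and let $p\mapsto F_p$ be the unique operad map $\magma\to\exp(\fft)$ with $\mu\mapsto F$. It satisfies $F_{p\circ_i q}=F_p\circ_i F_q$ and $F_{p\sigma}=\sigma^*F_p$, and $\circ_i$ is multiplicative. Hence $x\mapsto F_q\,x\,F_p^{-1}$, for $x\colon p\to q$, is an object-fixing operad automorphism of $\hPaRCD$. It fixes $I_1$ and $H_{1,2}$, but it sends
\[
X_{1,2}\mapsto e^{\kappa(t_{22}-t_{11})}X_{1,2},\qquad A_{1,2,3}\mapsto e^{-\kappa t_{11}}e^{-2\kappa t_{12}}A_{1,2,3}.
\]
Composing it with the isomorphism attached to $(\lambda,\Phi)$ gives an object-fixing isomorphism $\hPaRB\to\hPaRCD$ with $c_1=-c_2=-\kappa\neq 0$. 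This isomorphism is not of the form produced by your first construction, so your two constructions are not mutually inverse on all of $\Op^{+}(\hPaRB,\hPaRCD)$.

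\textbf{How to repair it.} You need one extra input. One option is to work with unitary operads, as in Fresse, and apply the strand-deletion maps to $f(\beta)$ as well: $b\circ_2\ast=1$ and $b\circ_1\ast=1$ give $c_1=c_2=0$. This is the same unitarity your second step already uses for $\Phi$. You should state it explicitly, because the paper's operads have no arity-$0$ part, so those degeneracies are not part of the structure there. The other option is to restrict to isomorphisms carrying $\hPaB$ into $\hPaCD$. This is effectively what the paper's proof does when it passes to $\bar F\colon\hPaB\to\hPaCD$ and sets $F(\beta_{1,2})=\bar F(\beta_{1,2})=e^{\mu t_{12}/2}X_{1,2}$. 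After that, only the twist remains to be fixed by the ribbon relation.
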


\begin{proof}
The universal property of prounipotent completion imply that every map $\PaRB\rightarrow \hPaRCD$ is the unique extension of an operad map $\hPaRB\rightarrow \hPaRCD$.  We can therefore apply \cite[Lemma 7.4 ]{Boavida-Horel-Robertson} to deduce that an operad map $F: \hPaRB \lrar \hPaRCD$ defines an operad map $\bar{F}:\hPaB\rightarrow \hPaCD$ if we set $F(\tau) =0$. The assignment $F\mapsto \bar{F}$ defines a map $\Iso_{0}(\hPaRB, \hPaRCD)\rightarrow \Iso_{0}(\hPaB, \hPaCD)$ that admits a section.  Indeed, an operad map $\bar{F}: \hPaB\lrar \hPaCD$ can be extended to an operad map $F: \hPaRB \lrar \hPaRCD$ via the values \[F(\beta_{1,2}) =\bar{F}(\beta_{1,2}) = e^{\mu t_{12}/2} X_{1,2}, \qquad F(\alpha_{1,2,3}) =\bar{F}(\alpha_{1,2,3}) = f(t_{12}, t_{23}) \A_{1,2,3}, \quad \text{and} \quad F(\tau) = e^{\lambda t_{11}/2} I, \] for some $\lambda\in \kk^{\times}$. Since we require that $F(\tau)\in\hPaCD(1)$ respects the ribbon twist axiom, we have that
\begin{multline}
    F(\tau \circ_1 \id_{1,2}) = e^{\frac{\lambda t_{11}}{2}} \circ_1 e^{0} = e^{\frac{\lambda (t_{11} \circ_1 0)}{2}} = e^{\frac{\lambda(t_{11}+t_{22}+ 2 t_{12})}{2}} =\\
    F(\beta_{1,2} \cdot \beta_{2,1}\cdot (\id_{1,2} \circ_1 \tau) \cdot (\id_{1,2} \circ_2 \tau)) = e^{\frac{\mu t_{12}}{2}} \cdot e^{\frac{\mu t_{12}}{2}} \cdot e^{\frac{\lambda (0 \circ_1 t_{11})}{2}} \cdot e^{\frac{\lambda (0 \circ_2 t_{22})}{2}} = e^{\mu t_{12} + \frac{\lambda(t_{11}+ t_{22})}{2}}.
\end{multline} Since the elements $t_{ii}$ are central in $\mathfrak{ft}_2$, the map $F$ will only satisfy the ribbon twist axiom if $\lambda = \mu$. It follows that every framed $\kk$-associator $F:\hPaRB \lrar \hPaRCD$ reduces to the data of a pair $(\mu, f)\in \kk^{\times} \times \exp(\ft_3)$ which satisfies the defining equations of the $\kk$-associator $\bar{F}:\hPaB \lrar \hPaCD$. 
\end{proof}

\begin{remark}
The Proposition~\ref{prop:bijection framed Assoc and Assoc} differs from \cite[Proposition 5.3]{gonzalez2018contributions} in the presentation of the framed Drinfeld-Kohno Lie algebra. By replacing $t_{ii}/2$ with $t_{ii}$, one obtains Gonzalez's result on the bijection between framed $\kk$-associators and $\kk$-associators. This is due to the difference between our presentation of the framed Drinfeld-Kohno Lie algebra and the presentation by Ševera~\cite{severa2009formality}. 
\end{remark}

\begin{prop}\label{prop: bijection of cyclic associators}
There is a bijection between the sets $\assoc_{\kk}$ and $\Cyc^{+}(\hPaRB^{\cyc}, \hPaRCD^{\cyc})$.
\end{prop}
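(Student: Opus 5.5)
The plan is to compose two bijections established above. Proposition~\ref{prop:bijection framed Assoc and Assoc} already gives $\assoc_{\kk}\cong\Op^{+}(\hPaRB,\hPaRCD)$. So it remains to show that the forgetful map
\[
\Cyc^{+}(\hPaRB^{\cyc},\hPaRCD^{\cyc})\longrightarrow \Op^{+}(\hPaRB,\hPaRCD)
\]
is a bijection. This is the content of \eqref{eq: iso between cyc associator and associator}, and I would justify it carefully in two halves.

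\emph{Injectivity.} A cyclic operad morphism is, by definition, an operad morphism that commutes with the extended $\Sigma_n^+$-actions. Two cyclic morphisms with the same underlying operad morphism are therefore equal. So the forgetful map is injective, and the word ``uniquely'' in the lifting statement is automatic.

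\emph{Surjectivity.} Take an object-fixing operad isomorphism $F:\hPaRB\to\hPaRCD$. Since $F$ fixes objects and $\hPaRB$ is generated by $\beta_{1,2}$, $\alpha_{1,2,3}$ and $\tau$, the morphism $F$ is determined by their images. By Proposition~\ref{prop:bijection framed Assoc and Assoc}, together with the ribbon twist computation in its proof, these images have the form
\[
F(\tau)=e^{\lambda t_{11}/2}I,\qquad F(\beta)=e^{\lambda t_{12}/2}X_{1,2},\qquad F(\alpha)=\Phi(t_{12},t_{23})A_{1,2,3}
\]
for some $(\lambda,\Phi)\in\assoc_{\kk}$. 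Lemma~\ref{lemma: maps PaRB to PaRCD} then shows that $F$ intertwines $z^*$ on each generator.

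To pass from generators to all morphisms, I would argue as follows. The set of morphisms $x$ with $F(z^*x)=z^*F(x)$ is closed under categorical composition and inverses, because $z^*$ acts by groupoid functors. It is also closed under operadic composition: the compatibility formula \eqref{cylic_formula} expresses $z^*(x\circ_i y)$ through $z^*x$, $z^*y$ and operadic compositions, which $F$ preserves. For the $\Sigma_n$-part we use that $F$ is already $\Sigma_n$-equivariant. The relations $z_{n+1}^{n+1}=1$ and the way $z$ and $\Sigma_n$ together generate $\Sigma_n^+$ then give full $\Sigma_n^+$-equivariance. Finally, the universal property of prounipotent completion extends the equivariance from $\PaRB$ to $\hPaRB$, because both actions are continuous.

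\emph{Expected obstacle.} The delicate point is the associator generator. Checking $z_4^*F(\alpha_{1,2,3})=F(\alpha_{2,3,1}^{-1})$ requires rewriting $z_4^*\Phi(t_{12},t_{23})$ as $\Phi(t_{31},t_{23})$. This uses three facts: the cyclic action \eqref{cyclic structure on RCD}, the relation $t_{12}+t_{23}+t_{13}=0$ modulo central framing terms, and the invariance $\Phi(x+c,y)=\Phi(x,y)$ for central $c$. It also uses (I) to identify $\Phi(t_{23},t_{31})^{-1}$ with $\Phi(t_{31},t_{23})$.

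This is exactly what Lemma~\ref{lemma: maps PaRB to PaRCD} verifies, so I would cite it. I would also double-check there that conjugation by $A_{2,3,1}$ is handled consistently.

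Composing the two bijections then yields $\assoc_{\kk}\cong\Cyc^{+}(\hPaRB^{\cyc},\hPaRCD^{\cyc})$.
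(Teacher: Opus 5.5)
Your route is the paper's. You compose Proposition~\ref{prop:bijection framed Assoc and Assoc} with the forgetful map $\Cyc^{+}(\hPaRB^{\cyc},\hPaRCD^{\cyc})\to\Op^{+}(\hPaRB,\hPaRCD)$, which is injective for free and surjective by the generator check of Lemma~\ref{lemma: maps PaRB to PaRCD}; the paper simply cites that lemma.

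\textbf{The gap.} It lies in the step you add to pass from $\tau,\beta_{1,2},\alpha_{1,2,3}$ to all of $\hPaRB$. These three elements generate $\hPaRB$ only if relabelling by $\Sigma_n$ is allowed. For instance, $\alpha_{2,1,3}\colon(21)3\to 2(13)$ is not a categorical or operadic composite of them and identities. So your set $S_z=\{x : F(z^*x)=z^*F(x)\}$ must be shown to be $\Sigma_n$-stable, and $\Sigma_n$-equivariance of $F$ does not give this, because $z$ does not normalise $\Sigma_n$. Relabelling and then rotating equals, in general, rotating $k\neq 1$ times and then relabelling. Hence $\sigma^*x\in S_z$ requires $z^k$-compatibility of $x$, i.e.\ $z$-compatibility on $z^*x,\dots,z^{(k-1)*}x$ as well.
\begin{itemize}
\item For $\tau$ and $\beta$ this bootstraps, since $z^*\tau$, $z^*\beta$ and $\beta_{2,1}$ are words in the unrelabelled generators; the last follows from the ribbon relation.
\item For $\alpha$ it is circular. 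The image $z^*\alpha_{1,2,3}=\alpha_{2,3,1}^{-1}$ is itself a relabelled generator, so $z^*\alpha\in S_z$ already presupposes $\alpha_{2,3,1}\in S_z$, which in turn needs $z^2$- or $z^3$-compatibility of $\alpha$.
\end{itemize}

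\textbf{A repair.} Work instead with $T=\{x : F(h^*x)=h^*F(x)\text{ for all } h\in\Sigma_n^+\}$. This set is $\Sigma^+$-stable by definition, and it is closed under categorical and operadic composition by iterating \eqref{cylic_formula} together with equivariance of $\circ_i$. It then suffices to put the generators in $T$, and for $\alpha$ you need one more input. Let $\sigma\in\Sigma_3$ be the relabelling with $\sigma^*\alpha_{1,2,3}=\alpha_{2,3,1}$, and let $\rho$ be the element acting as $(\sigma^{-1})^*z^*$. Then $\rho^*\alpha=\alpha^{-1}$, and by the lemma plus $\Sigma_3$-equivariance also $\rho^*F(\alpha)=F(\alpha)^{-1}$. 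Hence $\alpha\in T$ as soon as $\langle\rho\rangle$ has order $4$ and meets $\Sigma_3$ trivially, because then every element of $\Sigma_3^+$ acts as $\pi^*(\rho^*)^j$ with $\pi\in\Sigma_3$. This holds when the rotating generator is the transposition $(01)$ of \eqref{cyclic structure on RCD}, since $\rho$ is then a $4$-cycle. It fails if $\rho$ comes out a transposition, and then compatibility on $\alpha_{2,3,1}$ must be checked directly.

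Your ``expected obstacle'' paragraph points at the lemma's computation, but this orbit issue is the step that actually needs an argument.
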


\begin{proof}
It follows from the following bijections
\begin{equation}
\assoc_{\kk} \leftrightarrow \Assoc_{\kk} \leftrightarrow \Op^{+}(\hPaB, \hPaCD) \leftrightarrow \Op^{+}(\hPaRB, \hPaRCD) \leftrightarrow \Cyc^{+}(\hPaRB^{\cyc}, \hPaRCD^{\cyc}),
\end{equation}
Here, the second and third bijections follow from Proposition~\ref{prop:bijection framed Assoc and Assoc}, and the last follows from Lemma~\ref{lemma: maps PaRB to PaRCD}
\end{proof}

\section{The graded Grothendieck--Teichmüller group and cyclic automorphisms}\label{sec:GRT}

In this section we identify the graded Grothendieck--Teichmüller group with the group of object-fixing automorphisms of the cyclic operad $\hPaRCD^{\cyc}$. This gives the cyclic operadic counterpart of the classical identification of $\GRT_{\kk}$ with automorphisms of parenthesized chord diagrams. The main point is that, unlike the case of $\PaB$ and $\PaRB$, one cannot appeal to a known presentation of $\PaCD$ or $\PaRCD$ to deduce cyclic compatibility formally. Instead, we verify directly that the defining relations of $\GRT_{\kk}$ are preserved by the cyclic action. 


Let $\ff_2$ denote the degree-completed free Lie algebra on the generators $x$ and $y$. For any complete filtered algebra $M$ and any pair of elements $a,b \in M$, the universal property of $\hat U(\ff_2)$ determines a continuous algebra morphism
\[
\gamma_{a,b} \colon \hat U(\ff_2)\to M
\]
sending $x\mapsto a$ and $y\mapsto b$. For $f\in \hat U(\ff_2)$, we write $f(a,b):=\gamma_{a,b}(f)\in M.$

\begin{definition}[\cite{Drin}]\label{Def:GRT}
The proalgebraic graded Grothendieck--Teichmüller group $\GRT_{\kk}$ is the semi-direct product $\GRT_1 \rtimes \kk^{\times}$, where the set $\GRT_1$ consists of elements $\Phi \in \exp({\ff_2}) \subset \exp({\ft_3})$ satisfying the following relations,
\begin{align}
    \Phi(x, y) &= \Phi(y, x)^{-1}, \ in \ \exp({\ft_3}) \tag{I}\label{GRT:I}, \\
    \Phi(x, y)\Phi(y, z)\Phi(z, x) &= 1, \ \text{whenever } x + y + z = 0, \ in \ \exp({\ft_3})  \tag{H} \label{GRT:H},\\
   \Phi(t_{12}, t_{23}) \Phi(t_{12}+t_{13}, t_{24}+t_{34})\Phi(t_{23}, t_{34}) &= \Phi(t_{13}+t_{23}, t_{34}) \Phi(t_{12}, t_{23}+t_{24}), \ in \ \exp({\ft_4})\tag{P}\label{GRT:P}.
\end{align}
\end{definition}

The group law on $\GRT_1$ is given by, for any two $\Phi_1$ and $\Phi_2$, by 
\begin{equation}
    ( \Phi_1 \ast \Phi_2)(x,y) = \Phi_1(\Phi_2(x,y)^{-1}x\Phi_2(x,y),y) \Phi_2(x,y).
\end{equation} The action of $\kk^{\times}$ on $\GRT_1$, given by $\Phi(\lambda^{-1}x,\lambda^{-1}y)$ for $\lambda \in \kk^{\times}$, induces a semidirect product $\GRT_1 \rtimes \kk^{\times}$.


An element $(\lambda, \Phi) \in \GRT_{\kk}$ uniquely determines an operad morphism $g:\PaRCD_{\kk} \rightarrow \hPaRCD$ defined by
\begin{equation}\label{eq: assignment of PaRCD to PaRCD}
    g(I_{1})= \lambda I_{1}, \quad g(H_{1,2}) = \lambda H_{1,2}, \quad g(X_{1,2}) = X_{1,2} \quad \text{and} \quad g(A_{1,2,3}) = \Phi(t_{12}, t_{23})\cdot A_{1,2,3}.
\end{equation}
This assignment gives a group isomorphism $\GRT_{\kk} \cong \Aut^{+}_{\Op}(\hPaRCD)$, which follows from \cite[Theorem 10.3.10]{FresseBook1} applied in the framed setting and from the observation that the framing generators $t_{ii}$ are central elements in $\mathfrak{ft}_{n}$ and do not add any new relation between the automorphisms.

Unlike $\PaB$ and $\PaRB$, there is no known explicit presentation of $\PaCD$ by generators and relations, although $\PaCD$ is known to be generated, as an operad in the category of small categories enriched in coassociative coalgebras, by the binary object $12\in \PaCD(2)$ together with the morphisms $A$, $X$, and $H$; see \cite[Remark 2.24]{calaque2024associatorsoperadicpointview} or \cite[Section 10.2]{FresseBook1}. The same issue persists in the framed setting for $\PaRCD$. For this reason, rather than deducing the cyclic compatibility formally from a presentation, we directly verify that the defining relations of $\GRT_{\kk}$ are preserved by the cyclic action $z^*$.

\begin{prop}\label{prop: cyclic GRT on (H) and (I)}
The involution \eqref{GRT:I} and hexagon \eqref{GRT:H} relations of $\GRT_{\kk}$ are preserved under the cyclic action $z^*$.
\end{prop}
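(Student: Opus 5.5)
The plan is to read ``preserved under $z^*$'' as follows: if $\Phi$ satisfies \eqref{GRT:I} and \eqref{GRT:H} in $\exp(\ft_3)\subset\exp(\fft_3)$, then applying $z_4^*$ to both sides of each relation gives identities that again hold. Equivalently, the cyclically rotated versions of \eqref{GRT:I} and \eqref{GRT:H} are consequences of the original ones. The first step is to compute $z_4^*$ on the generators of $\fft_3$, using the transposition formula \eqref{cyclic structure on RCD} or, more symmetrically, the spherical presentation of Remark~\ref{remark: cyc framed DK}. In that presentation $z^*$ simply shifts indices $t_{ij}\mapsto t_{i+1,j+1}$ modulo $4$, with the residue relations $\sum_i t_{ij}=0$ used to eliminate $t_{0j}$ and $t_{00}$. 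Concretely, I expect to recover the computation already used in the proof of Lemma~\ref{lemma: maps PaRB to PaRCD}:
\[
z_4^*t_{12}=t_{31}-t_{22}, \qquad z_4^*t_{23}=t_{23},
\]
up to central framing terms. Here $t_{22}$ is central, and $t_{12}+t_{23}+t_{13}$ is central in $\ft_3$.

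The key observation is that every argument appearing in \eqref{GRT:I} and \eqref{GRT:H} is a linear combination of the $t_{ij}$. Moreover, $z^*$ is a Lie algebra automorphism, so it commutes with substitution: $z^*\Phi(a,b)=\Phi(z^*a,z^*b)$. It then suffices to check that $z^*$ maps the pair $(x,y)=(t_{12},t_{23})$ to a pair $(x',y')$ of the same type, modulo central elements. Since $\Phi\in\exp(\ff_2)$ has no linear term in a central direction, we have $\Phi(x+c,y)=\Phi(x,y)$ for central $c$. This is the same fact used in the earlier lemma, and I would spell it out: $\log\Phi$ is a Lie series with no linear part, so every term is a bracket and $c$ drops out.

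For \eqref{GRT:I}: applying $z^*$ to $\Phi(t_{12},t_{23})\Phi(t_{23},t_{12})=1$ gives
\[
\Phi(t_{31},t_{23})\,\Phi(t_{23},t_{31})=1.
\]
This is again an instance of \eqref{GRT:I}, with variables $(t_{31},t_{23})$, which generate a free Lie algebra modulo the center. Hence it holds.

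For \eqref{GRT:H}: take the instance $x=t_{12}$, $y=t_{23}$, $z=t_{31}$, with $x+y+z$ central. Applying $z^*$ yields $\Phi(x',y')\Phi(y',z')\Phi(z',x')=1$ with $(x',y',z')=(t_{31},t_{23},z^*t_{31})$. The step I expect to be the main obstacle is showing that $z^*t_{31}$ equals $t_{12}$ modulo central terms, so that $x'+y'+z'$ is again central and the relation becomes a permuted instance of \eqref{GRT:H}. The permutation is odd, so I would combine \eqref{GRT:H} with \eqref{GRT:I} to rewrite it in the required cyclic order.

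For that computation I will use the residue relations carefully rather than rely on the $(01)$ formula alone. This is where sign conventions such as $z_3^*(H_{1,2})=-H_{1,2}-I_2$ must be tracked. Any leftover $t_{ii}$ are central and are absorbed by the centrality remark above.
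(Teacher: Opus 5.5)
Your proposal is correct and is essentially the paper's proof: you compute the action on $t_{12},t_{23},t_{13}$ by the $(01)$-formula \eqref{cyclic structure on RCD}, discard the central terms $t_{ii}$ and $t_{12}+t_{23}+t_{13}$, and recognize the images of \eqref{GRT:I} and \eqref{GRT:H} as instances of the same relations, using \eqref{GRT:I} to handle the odd reordering in the hexagon. Your starting instance $\Phi(t_{12},t_{23})\Phi(t_{23},t_{31})\Phi(t_{31},t_{12})=1$ is ordered exactly as \eqref{GRT:H} prescribes, whereas the paper's \eqref{H'} has its last two factors interchanged, so on this point your version is the cleaner one.

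One caution: the literal index shift $t_{ij}\mapsto t_{i+1,j+1}$ in the spherical presentation does not give the formulas you quote. Modulo the center it swaps $t_{12}\leftrightarrow t_{23}$ and fixes $t_{13}$; your formulas are those of $(01)^*$, which is what the paper actually uses. This does not affect your argument, since any permutation of $\{t_{12},t_{23},t_{13}\}$ modulo the center carries instances of \eqref{GRT:I} and \eqref{GRT:H} to instances.
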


\begin{proof}
For the involution relation \eqref{GRT:I}, we compute \[z^* \Phi(t_{12},t_{23}) = \Phi(-t_{12}-t_{22}-t_{32},t_{23}) = \Phi(-t_{12}-t_{32},t_{23}) = \Phi(t_{31},t_{23}) = \Phi(t_{23},t_{31})^{-1} = z^*\Phi(t_{23},t_{12})^{-1},\]
where we use the fact that $t_{ii}$ and $t_{12}+t_{23}+t_{31}$ are central elements of $\fft_3$. For the hexagon relation \eqref{GRT:H}, we first note the following: 
\begin{itemize}
    \item $z^* \Phi(t_{12},t_{23}) = \Phi(t_{31},t_{23})$.
    \item $z^*\Phi(t_{13},t_{12}) = \Phi(-t_{13}-t_{23}-t_{33}, t_{31}) = \Phi(-t_{13}-t_{23}, t_{31}) = \Phi(t_{12}, t_{31})$.
    \item $z^* \Phi(t_{23},t_{13}) = \Phi(t_{23},-t_{13}-t_{23}-t_{33}) = \Phi(t_{23}, t_{12})$.
\end{itemize}
Now, applying the action $z^*$ on the hexagon equation $\eqref{GRT:H}$ with $x = t_{31}, y = t_{12}$ 
    \begin{equation} \label{H'}
        \Phi(t_{31}, t_{12}) \Phi(t_{23}, t_{31}) \Phi(t_{12}, t_{23}) = 1
    \end{equation}
We get
    \begin{equation*}
        \begin{aligned}
	z^* \left( \Phi(t_{31}, t_{12}) \Phi(t_{23}, t_{31}) \Phi(t_{12}, t_{23})\right) = \Phi(t_{12}, t_{13}) \Phi(t_{23}, t_{12}) \Phi(t_{31}, t_{23})
	 = \Phi(t_{13}, t_{12})^{-1} \Phi(t_{12}, t_{23})^{-1} \Phi(t_{23}, t_{31})^{-1} \\
	 = \Phi(t_{13}, t_{12})^{-1} \left(\Phi(t_{23}, t_{31})  \Phi(t_{12}, t_{23})\right)^{-1}
     =  \Phi(t_{13}, t_{12})^{-1} \Phi(t_{31},t_{12})
      =  1 =  z^*(1).
    \end{aligned}
    \end{equation*}

Here, the second equality uses $\eqref{GRT:I}$ and the fourth uses $\eqref{H'}$.
\end{proof}

To prove cyclic invariance of the pentagon relation \eqref{GRT:P}, it is convenient to replace the usual form of the pentagon by an equivalent $5$-cycle relation. This reformulation, due to Furusho \cite{Furusho_pentagon}, takes place in the completed enveloping algebra of the framed sphere braid Lie algebra $\mathfrak{fB}_5$ and is better adapted to the cyclic action.

\begin{definition}
The \emph{framed sphere braid} Lie algebra $\mathfrak{fB}_n$ is a degree completed free Lie algebra generated by symbols $\{X_{ij} = X_{ji}, 1\leq i \leq j\leq n\}$, with relations 
\begin{equation}\label{equ:spherebraid_rel}
\begin{aligned}
[X_{ij}, X_{kl}] &= 0 \quad \text{for $\{i,j\}\cap\{k,l\}=\emptyset$}, \\
[X_{ij},X_{ki}+X_{kj}] &= 0  \quad  \text{for distinct $i,j,k$}, \\ 
[X_{ij}, X_{kk}] &= 0 \quad \text{for any $i,j$, and $k$}, \\
\sum_{j = 1}^{n} X_{ij} &=  0 \quad \text{for $1 \leq i \leq n$}. 
\end{aligned}
\end{equation}    
\end{definition}

The Lie algebra $\mathfrak{fB}_n$ is a framed version of $\mathfrak{B}_n$ introduced in Ihara \cite[Section 5.3]{ihara_braids}. There is a natural surjection $ \mathfrak{ft}_n \rightarrow \mathfrak{fB}_{n+1}$ that sends $t_{ij} \mapsto X_{ij}$ for all $1 \leq i \leq j \leq n$. The surjection map induces a morphism $U(\mathfrak{ft}_n) \rightarrow U(\mathfrak{fB}_{n+1})$. 

A useful way to understand the cyclic symmetry of the pentagon is through its reformulation as a $5$-cycle relation on the moduli space $\mathcal{M}_{0,5}$ of genus zero curves with five marked points. The point is that these moduli spaces carry a natural cyclic symmetry, coming from the cyclic ordering of the marked points, and so they provide a more natural setting in which to study the pentagon relation from the perspective of cyclic operads; see Kimura-Stasheff-Voronov \cite{Voronov_1995_cyclicModuli}. As pointed out by Ihara \cite[Section 5.3]{ihara_braids}, the pentagon equation for $\GT_{\kk}$ in $\PB_4(\kk)$,
\begin{equation}\label{GT:P}
    \Phi(x_{12}, x_{23}) \Phi(x_{12}x_{13}, x_{24}x_{34}) \Phi(x_{23}, x_{34})
    =
    \Phi(x_{13}x_{23}, x_{34})  \Phi(x_{12}, x_{23}x_{24}),
\end{equation}
is equivalent to the $5$-cycle relation
\[
\Phi(x_{12},x_{23})  \Phi(x_{34},x_{45})  \Phi(x_{51},x_{12})  \Phi(x_{23},x_{34})  \Phi(x_{45},x_{51}) = 1.
\]

Similarly, the pentagon relation \eqref{GRT:P} for $\GRT_{\kk}$ is equivalent to the following $5$-cycle relation in $\hat U(\mathfrak{fB}_5)$:
\[
\Phi(X_{12}, X_{23})  \Phi(X_{34}, X_{45})  \Phi(X_{51}, X_{12})  \Phi(X_{23}, X_{34})  \Phi(X_{45}, X_{51}) = 1, 
\]
see \cite[Lemma 5]{Furusho_pentagon}. It is this reformulation that is best adapted to the cyclic action, and it is the version we use below to prove invariance of the pentagon relation under $z^*$.


\begin{prop}\label{prop: cyclic GRT on (P)}
The pentagon relation \eqref{GRT:P} of $\GRT_{\kk}$ is preserved under the cyclic action $z^*$.
\end{prop}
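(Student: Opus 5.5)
Following the paper's approach, we work with Furusho's $5$-cycle form rather than the pentagon itself. By \cite[Lemma 5]{Furusho_pentagon}, for $\Phi$ satisfying \eqref{GRT:I} and \eqref{GRT:H}, the pentagon \eqref{GRT:P} in $\exp(\ft_4)$ is equivalent to
\[
\Phi(X_{12}, X_{23})\Phi(X_{34}, X_{45})\Phi(X_{51}, X_{12})\Phi(X_{23}, X_{34})\Phi(X_{45}, X_{51}) = 1
\]
in $\hat U(\mathfrak{fB}_5)$. By Remark~\ref{remark: cyc framed DK}, the cyclic action $z_5^*$ on arity-four diagrams becomes, after adjoining the generators $t_{0i}$ and using the residue relation \eqref{eq:residue relation}, the relabelling of the five spherical points $0,1,2,3,4$. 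So the plan is: transport the pentagon to the $5$-cycle form, show that $z^*$ acts on $\mathfrak{fB}_5$ (modulo central framing terms) as the cyclic shift $X_{ij}\mapsto X_{i+1\,j+1}$ with indices mod $5$, and then observe that the $5$-cycle relation is invariant under this shift up to conjugation.

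\textbf{Step 1: identify $z^*$ on the generators.} Using \eqref{cyclic structure on RCD}, compute $z^*$ on the generators $t_{ij}$ of $\fft_4$, identifying index $0$ with the fifth point. As in the proof of Proposition~\ref{prop: cyclic GRT on (H) and (I)}, each $t_{1j}$ goes to $-\sum_k t_{kj}$. This expression equals the missing generator $X_{0j}$ up to the central term $t_{jj}$, and the residue relation $\sum_j X_{ij}=0$ in $\mathfrak{fB}_5$ is exactly what makes the identification work. The central $t_{ii}$ (respectively $X_{ii}$) terms can then be dropped inside $\Phi$, since $\Phi(x+c,y)=\Phi(x,y+c)=\Phi(x,y)$ for $c$ central; this is the fact already used in Proposition~\ref{prop: cyclic GRT on (H) and (I)}. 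The outcome is that $z^*X_{ij}=X_{\sigma(i)\sigma(j)}$ for the $5$-cycle $\sigma$.

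\textbf{Step 2: apply the shift to the $5$-cycle word.} Apply $\sigma$ to each factor. The word $\Phi(X_{12},X_{23})\Phi(X_{34},X_{45})\Phi(X_{51},X_{12})\Phi(X_{23},X_{34})\Phi(X_{45},X_{51})$ has its factors indexed by the consecutive pairs $(12,23),(34,45),(51,12),(23,34),(45,51)$, which step by $2$ around $\mathbb Z/5$. Shifting by one produces the same cyclic sequence of factors, read from a different starting point. Equivalently, the shifted word is a conjugate of the original word by some of its factors. Since the original word equals $1$, so does any cyclic rotation of it, and hence the shifted word equals $1$.

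\textbf{Step 3: return to the pentagon.} Apply Furusho's equivalence in the reverse direction to conclude that $z^*$ applied to \eqref{GRT:P} holds, so that $z^*$ preserves the relation.

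\textbf{Main obstacle.} The hard part is Step 1: checking that the surjection $\ft_4\to\mathfrak{fB}_5$, equivalently the framed-to-spherical passage, intertwines $z^*$ with the index shift. Two sub-points need care. First, the factors of \eqref{GRT:P} have arguments such as $t_{12}+t_{13}$, which must map to single generators $X_{ij}$ modulo central terms. Second, one must confirm that Furusho's lemma holds in the framed algebra $\mathfrak{fB}_5$, and not only in Ihara's $\mathfrak{B}_5$. For the latter I would argue that the framing generators are central and split off, using $\fft_n=\bigoplus_i\kk t_{ii}\oplus\ft_n$. This reduces the claim to the unframed statement, where the index-shift behaviour of $z^*$ is the standard $\Sigma_5$-symmetry of $\mathcal M_{0,5}$.
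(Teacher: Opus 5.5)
Your argument is correct, but its decisive step differs from the paper's.

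The paper applies the transposition formula \eqref{cyclic structure on RCD} factor by factor to both sides of the pentagon and pushes everything into $\hat U(\mathfrak{fB}_5)$. It then simplifies each factor by hand, using the residue relation and 4T-type commutations (e.g.\ $\Phi(X_{52},X_{23}+X_{24})=\Phi(X_{52},X_{15})$). Finally it recognizes the resulting identity as the $(1\,5)$-relabelled $5$-cycle relation.

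You instead read off from Remark~\ref{remark: cyc framed DK} that all of $\Sigma_4^+$ acts on $\mathfrak{fB}_5$ by relabelling the five points. With $P_i:=\Phi(X_{i,i+1},X_{i+1,i+2})$ (indices mod $5$), the word $P_1P_3P_5P_2P_4$ is then sent to $P_2P_4P_1P_3P_5=(P_1P_3P_5)^{-1}(P_1P_3P_5P_2P_4)(P_1P_3P_5)$.

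What your route buys:
\begin{itemize}
\item It needs no factor-by-factor simplification.
\item It handles $z_5$ directly. The paper's computation really treats the generator $(01)$, which together with $\Sigma_4$ suffices.
\item It explains why the $5$-cycle form is the adapted one: the relation is literally rotation invariant.
\item It works verbatim for the word with steps of $-2$, so it does not depend on the order in which the factors of \eqref{GRT:P} are written.
\end{itemize}

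What the paper's route buys is an explicit record of where each factor goes. Both arguments, however, rest on the same ingredient as your Step 1: relabellings are automorphisms of $\mathfrak{fB}_5$. Indeed, $z^*$ acts on $\hat U(\fft_4)$ by an algebra automorphism, so it carries the identity \eqref{GRT:P} to a valid identity. Your rotation argument and the paper's relabelled relation are both concrete witnesses of this fact.

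A few small corrections:
\begin{itemize}
\item The formula $t_{1j}\mapsto-\sum_k t_{kj}$ describes $(01)^*$, not $z_5^*$. You obtain $z_5$ by composing $(0\,1)$ with a $4$-cycle in $\Sigma_4$.
\item With the residue relation as the paper states it (sum including $X_{jj}$), one has $-\sum_{k=1}^4 X_{kj}=X_{5j}$ exactly. So $(01)^*$ is precisely the relabelling $(1\,5)$, and no central terms need dropping in Step 1.
\item The rewriting of arguments such as $t_{12}+t_{13}$ belongs to Furusho's lemma, not to Step 1. The correction terms there (e.g.\ $X_{14}+X_{15}+X_{45}$) only commute with the two arguments of $\Phi$; they are not central in $\mathfrak{fB}_5$.
\item Step 3 needs $\fft_4\to\mathfrak{fB}_5$ to be injective. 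It is in fact an isomorphism, and the paper uses this tacitly as well.
\end{itemize}
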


\begin{proof}
Using \cite[Lemma 5]{Furusho_pentagon}, the pentagon equation \eqref{GRT:P} is equivalent to the following in $\hat{U}(\mathfrak{fB}_5)$:
\begin{equation}
\Phi(X_{12}, X_{23})  \Phi(X_{34}, X_{45})  \Phi(X_{51}, X_{12})  \Phi(X_{23}, X_{34})  \Phi(X_{45}, X_{51}) = 1
\end{equation}
Equivalently, using $\Phi(x,y)^{-1} = \Phi(y,x)$                   
\begin{equation}\label{eqn: GRT equivalent (P)}
   \Phi(X_{12}, X_{51})  \Phi(X_{45},X_{34}) = \Phi(X_{23}, X_{34})  \Phi(X_{45}, X_{51})  \Phi(X_{12}, X_{23})  
\end{equation}
Applying the cyclic action $z^*$ using the formula \eqref{cyclic structure on RCD} on the left side of \eqref{GRT:P}, we get 
\[\Phi(-t_{12}-t_{22}-t_{32}-t_{42},t_{23}+t_{24})  \Phi(-t_{13}-t_{23}-t_{33}-t_{43} +t_{23},t_{34}),\]
which is equivalent to \[\Phi(-X_{12}-X_{22}-X_{32}-X_{42},X_{23}+X_{24})  \Phi(-X_{13}-X_{23}-X_{33}-X_{43}+X_{23},X_{34})\] in $\mathfrak{fB}_5$. Now, using the relations $\sum_{j=1}^{5} X_{ij} = 0$ gives the following
\begin{equation}\label{eqn: LHS of (P)}
    \Phi(X_{52},X_{23}+X_{24})  \Phi(X_{23}+X_{53},X_{34}).
\end{equation}

We note that the first term of $\eqref{eqn: LHS of (P)}$ is \[\Phi(X_{52},X_{23}+X_{24}) = \Phi(X_{52},-X_{21}- X_{25}-X_{22}) = \Phi(X_{52},X_{15})\] because $[X_{52}, X_{51}+X_{52}+X_{12}] = 0 = [X_{15}, X_{15} + X_{25} + X_{12}]$ and $[X_{15},X_{22}] = 0 = [X_{52},X_{22}]$. 

Similarly, the second term of $\eqref{eqn: LHS of (P)}$ is
\[\Phi(X_{23}+X_{53},X_{34}) = \Phi(-X_{13}- X_{33} - X_{43},X_{34}) = \Phi(X_{41}, X_{34})\]
because $[X_{41}, X_{13}+X_{33}+X_{34}+X_{14}] = 0 = [X_{41}, X_{13}+X_{33}+X_{34}+X_{14}]$.

Finally, the cyclic action on the left side of $\eqref{eqn: GRT equivalent (P)}$ reduces to 
\begin{equation}\label{eqn: final LHS of (P)}
    \Phi(X_{52},X_{15})  \Phi(X_{41}, X_{34}).
\end{equation}

Now, we apply the cyclic action on the right side of \eqref{GRT:P}. We get 
\[\Phi(t_{23},t_{34})  \Phi(-t_{12}-t_{22}-t_{32}-t_{42}-t_{13}-t_{23}-t_{33}-t_{43}, t_{24}+t_{34})  \Phi(-t_{12}-t_{22}-t_{32}-t_{42},t_{23})\]
Similarly, taking the image of the last equation in $\mathfrak{fB}_5$ and using the relation $\sum_{j=1}^{5} X_{ij} = 0$ gives 
\begin{equation}\label{eqn: RHS of (P)}
    \Phi(X_{23},X_{34})  \Phi(X_{52}+X_{53},X_{24}+X_{34})  \Phi(X_{52},X_{23}). 
\end{equation} 

We now observe that the middle term of \eqref{eqn: RHS of (P)} reduces to 
\begin{align*}
  \Phi(X_{52}+X_{53},X_{24}+X_{34}) &\overset{(1)}{=} \Phi(-X_{51} - X_{54} - X_{55}, -X_{14} - X_{44} - X_{54}) \\ 
    &\overset{(2)}{=} \Phi(-X_{51} - X_{54} - X_{55}, X_{51}) \\ 
    &\overset{(3)}{=}  \Phi(X_{41}, X_{15}).  
\end{align*}

Here, the equality $(1)$ uses the relation from $\mathfrak{fB}_5$, $(2)$ follows from $[X_{51}, X_{15}+ X_{41} + X_{54} + X_{44}] = 0 = [X_{54} +X_{15}, X_{15}+ X_{41}+ X_{54} + + X_{44}]$ and for the last equality $(3)$, we use the vanishing Lie brackets $[X_{15}, X_{41}+X_{54}+X_{15} + X_{55}] = 0 =  [X_{41}, X_{41} + X_{54} + X_{15}+ X_{55}]$.

Finally, the cyclic action on the right-hand side of $\eqref{eqn: GRT equivalent (P)}$ is the following
\begin{equation}\label{eqn:  final RHS of (P)}
    \Phi(X_{23},X_{34})  \Phi(X_{41},X_{15})  \Phi(X_{52},X_{23}). 
\end{equation}

Using the following permuted pentagon obtained by applying permutation (1\,5) on \eqref{eqn: GRT equivalent (P)}, 
\begin{equation}
   \Phi(X_{52}, X_{15})  \Phi(X_{41},X_{34}) = \Phi(X_{23}, X_{34})  \Phi(X_{41}, X_{15})  \Phi(X_{52}, X_{23})  
\end{equation}

Therefore, equations $\eqref{eqn: final LHS of (P)}$ and $\eqref{eqn:  final RHS of (P)}$ are equal.
\end{proof}

\begin{thm}\label{thm: GRT is cyclic PaRCD}
    $\GRT_{\kk} \cong \Aut_{\Cyc}^{+}(\hPaRCD^{\cyc})$.
\end{thm}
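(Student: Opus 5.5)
The plan is to realize the isomorphism as the composite of the known operadic identification $\GRT_{\kk} \cong \Aut^{+}_{\Op}(\hPaRCD)$ with the forgetful map $\Aut_{\Cyc}^{+}(\hPaRCD^{\cyc}) \to \Aut^{+}_{\Op}(\hPaRCD)$. Since a cyclic operad morphism is by definition an operad morphism commuting with $z^*$, this forgetful map is an injective group homomorphism. It therefore suffices to prove surjectivity: for every $(\lambda,\Phi)\in\GRT_{\kk}$, the operad automorphism $g$ of \eqref{eq: assignment of PaRCD to PaRCD} commutes with the cyclic action.

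Because $\hPaRCD$ is generated under categorical and operadic composition by $I_1$, $H_{1,2}$, $X_{1,2}$ and $A_{1,2,3}$, and the compatibility formula \eqref{cylic_formula} lets $z^*$ be computed on composites from its values on the factors, I will reduce to checking $z^*g(Y)=g(z^*Y)$ on these four generators. This is the same reduction used in Lemma~\ref{lemma: maps PaRB to PaRCD}. For $I_1$ and $X_{1,2}$ the check is immediate, since $z^*$ fixes them and $g$ only rescales $I_1$. For $H_{1,2}$, linearity of $z^*$ on $\fft_2$ gives
\[ z^*g(H_{1,2})=\lambda(-H_{1,2}-I_2)=g(-H_{1,2}-I_2)=g(z^*H_{1,2}). \]

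The substantive case is $A_{1,2,3}$. Here I need
\[ z_4^*\bigl(\Phi(t_{12},t_{23})A_{1,2,3}\bigr)=g(A_{2,3,1}^{-1})=A_{2,3,1}^{-1}\Phi(t_{23},t_{31})^{-1}. \]
I will follow the computation in Lemma~\ref{lemma: maps PaRB to PaRCD}. First, $z^*\Phi(t_{12},t_{23})=\Phi(t_{31},t_{23})$ holds by the centrality of $t_{ii}$ and of $t_{12}+t_{23}+t_{31}$, as in Proposition~\ref{prop: cyclic GRT on (H) and (I)}. Then conjugating by $A_{2,3,1}$ and applying \eqref{GRT:I} matches the two sides. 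Doing this requires exactly the relations \eqref{GRT:I} and \eqref{GRT:H}, whose cyclic invariance is Proposition~\ref{prop: cyclic GRT on (H) and (I)}.

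The main obstacle is well-definedness. A relation among generators in $\hPaRCD(4)$, namely the pentagon, must be sent by $z^*g$ and by $gz^*$ to the same thing. Equivalently, $g\circ z^*$ must be a well-defined operad map, which requires that the image of the pentagon relation \eqref{GRT:P} under $z^*$ still holds for $\Phi$. This is exactly Proposition~\ref{prop: cyclic GRT on (P)}, proved via Furusho's 5-cycle form. The Furusho step is the delicate part: the absence of a finite presentation of $\PaRCD$ means agreement on generators is not formally enough, so I must argue that $z^*gz^{*-1}$ and $g$ are operad automorphisms that agree on generators.

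To do this, I will observe that $z^*gz^{*-1}$ is again an object-fixing operad automorphism. It is therefore determined by some $(\lambda',\Phi')\in\GRT_{\kk}$ via \eqref{eq: assignment of PaRCD to PaRCD}, and the generator computations above force $(\lambda',\Phi')=(\lambda,\Phi)$. Propositions~\ref{prop: cyclic GRT on (H) and (I)} and \ref{prop: cyclic GRT on (P)} ensure that $\Phi'$ satisfies the defining relations, and the injectivity of the operadic identification then yields $z^*g=gz^*$. Finally, I will check that composition of cyclic automorphisms corresponds to the group law $\ast$ on $\GRT_1\rtimes\kk^\times$, which is inherited from the operadic isomorphism.
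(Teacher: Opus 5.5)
Your generator checks and your appeal to Propositions~\ref{prop: cyclic GRT on (H) and (I)} and~\ref{prop: cyclic GRT on (P)} match the paper's proof. However, the step you make carry the conclusion is circular. You assert that $h:=z^*g(z^*)^{-1}$ is again an object-fixing operad automorphism, and hence comes from some $(\lambda',\Phi')\in\GRT_{\kk}$; nothing justifies this. The first case of \eqref{cylic_formula} gives $(z^*)^{-1}(x\circ_j y)=(z^*)^{-1}(x)\circ_{j+1}y$ for $x\in\calO(n)$ and $1\le j\le n-1$. Therefore
\[
h(x\circ_j y)=h(x)\circ_j g(y),
\]
not $h(x)\circ_j h(y)$. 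Likewise, $h$ is a priori equivariant only for a conjugate of $\Sigma_n$ inside $\Sigma_n^{+}$, not for $\Sigma_n$ itself. So $h$ is an operad morphism exactly when $h=g$, that is, exactly when $g$ is already cyclic. You cannot feed $h$ into $\GRT_{\kk}\cong\Aut^{+}_{\Op}(\hPaRCD)$ without assuming the theorem. (Even if you could, appealing to the propositions to show that $\Phi'$ satisfies the $\GRT_{\kk}$ relations would be redundant.)

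The step is motivated by a misplaced well-definedness worry. A presentation is needed only to \emph{construct} a map from prescribed values on generators. Here $g$ is already given by the identification $\GRT_{\kk}\cong\Aut^{+}_{\Op}(\hPaRCD)$, which you take as known, and $z^*$ is the given cyclic structure. So $z^*g$ and $gz^*$ are already well-defined maps, and the pentagon in arity $4$ cannot obstruct them. (Note also that $gz^*$ is not, and need not be, an operad map.) What remains is only uniqueness. By \eqref{cylic_formula}, functoriality, and the fact that $g$ is a $\Sigma_n$-equivariant operad morphism, the morphisms on which $g$ commutes with the $\Sigma^{+}$-actions form a sub-cyclic operad. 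Hence compatibility on the generators $I_1,H_{1,2},X_{1,2},A_{1,2,3}$ suffices, and no presentation is needed. This is exactly the reduction in your second paragraph, and it is the one the paper uses, as in Lemma~\ref{lemma: maps PaRB to PaRCD}. You should delete the final paragraph and the claim that agreement on generators is insufficient. Your generator computations, together with the two propositions the paper also cites, then give surjectivity of the forgetful map, and the group law is inherited.
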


\begin{proof}
An element $(\lambda, \Phi) \in \GRT_{\kk}$ uniquely determines an operad morphism $g:\PaRCD_{\kk} \rightarrow \hPaRCD$ defined by the assignment ~\eqref{eq: assignment of PaRCD to PaRCD}. We need to show that $z^*(g(-)) = g(z^*(-))$ in the generating morphisms $I_{1}, H_{1,2}, X_{1,2}$ and $A_{1,2,3}$ of $\PaRCD_{\kk}$ and for every $\Phi \in \GRT_{\kk}$, the defining relations \eqref{GRT:H}, \eqref{GRT:I}, and \eqref{GRT:P} of $\GRT_{\kk}$ preserve the cyclic action.  

The first step is as follows. It is clear that $z^*(g(X_{1,2})) = g(z^*(X_{1,2}))$. For $I_1$, we have $z_2^* \cdot g(I_{1}) = z_2^* \cdot \lambda I_{1} = \lambda I_{1} = g(z_2^* \cdot I_{1})$. For $H_{1,2}$, we have $z_2^* \cdot g(H_{1,2}) = z_2^* \cdot \lambda H_{1,2} =  \lambda (-H_{1,2}-I_2) =  g(-H_{1,2} -I_{2}) = g(z_2^* \cdot H_{1,2})$. For $A_{1,2,3}$, $z_4^* \cdot g(A_{1,2,3}) =  g(z_4^* \cdot A_{1,2,3})$ is verbatim to the arguments of $z_4^* \cdot f(\alpha_{1,2,3}) =  f(z_4^* \cdot \alpha_{1,2,3})$ from Lemma~\ref{lemma: maps PaRB to PaRCD}.

Proposition~\ref{prop: cyclic GRT on (H) and (I)} and Proposition~\ref{prop: cyclic GRT on (P)} together show that the required relations are compatible $z^*(g(-)) = g(z^*(-))$ under cyclic action $z^*$. 

\end{proof}

\begin{remark}
The spherical residue condition $\sum_{i=0}^n t_{ij} = 0$ for all $0 \leq j \leq n$ of $\fft_{n}$ is used by Willwacher \cite{willwacher2024cyclic} to compute the homotopy automorphism space of the cyclic BV Hopf cooperad, in particular it was shown that  $\GRT_{\kk} \cong \HoAut_{\mathrm{CycHopfOp^{c}}}(\mathsf{BV}^{c})$, where the Batalin-Vilkovisky cooperad $\mathsf{BV}^{c}$ is seen as a cyclic dg Hopf cooperad. Theorem~\ref{thm: GRT is cyclic PaRCD} can be seen as a strict $0$-truncation of Willwacher's result \cite[Corollary 1.5]{willwacher2024cyclic}. Therefore, these two approaches are complementary where one uses homotopy automorphism and the other discrete automorphisms. Theorem~\ref{thm: GRT is cyclic PaRCD} is also the graded version of the main result in the prounipotent setting of Robertson and the author \cite{RS2025cyclicgt}.
\end{remark}

\begin{remark}
Furusho showed in \cite{Furusho_pentagon} that the pentagon equation \eqref{GRT:P} implies the involution \eqref{GRT:I} and hexagon equations \eqref{GRT:H} of Definition~\ref{Def:GRT}. Accordingly, for the purpose of proving compatibility with the cyclic symmetry, it would in principle suffice to establish Proposition~\ref{prop: cyclic GRT on (P)}. We record Proposition~\ref{prop: cyclic GRT on (H) and (I)} as well, since the argument is short and makes the cyclic action more transparent.

\end{remark}

Since $\GT_{\kk}$ acts freely and transitively from the left on the set $\Assoc_{\kk}$ of Drinfeld associators, while $\GRT_{\kk}$ acts freely and transitively from the right, and these actions commute. Thus, the triple $(\GT_{\kk},\Assoc_{\kk},\GRT_{\kk})$ is a bitorsor.

\begin{prop}\label{prop: bitorsor triple}
The triple
\[
(\Aut^{+}_{\Cyc}(\hPaRB^{\cyc}), \Iso^{+}(\hPaRB^{\cyc}, \hPaRCD^{\cyc}), \Aut^{+}_{\Cyc}(\hPaRCD^{\cyc}))
\]
is a bitorsor.
\end{prop}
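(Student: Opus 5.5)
The plan is to transport the classical bitorsor structure of $(\GT_{\kk},\Assoc_{\kk},\GRT_{\kk})$ along the identifications already established. On the right, Theorem~\ref{thm: GRT is cyclic PaRCD} gives $\GRT_{\kk}\cong \Aut^{+}_{\Cyc}(\hPaRCD^{\cyc})$. In the middle, Proposition~\ref{prop: bijection of cyclic associators} gives a bijection $\assoc_{\kk}\leftrightarrow \Cyc^{+}(\hPaRB^{\cyc},\hPaRCD^{\cyc})=\Iso^{+}(\hPaRB^{\cyc},\hPaRCD^{\cyc})$. On the left, we use the cyclic $\GT$ result of \cite{RS2025cyclicgt}, which gives $\GT_{\kk}\cong \Aut^{+}_{\Cyc}(\hPaRB^{\cyc})$.

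The actions on the cyclic side are the obvious ones: pre-composition by cyclic automorphisms of $\hPaRB^{\cyc}$ and post-composition by cyclic automorphisms of $\hPaRCD^{\cyc}$. A composite of cyclic operad morphisms is again cyclic and object-fixing, and associativity of composition makes the two actions commute. So $\Iso^{+}(\hPaRB^{\cyc},\hPaRCD^{\cyc})$ is automatically a left $\Aut^{+}_{\Cyc}(\hPaRB^{\cyc})$-set and a right $\Aut^{+}_{\Cyc}(\hPaRCD^{\cyc})$-set with commuting actions.

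It remains to prove that each action is free and transitive, and here we argue directly rather than through the torsor property of $\assoc_{\kk}$. The set is nonempty, since Drinfeld's associator exists by Proposition~\ref{prop: bijection of cyclic associators}. Given $F,F'\in\Iso^{+}$, the composite $F^{-1}\circ F'$ is an object-fixing operad automorphism of $\hPaRCD$. It commutes with $z^*$ because both $F$ and $F'$ do, and the inverse of a cyclic isomorphism is cyclic. Hence $F'=F\circ(F^{-1}F')$ with $F^{-1}F'\in\Aut^{+}_{\Cyc}(\hPaRCD^{\cyc})$, which gives transitivity. Freeness holds because $F\circ g=F$ forces $g=\id$, as $F$ is invertible. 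The same argument with $F'\circ F^{-1}$ handles the left action.

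For completeness we then check that these abstract torsor structures agree with the classical ones under the bijections above. Under Proposition~\ref{prop: bijection of cyclic associators} and the assignment \eqref{eq: assignment of PaRCD to PaRCD}, composing $F_{(\mu,\varphi)}$ with $g_{(\lambda,\Phi)}$ has to be computed on the generators $\tau,\beta,\alpha$. We must confirm that it yields the associator obtained from the standard right $\GRT_{\kk}$-action, with twisted product formula $\varphi(\Phi\text{-conjugated } t_{12},t_{23})\Phi$. The analogous check is needed on the $\GT$ side.

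The main obstacle is this compatibility of group laws and actions. It reduces to the non-cyclic statements $\GRT_{\kk}\cong\Aut^{+}_{\Op}(\hPaRCD)$, \cite[Theorem 10.3.10]{FresseBook1}, and its $\GT$ analogue, combined with the fact that the framing parts ($\tau\mapsto e^{\lambda t_{11}/2}I$, $I\mapsto\lambda I$) compose by multiplication of scalars $\lambda$. Since the cyclic groups are subgroups with full image, by Theorem~\ref{thm: GRT is cyclic PaRCD} and \cite{RS2025cyclicgt}, the classical bitorsor isomorphism restricts to the stated one.
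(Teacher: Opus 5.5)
Your proposal is correct, but its core argument differs from the paper's.

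The paper transports the classical operadic bitorsor $(\Aut^{+}_{\Op}(\hPaRB),\Iso^{+}(\hPaRB,\hPaRCD),\Aut^{+}_{\Op}(\hPaRCD))$ along \cite[Theorem 7.7]{RS2025cyclicgt}, Proposition~\ref{prop: bijection of cyclic associators} and Theorem~\ref{thm: GRT is cyclic PaRCD}. Each cyclic term is identified with its operadic counterpart and simply inherits the actions; this is essentially your final paragraph.

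Your main argument is more formal. Object-fixing cyclic isomorphisms are closed under composition and inversion, so the set of isomorphisms between two isomorphic objects is automatically a bitorsor under their automorphism groups. The only real input is nonemptiness, which comes from Drinfeld's existence theorem together with Proposition~\ref{prop: bijection of cyclic associators}.

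What this buys:
\begin{enumerate}
\item It makes the proposition independent of Theorem~\ref{thm: GRT is cyclic PaRCD} and \cite[Theorem 7.7]{RS2025cyclicgt}.
\item Combined with \eqref{eq: iso between cyc associator and associator}, it even recovers $\Aut^{+}_{\Cyc}(\hPaRCD^{\cyc})=\Aut^{+}_{\Op}(\hPaRCD)$ formally. For cyclic $F$ and any $g\in\Aut^{+}_{\Op}(\hPaRCD)$, the isomorphism $g\circ F$ is cyclic, hence so is $g=(g\circ F)\circ F^{-1}$.
\end{enumerate}
What the formal argument alone does not give is the identification of the triple with $(\GT_{\kk},\Assoc_{\kk},\GRT_{\kk})$. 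That is what the paper's transport supplies. Your compatibility check is needed only for this refinement, not for the proposition as stated.

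One notational point: your composites are read left to right, so $F^{-1}\circ F'$ means apply $F^{-1}$ first. In the usual convention the automorphism of $\hPaRCD$ is $F'\circ F^{-1}$. It is the left-to-right reading that places $\Aut^{+}_{\Cyc}(\hPaRB^{\cyc})$ on the left and $\Aut^{+}_{\Cyc}(\hPaRCD^{\cyc})$ on the right, as in the statement; with the usual convention one would have to act through inverses.
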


\begin{proof}
The classical operadic description of Drinfeld associators yields the bitorsor
\begin{equation}\label{eqn:bitorser triple of operads}
(\Aut^{+}_{\Op}(\hPaRB), \Iso^{+}(\hPaRB, \hPaRCD), \Aut^{+}_{\Op}(\hPaRCD));
\end{equation}
see, for example, \cite{FresseBook1}. By \cite[Theorem 7.7]{RS2025cyclicgt}, Proposition~\ref{prop: bijection of cyclic associators}, and Theorem~\ref{thm: GRT is cyclic PaRCD}, the three terms in \eqref{eqn:bitorser triple of operads} identify, respectively, with
\[
\Aut^{+}_{\Cyc}(\hPaRB^{\cyc}), \qquad
\Iso^{+}(\hPaRB^{\cyc}, \hPaRCD^{\cyc}), \qquad
\Aut^{+}_{\Cyc}(\hPaRCD^{\cyc}).
\]
Transporting the left and right actions along these identifications gives the claimed bitorsor structure.
\end{proof}

Equivalently, the standard bitorsor $(\GT_{\kk},\Assoc_{\kk},\GRT_{\kk})$ is identified with the cyclic-operadic bitorsor
\[
(\Aut^{+}_{\Cyc}(\hPaRB^{\cyc}), \Iso^{+}(\hPaRB^{\cyc}, \hPaRCD^{\cyc}), \Aut^{+}_{\Cyc}(\hPaRCD^{\cyc})).
\]


\section{Action on Chord Diagrams} \label{sec: Tangles and Chord diagrams}

We conclude with an application of the main theorem to chord diagrams. Chord diagram categories provide the natural target (the associated graded) in which one computes universal finite type invariants of braids, tangles, and related objects; see, for example, Kassel--Turaev \cite{KT_Tangles}. The main result (Theorem~\ref{prop: GRT action on metric prop}) of this section is that the cyclic operadic action of $\GRT_{\kk}$ on $\hPaRCD^{\cyc}$ induces an action on the corresponding category of parenthesized chord diagrams with self-dual objects.


\subsection{Infinitesimal Symmetric Monoidal Categories}
Infinitesimal symmetric monoidal categories encode the first-order behaviour of a deformation of a symmetric monoidal category into a braided one. They provide the linearized framework in which chord diagram categories naturally arise, and hence form the appropriate algebraic background for universal finite type invariants.


Let $(\bS,\otimes,\mathbb{1},c)$ be a strict symmetric $\kk$-linear category and let $\bS[[\hbar]]$ be the symmetric monoidal category which has the same objects as those of $\bS$ but whose morphism sets are defined by \[ \Hom_{\bS[[\hbar]]}(X,Y) \coloneqq \Hom_{\bS}(X,Y) \otimes_{\kk} \kk[[\hbar]], \] for all objects $X, Y \in \bS$. That is, $\bS[[\hbar]]$ is the category obtained by extending the morphisms of $\bS$ to formal power series in a deformation parameter $\hbar$. An \emph{infinitesimal braiding} on $\bS$ is a natural family of endomorphisms
\[
t_{X,Y}\colon X\otimes Y\to X\otimes Y
\]
such that
\[
t_{X,Y}=c_{X,Y}\circ t_{Y,X}\circ c_{X,Y}^{-1}
\]
and
\[
t_{X\otimes Y,Z}
=
(\id_X\otimes t_{Y,Z})
+
(\id_X\otimes c_{Y,Z})\circ (t_{X,Z}\otimes \id_Y)\circ (\id_X\otimes c_{Y,Z})^{-1}.
\]
These conditions ensure that $t$ is the first-order part of a braiding on the formal deformation $\bS[[\hbar]]$, namely
\[
c_{X,Y}\Bigl(\id_{X\otimes Y}+\frac{\hbar}{2}t_{X,Y}\Bigr);
\]
see \cite{Cartier1992Vessiliev_Invarients}. A \emph{strict infinitesimal symmetric monoidal category} is a strict symmetric $\kk$-linear category equipped with an infinitesimal braiding. Equivalent reformulations, as well as the non-strict analog, may be found in \cite[Proposition.~B.1]{KT_Tangles} and \cite[Section~10.3.3]{FresseBook1}. We say that $\bS$ has (left) \emph{duals} if, for every object $X \in \bS$, there exists an object $X^*$ together with morphisms
\[
b_X : \mathbb{1} \to X \otimes X^*, \qquad d_X : X^* \otimes X \to \mathbb{1}
\]
such that
\begin{equation}\label{eq: left dual zig zag equations}
(b_X \otimes \id_X) (\id_X \otimes d_X)  = \id_X, \qquad (\id_{X^*} \otimes b_X) (d_X \otimes \id_{X^*})  = \id_{X^*}.
\end{equation}
For any morphism $f: X \lrar Y$ in $\bS$ with duals, the transpose of $f$ is a morphism $f^*: Y^* \lrar X^*$ given by $f^* = (\id_{Y^*}\otimes b_X)(\id_{Y^*}\otimes f\otimes\id_{X^*}) (d_Y\otimes\id_{X^*})$ determined by the duality pairing $(b,d)$ (cf. \cite[Chapter XIV.2]{kassel_quantum_1995}).

\subsection{The category $\bA(\kk)$}
The category $\bA(\kk)$ may be viewed as the infinitesimal symmetric monoidal analog of the tangle category, with morphisms given by chord diagrams; see \cite[Section 5.3]{KT_Tangles}. Its objects are parenthesized words in the signed set $\{+,-\}$ ( the free magma on $\{+,-\}$). For objects $p,q\in \bA(\kk)$, the morphism space $\Hom_{\bA(\kk)}(p,q)$ is the $\kk$-vector space spanned by chord diagrams from $p$ to $q$ with the corresponding parenthesized boundary data, modulo the $4T$ relation (Figure~\ref{fig:4T relation}):
\[
\Hom_{\bA(\kk)}(p,q)
=
\mathrm{Span}_{\kk}\{\text{parenthesized chord diagrams from $p$ to $q$}\}/4T.
\]

Composition is given by vertical stacking and the tensor product by concatenation on objects and horizontal juxtaposition on morphisms. The unit object is the empty word $\emptyset$. For each word $p$, the infinitesimal braiding is given by endomorphisms
\[
t^p_{ij}\in \Hom_{\bA(\kk)}(p,p),
\]
represented by a single chord joining the $i$th and $j$th strands of the identity diagram on $p$. Duality is given by morphisms
\[
b_p:\emptyset\to p\otimes p^*,
\qquad
d_p:p\otimes p^*\to \emptyset,
\]
where $p^*$ is the reversed word with all signs changed. In this way, $\bA(\kk)$ becomes an infinitesimal symmetric monoidal category with duals.


\begin{figure}[ht!]
    \centering
$\begin{array}{cc}
    \tikz{\draw[thick] (0,0.25) -- (0,1.75); \node at (0,0) {i}; \draw[thick] (1,0.25) -- (1,1.75); \node at (1,0) {j};  \draw[thick] (2,0.25) -- (2,1.75); \node at (2,0) {k}; \draw[thick, dotted] (0,0.75) -- (1,0.75); \draw[thick, dotted] (0,1.4) -- (2,1.4);} 
\end{array} \, + \,
\begin{array}{cc}
     \tikz{\draw[thick] (0,0.25) -- (0,1.75); \node at (0,0) {i}; \draw[thick] (1,0.25) -- (1,1.75); \node at (1,0) {j};  \draw[thick] (2,0.25) -- (2,1.75); \node at (2,0) {k}; \draw[thick, dotted] (0,0.75) -- (1,0.75); \draw[thick, dotted] (1,1.4) -- (2,1.4);} 
\end{array} \, - \, 
\begin{array}{cc}
     \tikz{\draw[thick] (0,0.25) -- (0,1.75); \node at (0,0) {i}; \draw[thick] (1,0.25) -- (1,1.75); \node at (1,0) {j};  \draw[thick] (2,0.25) -- (2,1.75); \node at (2,0) {k}; \draw[thick, dotted] (0,1.45) -- (1,1.45); \draw[thick, dotted] (1,0.75) -- (2,0.75);} 
\end{array} \, - \, 
\begin{array}{cc}
    \tikz{\draw[thick] (0,0.25) -- (0,1.75); \node at (0,0) {i}; \draw[thick] (1,0.25) -- (1,1.75); \node at (1,0) {j};  \draw[thick] (2,0.25) -- (2,1.75); \node at (2,0) {k}; \draw[thick, dotted] (0,1.45) -- (1,1.45); \draw[thick, dotted] (0,0.75) -- (2,0.75);} 
\end{array} \, = \, 0 $
    \caption{The 4T relation}
    \label{fig:4T relation}
\end{figure}


\begin{thm}[\cite{Cartier1992Vessiliev_Invarients, KT_Tangles}]
The category $\bA(\kk)$ is the free infinitesimal symmetric monoidal category with duals generated by a single object. \qed
\end{thm}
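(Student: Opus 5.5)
The statement is the classical universality theorem of Cartier and Kassel--Turaev. I would prove it by exhibiting $\bA(\kk)$ as a presented category and checking the universal property directly. Fix an infinitesimal symmetric monoidal category $\bS$ with duals and an object $V\in\bS$. The aim is to construct a unique strict structure-preserving functor $F:\bA(\kk)\to\bS$ with $F(+)=V$. The non-strict, parenthesized version then follows by the usual coherence argument, as in \cite[Section~10.3.3]{FresseBook1}.

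The first step is the definition of $F$ on objects. Set $F(+)=V$ and $F(-)=V^*$, and extend over parenthesized words using $\otimes$ and the associativity constraints. This is forced by monoidality and by the duality data.

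The second step is the definition of $F$ on morphisms. Every chord diagram decomposes into elementary pieces by vertical stacking and horizontal juxtaposition. The pieces are:
\begin{itemize}
\item identity strands;
\item crossings $c$;
\item caps and cups $b_\pm, d_\pm$;
\item single chords $t_{ij}$.
\end{itemize}
Each piece has a forced image: $c_{V,V}$ and its variants, the duality maps $b_V,d_V$, and $t_{V,V}$ conjugated by symmetries. A chord between non-adjacent strands is sent to $t_{V,V}$ conjugated by permutations. A chord whose end lies on a $-$ strand is sent to the transpose $t^*$, built from $b,d$ as in the text. Uniqueness is then immediate, since these pieces generate all morphisms.

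The third step, and the main point, is well-definedness: the images must respect every relation holding among diagrams in $\bA(\kk)$. The relations fall into three groups.
\begin{itemize}
\item \textbf{Isotopy of the underlying unoriented tangle.} These are the symmetric-group relations, the zig-zag relations \eqref{eq: left dual zig zag equations}, and naturality of $c$ with respect to caps and cups. They hold in any symmetric monoidal category with duals.
\item \textbf{Sliding a chord endpoint along a strand.} This covers sliding past crossings, through caps and cups, and commuting chords with disjoint support. Sliding past crossings is the naturality of $t$ together with $t_{X,Y}=c\,t_{Y,X}c^{-1}$. Sliding through caps and cups is naturality of $t$ applied to $b$ and $d$, which in particular gives $t_{V^*,V}\circ d$-compatibility. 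Commuting chords with disjoint support is the interchange law for $\otimes$.
\item \textbf{The 4T relation.} This follows by expanding $t_{X\otimes Y,Z}$ in the two ways allowed by the second infinitesimal-braiding axiom and comparing. Equivalently, it is the relation $[t_{ij},t_{ik}+t_{jk}]=0$, obtained by applying naturality of $t$ to the morphism $t_{V,V}\otimes\id_V$.
\end{itemize}

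I expect the main obstacle to be completeness: showing that these local moves generate all equalities between chord diagrams, which is a Reidemeister-type theorem for diagrams with chords. I would first reduce, via the Morse-position decomposition of tangles, to the standard presentation of the free symmetric monoidal category with duals on one object, namely Brauer-type diagrams. I would then argue that chords add only the slide moves and 4T, because chords may be pushed into a normal form adjacent to the bottom boundary.

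Conversely, I would check that $\bA(\kk)$ itself satisfies the axioms. The $t^p_{ij}$ satisfy the two infinitesimal-braiding identities by summing over chord endpoints, and $b_p,d_p$ satisfy the zig-zag relations by isotopy. Together with the functor $F$ constructed above, this establishes freeness.
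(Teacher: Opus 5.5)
The paper gives no argument of its own here; it simply cites Cartier and Kassel--Turaev. Your outline (forced images of the generators, then checking relations) is the standard route of those sources. As written, however, two steps do not go through.

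\textbf{The converse direction.} You treat it as routine, but it carries the real content. To say that $\bA(\kk)$ is an infinitesimal symmetric monoidal category, you must show that $t_{p,q}$ is \emph{natural} in $p$ and $q$ with respect to arbitrary chord diagrams. The two identities and the zig-zags are not enough. Naturality is exactly where 4T is used on the $\bA(\kk)$ side, and it forces orientation signs that your ``summing over chord endpoints'' omits. Take $p=(-,+)$, $q=(+)$ and the cap $d\colon p\to\varnothing$. Naturality gives $(d\otimes\id_q)\circ t_{p,q}=t_{\varnothing,q}\circ(d\otimes\id_q)=0$. But after composing with $d$, the chord joining strands $1,3$ and the chord joining strands $2,3$ are the \emph{same} chord diagram (slide the endpoint over the cap). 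Hence $t_{p,q}$ must be the signed sum $\sum\varepsilon_i\varepsilon_j\,t_{ij}$ over strands $i$ of $p$ and $j$ of $q$, where $\varepsilon$ denotes the letter signs. This is consistent with your second step. There, a chord touching a $-$ strand goes to a partial transpose of $t_{V,V}$, and naturality of $t$ at $d_V$, together with the vanishing of $t$ on $\mathbb{1}$, shows that this transpose equals $-t_{V,V^*}$. With the unsigned sum, $F$ does not preserve the infinitesimal braiding and $\bA(\kk)$ fails the axioms. You need to define $t_{p,q}$ with signs and prove the locality lemma: for $f\colon p\to p'$, the signed sum of chords from a strand of $q$ to the strands met by a horizontal level of $f$ does not depend on the level. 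Passing a chord of $f$ uses 4T; passing a local extremum of the skeleton cancels two identical terms of opposite sign; passing a crossing is trivial.

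\textbf{Completeness.} Beyond being unproved, the proposed normal form is not available for every diagram. Components of the skeleton with both ends on the top boundary, and closed circles, never reach the bottom. On a circle only the cyclic order of the chord endpoints is invariant, and the relation required in $\bS$ is cyclicity of the trace. A Reidemeister-type argument is unnecessary, since the skeleton is unknotted. Instead, use coherence for compact closed categories (Kelly--Laplaza, in its graphical form with generating morphisms). Chord diagrams before 4T are exactly the string diagrams of the free symmetric monoidal category with duals on an object $V$ and a morphism $t\colon V\otimes V\to V\otimes V$, modulo $c\,t\,c^{-1}=t$. Well-definedness of $F$ then reduces to $c\,t_{V,V}\,c^{-1}=t_{V,V}$ and 4T in $\bS$. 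The latter follows from naturality of $t_{V\otimes V,V}$ at $t_{V,V}\otimes\id_V$, as in your second formulation, not from expanding $t_{X\otimes Y,Z}$ in two ways.
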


This means that for any infinitesimal symmetric monoidal category $\bS$ with duals and any object $X \in \bS$, there is a unique monoidal functor $G: \bA(\kk) \to \bS$ such that $G(+) = X$ and  preserves the infinitesimal braiding and duality structure:
\[
G(-) = X^*, \quad G(t_{+,+}) = t_{X,X}, \quad G(c_{+,+}) = c_{X,X}, \quad G(\alpha_{+,+,+}) = \alpha_{X,X,X}, \quad G(b_{+}) = b_X, \quad G(d_{+}) = d_X.
\]

We now introduce the operads needed to compare the category of chord diagrams with $\hPaRCD$.

We begin by defining a framed version of the operad $\hA$ from~\cite[Section~10.3.1]{FresseBook1}, which encodes infinitesimal structures on symmetric monoidal categories. The operad $\hA$ is the universal enveloping-algebra analogue of the chord diagram operad: it is defined similarly to $\hPaCD$, but before restricting to group-like elements.

\begin{definition}
For each $n\geq 1$, let $\hA(n)$ be the groupoid with one object and endomorphism algebra
\[
\Hom_{\hA(n)}(*,*)=\hat U(\mathfrak{ft}_n),
\]
the degree-completed universal enveloping algebra of the framed Drinfeld--Kohno Lie algebra $\mathfrak{ft}_n$. The collection
\[
\hA=\{\hA(n)\}_{n\geq 1}
\]
forms an operad in complete Hopf groupoids, with operadic composition induced from that of $\mathfrak{ft}_n$; see Definition~\ref{def:DK Lie algebra}.
\end{definition}

Its parenthesized refinement is obtained by pullback along the map $w\colon \magma\to \ob(\hA)$, i.e. $\hPaA:=w^*\hA.$ Equivalently, $\hPaA(n)$ has object set $\magma(n)$, and for any $p,q\in \magma(n)$,
$\Hom_{\hPaA(n)}(p,q)=\hat U(\mathfrak{ft}_n)$,
with composition given by multiplication in $\hat U(\mathfrak{ft}_n)$.

As with the operad $\PaRCD$, the operad $\hPaA$ is generated under categorical and operadic composition by the four elements
\[
\textsf{X}_{1,2}\in \Hom_{\hPaA(2)}((12),(21)), \qquad
\textsf{H}_{1,2}\in \Hom_{\hPaA(2)}((12),(12)),
\]
\[
\textsf{I}_1\in \Hom_{\hPaA(1)}(1,1), \qquad \text{and} \qquad
\textsf{A}_{1,2,3}\in \Hom_{\hPaA(3)}(((12)3),(1(23))).
\]
These correspond respectively to the symmetry, infinitesimal braiding, identity, and associativity in an infinitesimal symmetric monoidal category. The following theorem records the corresponding universal property; see \cite[Theorem~10.3.4]{FresseBook1}.


The operad $\hPaA$ is generated by the unit, symmetry, associator, and infinitesimal braiding, and the relations among these generators are precisely the coherence relations for an infinitesimal symmetric monoidal category. In particular, maps out of $\hPaA$ are determined by the corresponding structural data. The following proposition is a consequence of \cite[Theorem~10.3.4]{FresseBook1}.

\begin{prop}\label{prop: hPaA-structure-is-inf-sym}
Let $\bC$ be a complete $\kk$-linear category. Then giving a map of operads
\[
\rho\colon \hPaA\to \End(\bC)
\]
is equivalent to equipping $\bC$ with the structure of an infinitesimal symmetric monoidal category.
\end{prop}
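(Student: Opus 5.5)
The proof will be a direct translation of \cite[Theorem~10.3.4]{FresseBook1} to the framed setting. The framed data $t_{ii}$ are central in $\mathfrak{ft}_n$, so they impose no coherence beyond the unframed case. The plan has two directions, followed by a check that they are mutually inverse.

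\textbf{From $\rho$ to an infinitesimal symmetric monoidal structure.} Given an operad map $\rho\colon \hPaA\to \End(\bC)$, evaluating $\rho$ on the object $(12)\in\magma(2)$ gives a functor $\otimes\colon \bC\times\bC\to\bC$, and $\rho(1)$ gives the identity functor. The generators then give natural transformations:
\begin{itemize}
\item $\rho(\textsf{A}_{1,2,3})$ is the associativity constraint $(X\otimes Y)\otimes Z\to X\otimes(Y\otimes Z)$;
\item $\rho(\textsf{X}_{1,2})$ is the symmetry $c_{X,Y}$;
\item $\rho(\textsf{H}_{1,2})$ is the infinitesimal braiding $t_{X,Y}$.
\end{itemize}
Naturality of these transformations holds because $\End(\bC)$ consists of functors and natural transformations.

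Next I would verify that the axioms hold. Each axiom is the image under $\rho$ of an identity that already holds in $\hPaA$, so it suffices to check these identities inside $\hat U(\mathfrak{ft}_n)$:
\begin{itemize}
\item the pentagon for $\textsf{A}$ holds because all composites of parenthesization morphisms equal $1$ in $\hat U(\mathfrak{ft}_4)$;
\item the hexagons and $c^2=\id$ follow the same way;
\item the symmetry relation $t_{X,Y}=c\,t_{Y,X}\,c^{-1}$ is $t_{12}=t_{21}$;
\item the additivity relation for $t_{X\otimes Y,Z}$ is the composition rule $t_{12}\circ_1 0=t_{13}+t_{23}$.
\end{itemize}

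\textbf{From a structure to $\rho$.} Conversely, given an infinitesimal symmetric monoidal structure, I would assign the structure maps to the generators $\textsf{I},\textsf{X},\textsf{H},\textsf{A}$. I would then extend this assignment to all of $\hPaA$ in three steps:
\begin{enumerate}
\item Each $\hat U(\mathfrak{ft}_n)$ is generated topologically by the $t_{ij}$. Each $t_{ij}$ is expressed operadically from $\textsf{H}$, conjugated by $\textsf{X}$ and $\textsf{A}$, exactly as in the proof of Lemma~\ref{lemma: maps PaRB to PaRCD} where $t_{23}=A H_{2,3}A^{-1}$.
\item The Drinfeld--Kohno relations must be sent to identities. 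The relation $[t_{ij},t_{kl}]=0$ follows from naturality plus functoriality of $\otimes$. The relation $[t_{ij},t_{ik}+t_{jk}]=0$ follows from naturality of $t$ with respect to $t_{X,Y}$ itself, combined with additivity.
\item The framing relations $[t_{ii},-]=0$ follow from naturality once $t_{ii}$ is interpreted as the image of $\textsf{I}$, i.e.\ a natural endomorphism of the identity functor.
\end{enumerate}

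\textbf{Main obstacle.} I expect the difficult step to be well-definedness: showing that the extension does not depend on the chosen operadic word, since no explicit presentation of $\PaCD$ is known. Here I would reduce to Fresse's argument directly rather than seek a presentation. His Theorem~10.3.4 identifies $\hPaA$ (unframed) with the operad freely generated by these structures, via Mac Lane coherence: any two parenthesization paths agree, so each hom-space reduces to $\hat U(\ft_n)$ acting on a fixed parenthesization.

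The framed case then follows because $\hat U(\mathfrak{ft}_n)\cong \hat U(\ft_n)\hat\otimes \kk[[t_{11},\dots,t_{nn}]]$ with the $t_{ii}$ central, and $\textsf{I}$ is sent to a natural endomorphism of the identity functor. Finally, completeness of $\bC$ ensures that formal power series in these morphisms converge, so $\rho$ extends continuously to the completion. The two constructions are mutually inverse by construction on generators.
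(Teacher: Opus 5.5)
Your route is the same as the paper's: its proof is a bare appeal to \cite[Theorem~10.3.4]{FresseBook1} on the generators $\textsf{I},\textsf{X},\textsf{H},\textsf{A}$. The gap is in your treatment of the framed generator $\textsf{I}$, i.e.\ $t_{11}\in\hat U(\mathfrak{ft}_1)=\kk[[t_{11}]]$. You assert two things: that because the $t_{ii}$ are central they ``impose no coherence beyond the unframed case'', and that the framed case follows from the splitting $\hat U(\mathfrak{ft}_n)\cong\hat U(\ft_n)\hat\otimes\kk[[t_{11},\dots,t_{nn}]]$. But centrality and this splitting concern each algebra separately. The splitting is not compatible with operadic composition, because (in the paper's normalization) $t_{11}\circ_1 0=t_{11}+t_{22}+2t_{12}$ has a component in $\ft_2$. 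Applying an operad map $\rho$ to this identity shows that $\theta:=\rho(\textsf{I})\colon\id_{\bC}\Rightarrow\id_{\bC}$ must satisfy
\[
\theta_{X\otimes Y}=\theta_X\otimes\id_Y+\id_X\otimes\theta_Y+2\,t_{X,Y}.
\]
This axiom appears nowhere in your list and does not follow from the unframed ones.

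This breaks both directions of your argument. From a structure to $\rho$: an infinitesimal symmetric monoidal category in the paper's sense consists of $\otimes$, $c$, the associativity constraint and $t$, and contains no natural endomorphism of $\id_{\bC}$. Your step~3 simply posits one, and the obvious choice $\textsf{I}\mapsto 0$ is consistent with the identity above only when $t=0$. From $\rho$ to a structure: you keep $\rho(\textsf{A}),\rho(\textsf{X}),\rho(\textsf{H})$ and discard $\rho(\textsf{I})$, which they do not determine, since $\theta$ can be shifted by any additive natural endomorphism of $\id_{\bC}$. For example, on graded vector spaces with hom-spaces tensored with $\kk[[\hbar]]$ and $t=0$, both $\theta=0$ and $\theta=\hbar\deg$ work. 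So the two constructions are not mutually inverse. The statement as written holds only after you add to the structure an infinitesimal twist $\theta$ satisfying the displayed identity; the paper's citation, which calls $\textsf{I}$ ``unit'' data, does not address this either. With that datum your argument goes through: $\rho(t_{ii})$ is $\theta$ on the $i$-th input, $[t_{ii},-]=0$ follows from naturality of $\theta$ and of $t$, and compatibility with $t_{ii}\circ_i 0$ follows from the twist identity together with additivity of $t$.

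Separately, well-definedness is not the real obstacle. Each $\hat U(\mathfrak{ft}_n)$ has the explicit Drinfeld--Kohno presentation, so you can define $\rho$ arity by arity on the $t_{ij}$. Independence of the conjugating structural isomorphism then comes from Mac Lane coherence and naturality of $t$, so no operadic presentation of $\PaCD$ is needed.
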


\begin{proof}
By \cite[Theorem~10.3.4]{FresseBook1}, an operad map $\rho\colon \hPaA\to \End(\bC)$ is determined by the images of the generators $\textsf{I},$ $\textsf{X},$ $\textsf{H},$ and $\textsf{A},$ subject to the defining relations of $\hPaA$. These are exactly the unit, symmetry, infinitesimal braiding, and associator data, together with the coherence relations for an infinitesimal symmetric monoidal category. Hence such an operad map is equivalent to an infinitesimal symmetric monoidal structure on $\bC$.
\end{proof}

The operad $\hPaRCD$ is obtained from $\hPaA$ by passing from the infinitesimal braiding to its exponential. More precisely, the generators $\textsf{A}$, $\textsf{I}$, and $\textsf{X}$ are the same in both operads, while the infinitesimal braiding $\textsf{H}_{1,2}$ in $\hPaA$ is replaced by its exponential $e^{\textsf{H}_{1,2}}$, which is group-like. In this sense, $\hPaRCD$ governs the integrated, or exponentiated, form of an infinitesimal symmetric monoidal structure. 

\begin{cor}\label{cor: maps out of prounipotent PaRCD}
Let $\bC$ be a complete $\kk$-linear category. Then a morphism of operads
\[
\rho\colon \hPaRCD \to \End(\bC[[\hbar]])
\]
determines a infinitesimal symmetric monoidal structure on $\bC[[\hbar]]$.
\end{cor}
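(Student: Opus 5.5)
The plan is to deduce the corollary from Proposition~\ref{prop: hPaA-structure-is-inf-sym} by comparing $\hPaRCD$ with $\hPaA$. The key observation is that, in each arity, the morphism sets of $\hPaRCD(n)$ are the group-like elements $\exp(\mathfrak{ft}_n)\subset \hat U(\mathfrak{ft}_n)$, while those of $\hPaA(n)$ are all of $\hat U(\mathfrak{ft}_n)$. Both operads are pulled back along the same map $w\colon\magma\to *$, and their partial compositions are induced by the same Lie operad structure on $\{\mathfrak{ft}_n\}$. So there is an inclusion of operads $\hPaRCD\hookrightarrow\hPaA$ that is the identity on objects.

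First I will show that a map $\rho\colon\hPaRCD\to\End(\bC[[\hbar]])$ extends uniquely to a map $\tilde\rho\colon\hPaA\to\End(\bC[[\hbar]])$ of operads in complete $\kk$-linear categories. Morphisms of $\End(\bC[[\hbar]])$ form complete $\kk[[\hbar]]$-modules, so on each hom-set $\rho$ is a group homomorphism $\exp(\mathfrak{ft}_n)\to\Aut$ into complete filtered algebras. This extends linearly and continuously to $\hat U(\mathfrak{ft}_n)$, because the group-like elements span a dense subspace of $\hat U(\mathfrak{ft}_n)$. Concretely, $x=\frac{d}{ds}\big|_{s=0}e^{sx}$ formally, and for rational $s$, $\log$ applied to group-likes recovers the primitive elements. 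Operadic compatibility of $\tilde\rho$ then follows from that of $\rho$, by density and multilinearity of the compositions.

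Next I will read off the structure. By Proposition~\ref{prop: hPaA-structure-is-inf-sym}, $\tilde\rho$ equips $\bC[[\hbar]]$ with an infinitesimal symmetric monoidal structure. Explicitly, the tensor product comes from $\tilde\rho(12)$, the symmetry from $\rho(X_{1,2})$, and the associativity from $\rho(A_{1,2,3})$. The infinitesimal braiding $t$ comes from $\tilde\rho(\textsf{H}_{1,2})=\log\rho(e^{t_{12}})$, and the unit from $\rho(I_1)$. The coherence relations hold because they are relations in $\hPaA$, and Proposition~\ref{prop: hPaA-structure-is-inf-sym} translates them into the axioms for an infinitesimal braiding.

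The main obstacle is the extension step, in particular making sure that $\log\rho(e^{\textsf{H}_{1,2}})$ is well defined. This needs completeness of $\bC[[\hbar]]$ and the assumption that $\rho(e^{t_{12}})$ is unipotent with respect to the filtration, which holds because $\rho$ respects the degree filtrations. If I cannot get the general density argument to work cleanly, I will instead work directly with generators. I will define $t:=\log\rho(e^{\textsf{H}_{1,2}})$ and verify the two infinitesimal braiding identities directly. The identity $t_{X,Y}=c\,t_{Y,X}\,c^{-1}$ follows by taking $\log$ of the image of $X_{1,2}e^{t_{12}}X_{1,2}^{-1}=e^{t_{21}}$. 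The additivity identity for $t_{X\otimes Y,Z}$ follows by taking $\log$ of the image of $e^{t_{12}}\circ_1 1=e^{t_{13}+t_{23}}$. This is legitimate because $t_{13}$ and $t_{23}$ commute with $t_{13}+t_{23}$ modulo the relations used, and the exponential of a sum of commuting elements factors.
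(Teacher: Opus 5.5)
Your main route is essentially the paper's: the paper states the corollary without a separate proof, deducing it from Proposition~\ref{prop: hPaA-structure-is-inf-sym} via the remark that $\hPaRCD$ is the exponentiated (group-like) form of $\hPaA$, and your continuous extension $\tilde\rho$ spells out that step, relying, as you note, on the implicit hypothesis that $\rho$ respects the filtrations. Do not use the fallback, though: $t_{13}$ and $t_{23}$ do not commute in $\mathfrak{ft}_3$ (the defining relation only makes $t_{12}$ commute with $t_{13}+t_{23}$, and $[t_{13},t_{23}]=-[t_{12},t_{23}]\neq 0$). Hence $e^{t_{13}+t_{23}}\neq e^{t_{13}}e^{t_{23}}$, and the additivity $\log\rho(e^{t_{13}+t_{23}})=\log\rho(e^{t_{13}})+\log\rho(e^{t_{23}})$ genuinely needs the continuity/extension argument of your main route.
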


\subsection{Envelop and Metric Prop}\label{subsec: Envelop and Metric Prop}

Given an operad $\calO$, its envelop $\Env(\calO)$ is the strict symmetric monoidal category characterized by the property that symmetric monoidal functors out of $\Env(\calO)$ are equivalent to $\calO$-algebras. In much of the literature one says that the envelop, $\Env(\calO)$, is the prop associated to $\calO$.

Concretely, $\Env(\calO)$ has object set $\mathbb N$, and its morphisms are given by
\begin{equation}\label{eq:envolepe}
\Env(\calO)(m,n)
:=
\coprod_{m_1+\cdots +m_n=m}
\bigl(\calO(m_1)\otimes\cdots\otimes \calO(m_n)\bigr)\otimes_{\Sigma_{m_1}\times\cdots\times\Sigma_{m_n}}\Sigma_m,
\end{equation}
where the coproduct is taken over all decompositions of $m$ into $n$ parts. Equivalently, there is an adjunction
\[\begin{tikzcd}
\Env : \Op \arrow[r, shift left=1] & \mathbf{Prop} \arrow[l, shift left=1] : u
\end{tikzcd}
\]
with $u(\calP)(n):=\calP(n,1).$ Thus $\Env$ is left adjoint to the forgetful functor from props to operads. In particular, giving an algebra over $\calO$ is equivalent to giving an algebra over $\Env(\calO)$.

\begin{definition}
Let $\calO^{\cyc}$ be a cyclic operad, with underlying operad $\calO$. The \emph{metric prop} $\Pi(\calO^{\cyc})$ is the prop obtained from $\Env(\calO)$ by adjoining morphisms
\[
d\in \Pi(\calO^{\cyc})(2,0)
\qquad \text{and} \qquad
b\in \Pi(\calO^{\cyc})(0,2),
\]
subject to the following relations:
\begin{enumerate}
    \item $(b \otimes \id_1)(\id_1 \otimes d) = (\id_1 \otimes b) (d \otimes \id_1) = \id_1 \in \Pi(\calO^{cyc})(1,1)$. \label{eq:zig-zag}
    \item For any operation $f\in \calO(n)$, $(b\otimes \id_n)(\id_1 \otimes f \otimes \id_1) (\id_1 \otimes d) = z^* f \in \calO^{\text{cyc}}(n)$, where the $z^* f$ is viewed in $\Env(\calO)\subset \Pi(\calO^{\cyc})$.
\end{enumerate}

\end{definition}

An algebra over the metric prop $\Pi(\calO^{\cyc})$ in a symmetric monoidal category $\bE=(\bE, \otimes, \mathbb{1})$ is a strict symmetric monoidal functor
\[
F\colon \Pi(\calO^{\cyc}) \to \bE.
\]
If we write $A:=F(1)$, then the images
\[
F(d)\colon A\otimes A\to \mathbb{1}
\qquad \text{and} \qquad
F(b)\colon \mathbb{1}\to A\otimes A
\]
define a nondegenerate symmetric pairing on $A$. The invariance relation then says that, for every operation $f\in \calO(n)$, the cyclic action $z^*f$ is obtained from $f$ by transporting one input past the pairing. In other words, the cyclic structure is encoded by the transpose operation $f^*: Y^* \lrar X^*$ of a morphism $f: X \lrar Y$ in a tensor category with duals.

We will use the metric prop construction to pass from the operad $\hPaRCD$ to the corresponding tensor category, which we can compare with the chord diagram category $\bA(\kk)$. 

\begin{definition}
We define the category $\bA'(\kk)$ to be the quotient of $\bA(\kk)$ imposing the strict self-duality $(+)^* = +$.    
\end{definition}
Equivalently, $\bA'(\kk)$ is the free infinitesimal symmetric monoidal category with a single self-dual generator. From now on, we are mainly working with the category $\bA'(\kk)$. The following lemma formalizes the relationship between the cyclic operad $\hPaRCD^{\cyc}$ and the category $\bA'(\kk)$:

\begin{lemma}\label{lemma: metric prop of PaRCD}
The metric prop associated with the cyclic operad $\hPaRCD^{\cyc}$ is equivalent, as a tensor category, to the category $\bA'(\kk)$.
\end{lemma}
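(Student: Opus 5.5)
The plan is to build a strict symmetric monoidal functor $\Phi\colon \Pi(\hPaRCD^{\cyc}) \to \bA'(\kk)$ using the universal property of the metric prop, and then to show it is an equivalence by producing an inverse from the universal property of $\bA'(\kk)$. Here the target is the completed, group-like version of $\bA'(\kk)$, so that exponentials of chords make sense.

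First, the functor out of $\Env(\hPaRCD)$. The object $+$ of $\bA'(\kk)$, together with its chord endomorphisms $t_{ij}$, symmetry, and associativity isomorphisms, defines an infinitesimal symmetric monoidal structure in the sense of \S4.1. By Proposition~\ref{prop: hPaA-structure-is-inf-sym}, equivalently Corollary~\ref{cor: maps out of prounipotent PaRCD}, we get an operad map $\hPaRCD \to \End(\bA'(\kk))$. Concretely:
\begin{itemize}
\item the generators $X_{1,2}$, $A_{1,2,3}$ go to the crossing and the associator on parenthesized words in $+$;
\item $H_{1,2}$ goes to the chord $t_{12}$, and an element $e^{x}$ goes to its exponential;
\item $I_1$ goes to the framing element $t_{11}$, realised in $\bA'(\kk)$ as an isolated chord with both ends on one strand.
\end{itemize}
By the adjunction $\Env \dashv u$, this extends to $\Env(\hPaRCD) \to \bA'(\kk)$, sending $n$ to the word $+^{\otimes n}$ with its parenthesization. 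We extend to $\Pi$ by sending $d \mapsto d_+$ and $b \mapsto b_+$, which is possible because $(+)^*=+$. The zig-zag relation~\eqref{eq:zig-zag} is then exactly~\eqref{eq: left dual zig zag equations}.

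The main obstacle is the invariance relation (2): for every operation $f$, its transpose $f^*$ computed with $(b_+,d_+)$ must equal the image of $z^*f$. Both sides are compatible with categorical composition, and $z^*$ intertwines operadic composition by~\eqref{cylic_formula}. So I would check the relation only on the generators $I_1$, $H_{1,2}$, $X_{1,2}$, $A_{1,2,3}$. The case of $X$ and $I$ is immediate: a crossing bent around a cap is a crossing, and an isolated chord on one strand is invariant. For $H_{1,2}$, transposing strand 1 around the cup moves the chord endpoint onto the neighbouring strand. Then the $4T$/sliding relation across a cap, applied to the cyclic word, gives $-t_{12}-t_{22}$; this is precisely $z_3^*(H_{1,2})=-H_{1,2}-I_2$. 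This step is where the residue relation~\eqref{eq:residue relation} of Remark~\ref{remark: cyc framed DK} is realised diagrammatically. For $A_{1,2,3}$, bending the first strand of the associator around gives $A^{-1}_{2,3,1}$, as a purely parenthesization-level statement. I expect the $H$ case to need the most care: one must verify that the framing correction $t_{22}$ appears with the right coefficient, consistent with our normalization where $t_{ii}$ rather than $t_{ii}/2$ is used.

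For the inverse, $\Pi(\hPaRCD^{\cyc})$ is an infinitesimal symmetric monoidal category: by Proposition~\ref{prop: hPaA-structure-is-inf-sym} it has generator $1$, duality $(b,d)$, and $1^*=1$. The universal property of $\bA'(\kk)$ as the free such category on a self-dual generator (the Cartier/Kassel--Turaev theorem, passed to the quotient) therefore gives a functor $\Psi\colon \bA'(\kk)\to \Pi(\hPaRCD^{\cyc})$ with $+\mapsto 1$. Both composites $\Psi\Phi$ and $\Phi\Psi$ fix the generating data: $\Psi\Phi$ fixes the generators $X,H,I,A,b,d$ of $\Pi$, and $\Phi\Psi$ fixes the structure maps of $\bA'(\kk)$. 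By the respective uniqueness statements, both composites are naturally isomorphic to the identity. The isomorphism is the identity on generating data and is only "natural" up to reparenthesization of objects, which is why the statement is an equivalence rather than an isomorphism.
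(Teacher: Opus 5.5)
Your argument takes a different route from the paper's in both halves. The paper builds the same functor $\phi'$, but it does not check the invariance relation on generators. It reads the bending relation as the image of transposition on the tangle side under an operad isomorphism $\PaRB_{\kk}\to\hPaRCD$. It then imports cyclic invariance from $q\bT'_{\kk}$ \cite[Proposition 6.5]{RS2025cyclicgt}, together with the Kontsevich isomorphism $q\bT'_{\kk}\cong\bA'(\kk)$, and concludes from \eqref{eq:faithful} and the equality of object sets.

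Your direct check on $I$, $X$, $H$, $A$ is self-contained and associator-free, in keeping with the aim of Section~3. The paper's transport handles all operations at once, but at the cost of that external input. Your explicit inverse also addresses something the paper's last line leaves open: faithful plus essentially surjective is not an equivalence without fullness. Moreover, \eqref{eq:faithful} concerns only $\Env(\hPaRCD)$, not the hom-sets of $\Pi(\hPaRCD^{\cyc})$ involving $b$ and $d$.

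For the reduction to generators, you also need that bending with $(b_+,d_+)$ obeys the same rules \eqref{cylic_formula} as $z^*$. It does, by the zig-zag identities, and this is what makes the class of $f$ satisfying relation (2) closed under operadic composition. You state this only for $z^*$.

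The step that fails as written is the construction of $\Psi$. Freeness of $\bA'(\kk)$ is a statement about $\kk$-linear infinitesimal symmetric monoidal categories, and $\Pi(\hPaRCD^{\cyc})$ is not one:
\begin{itemize}
\item its endomorphism sets are built from the groups $\exp(\mathfrak{ft}_n)$, so morphisms cannot be added;
\item the chord $t_{12}$ is not a morphism there, only $e^{\lambda t_{12}}$ is, so there is no infinitesimal braiding for $\Psi$ to send $t_{+,+}$ to.
\end{itemize}
Proposition~\ref{prop: hPaA-structure-is-inf-sym} does not fill this. It concerns the linear operad $\hPaA$ and characterizes maps out of it; it does not equip $\Pi(\hPaRCD^{\cyc})$ with an infinitesimal braiding. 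In the same vein, your $\Phi$ lands in the group-like part of the completed $\bA'(\kk)$, while $\Psi$ would start from the linear $\bA'(\kk)$. So $\Phi\Psi$ and $\Psi\Phi$ are not composable as stated.

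The repair is to run the argument for the linear cyclic operad $\hPaA^{\cyc}$:
\begin{enumerate}
\item Construct $\Pi(\hPaA^{\cyc})\to\widehat{\bA'(\kk)}$ as you did.
\item Obtain $\Psi\colon\bA'(\kk)\to\Pi(\hPaA^{\cyc})$ from freeness and extend it to the completion by continuity. Freeness requires the infinitesimal braiding to be natural with respect to the adjoined $b$ and $d$; this is where the invariance relation $z^*H=-H-I$ is used.
\item Conclude that both composites are the identity.
\item Only then restrict to group-like elements on both sides, using that $\exp(\mathfrak{ft}_n)$ topologically spans $\hat U(\mathfrak{ft}_n)$.
\end{enumerate}
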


\begin{proof}

The algebra over the operad $\hPaRCD$ is the same as the algebra over the prop $\Env(\hPaRCD)$. We get a tensor functor $\phi: \hPaRCD \lrar \bA(\kk)$ defined by sending generators \[ \phi(1) = +, \quad \phi(H_{1,2}) = t_{+,+}, \quad \phi(I) = I_{+}, \quad \phi(X_{1,2}) = X_{+,+}, \quad \phi(A_{1,2,3}) = A'_{+,+,+}, \] where $t_{+,+}$, $I_{+}$, $X_{+,+}$ and $\alpha_{+,+,+}$ represent the infinitesimal braiding, the framed element, symmetry and associativity morphisms in $\bA(\kk)$. The coherence axioms of infinitesimal symmetric monoidal categories ensure that all relations $\PaRCD$ are preserved. Moreover, we have 
\begin{equation}\label{eq:faithful}
    \Hom_{\Env(\hPaRCD)}(p,q) \cong \Hom_{\bA(\kk)}(\phi(p),\phi(q)).
\end{equation}
The functor $\phi$ extends to a functor of props $\phi': \Pi(\hPaRCD^{\cyc}) \lrar \bA'(\kk)$ by assigning $\phi'(b) = coev$ and $\phi'(d) = ev$, where $coev: \varnothing \lrar (+,+)$ and $ev: (+,+) \lrar \varnothing$. Note that extending $\hPaRCD^{\cyc}$ to its metric prop $\Pi(\hPaRCD^{\cyc})$ forces the the identification $(+)^* = +$. Here $\phi'$ preserves the zig-zag relations \eqref{eq:zig-zag} because it is a part of the duality structure in $\Pi(\hPaRCD^{\cyc})$.
For any operation $g\in \hPaRCD^{\cyc}(n)$, the cyclic equivariance relation \[ (b\otimes \id_n)(\id_1 \otimes g \otimes \id_1) (\id_1 \otimes d) = z^* g \in \hPaRCD^{\cyc}(n)\] is the image of the transpose,
$f^*$ of $f \in \PaRB^{\cyc}_{\kk}$ under the operad isomorphism $\PaRB_{\kk} \lrar \hPaRCD$. Since the cyclic invariance is preserved in $q\bT'_{\kk}$ due to \cite[Proposition 6.5]{RS2025cyclicgt}, it is also preserved in $\bA'(\kk)$ because $q\bT'_{\kk} \cong \bA'(\kk)$.

Since $\phi'$ is faithful by \eqref{eq:faithful}, and essentially surjective because both categories have the same objects.

\end{proof}



\subsection{GRT action on Chord diagrams}\label{subsec: GRT action on the category A(K)}

There is an action of $\GT_{\kk}$ on the metric prop generated by the cyclic operad $\PaRB^{\cyc}$ \cite{RS2025cyclicgt}, and from Section~\ref{sec: cyclic chord diagrams} the cyclic operad isomorphisms $\hPaRB^{\cyc} \lrar \hPaRCD^{\cyc}$ are in bijection with Drinfeld associators. Taking these two facts together, one expects to have an explicit $\GRT_{\kk}$ action on chord diagrams with self-dual objects. In what follows, we explicitly construct an $\GRT_{\kk}$ action on the chord diagrams.

The prounipotent completion functor $(-)_{\kk}:\Op\rightarrow \Op_{\kk}$ is symmetric monoidal, and $(-)_{\kk}$ is applied on operads in each arity. Therefore, the prounipotent completion of the envelop associated with an operad $\calO$ is isomorphic to the envelop associated with the prounipotent completion of an operad $\calO_{\kk}$,
$\operatorname{Env}(\calO)_{\kk}\cong \operatorname{Env}(\calO_{_{\kk}})$.

The following results follow from $\operatorname{Env}(\calO)_{\kk}\cong \operatorname{Env}(\calO_{_{\kk}})$ and the fact that $\Env: \Op \lrar \Prop$ is a left adjoint.

\begin{prop}\label{cor: GRT action on Envelop}
There is an isomorphism of prounipotent groups \[\GRT_{\kk}\cong \Aut^{+}_{\Op}(\hPaRCD) \cong \Aut^{+}_{\Prop}(\operatorname{Env}(\hPaRCD)) \cong \Aut^{+}_{\Prop}({\operatorname{Env}(\PaRCD)_{\kk}}).\]
\end{prop}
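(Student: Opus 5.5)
The plan is to establish the three isomorphisms in the chain separately and compose them.

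The first isomorphism, $\GRT_{\kk}\cong \Aut^{+}_{\Op}(\hPaRCD)$, has already been recorded: it is the assignment \eqref{eq: assignment of PaRCD to PaRCD}, which follows from \cite[Theorem 10.3.10]{FresseBook1}, together with the observation that the framing generators $t_{ii}$ are central. So the real work lies in the second and third isomorphisms.

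For the second isomorphism, I will use the adjunction $\Env \dashv u$ with $u(\calP)(n)=\calP(n,1)$. An object-fixing operad automorphism $g$ of $\hPaRCD$ induces, by functoriality of $\Env$, a prop automorphism $\Env(g)$. This gives a group homomorphism
\[
\Aut^{+}_{\Op}(\hPaRCD)\to \Aut^{+}_{\Prop}(\Env(\hPaRCD)).
\]
Conversely, a prop automorphism $G$ of $\Env(\hPaRCD)$ restricts along the unit $\hPaRCD\to u\Env(\hPaRCD)$ to the arity-$(n,1)$ components. By \eqref{eq:envolepe} with $n=1$, these components are exactly $\hPaRCD(m)$, so $u\Env(\hPaRCD)\cong\hPaRCD$ and the restriction is an operad automorphism. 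The key points to check are the following.
\begin{itemize}
\item $G$ preserves the $(m,1)$ part. This holds because it is a prop map and the $(m,n)$ morphisms decompose as tensor products of $(m_i,1)$ pieces acted on by $\Sigma_m$.
\item $G$ is determined by that part. This holds for the same reason, since $\Env(\hPaRCD)$ is generated as a symmetric monoidal category by the operations in $\hPaRCD$.
\end{itemize}
Together these show that the two maps are mutually inverse. The object-fixing condition transfers directly, because the objects of the envelope in each arity are tensor products of objects of $\hPaRCD$.

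For the third isomorphism, I will use the stated identification $\Env(\calO)_{\kk}\cong \Env(\calO_{\kk})$. Prounipotent completion is applied arity-wise and is symmetric monoidal, so it commutes with the finite tensor products, coproducts and $\Sigma$-coinvariants in \eqref{eq:envolepe}. Applying this with $\calO=\PaRCD$ gives $\Env(\PaRCD)_{\kk}\cong\Env(\hPaRCD)$ as props. Taking object-fixing automorphism groups of both sides then yields the last isomorphism. Each isomorphism in the chain respects the prounipotent group structures, because every automorphism group involved is determined by the same data, namely the pair $(\lambda,\Phi)$ acting on the generators $I,H,X,A$.

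I expect the main obstacle to be the step $\Env(\PaRCD)_{\kk}\cong\Env(\hPaRCD)$. Prounipotent completion of groupoids does not obviously commute with the $\Sigma_m$-coinvariant quotient in \eqref{eq:envolepe}. My first approach would be to note that $\Sigma_m$ acts freely on the summands, so the quotient is a disjoint union of copies and completion can be applied summand by summand. If that fails, I would fall back on the universal property of completion: both sides corepresent prop maps into prounipotent props, and the adjunction $\Env\dashv u$ identifies these with operad maps out of $\PaRCD$ and out of $\hPaRCD$ respectively.
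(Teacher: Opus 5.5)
Your proposal is correct and follows the same route as the paper. The paper takes the first isomorphism from the assignment \eqref{eq: assignment of PaRCD to PaRCD} and derives the other two from the adjunction $\Env\dashv u$ together with $\Env(\calO)_{\kk}\cong\Env(\calO_{\kk})$; your observations that $u\Env(\hPaRCD)\cong\hPaRCD$ (so $\Env$ is fully faithful, which bare left-adjointness does not give) and that the coinvariants in the envelope are free are precisely the details the paper leaves implicit.
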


\begin{cor}
There is an isomorphism of prounipotent metric props associated with the cyclic operad $\PaRCD^{\cyc}$ 
\[{\Pi(\PaRCD^{\cyc})}_{\kk} \cong \Pi({\PaRCD}^{\cyc}_{\kk}).\]
\end{cor}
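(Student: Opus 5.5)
The plan is to compare the two props directly through their presentations rather than computing morphism spaces by hand. First I describe the metric prop $\Pi(\calO^{\cyc})$ as a quotient of a free prop. It is generated by $\Env(\calO)$ together with $b$ and $d$, modulo the zig-zag relations and the invariance relations $(b\otimes\id_n)(\id_1\otimes f\otimes \id_1)(\id_1\otimes d)=z^*f$ for $f\in\calO(n)$. With this in hand, I define a comparison map
\[
\Pi(\PaRCD^{\cyc})_{\kk}\lrar \Pi(\PaRCD^{\cyc}_{\kk}).
\]
The universal property of prounipotent completion, applied to the canonical prop map $\Pi(\PaRCD^{\cyc})\to\Pi(\PaRCD^{\cyc}_{\kk})$, produces it. That canonical map is the identity on $b$ and $d$ and is the completion map on the envelope part.

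Next I restrict the comparison map to the envelope part. There it is the isomorphism $\Env(\PaRCD)_{\kk}\cong \Env(\PaRCD_{\kk})$ already recorded before Proposition~\ref{cor: GRT action on Envelop}. That isomorphism holds because completion is symmetric monoidal and applied arity-wise, so it commutes with the coproducts and $\Sigma$-coinvariants in \eqref{eq:envolepe}. Since $b$ and $d$ are sent to $b$ and $d$, it then remains to check that the relations match on both sides.

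The zig-zag relations involve only $b$, $d$ and identities, so they are untouched by completion. For the invariance relations I use that the cyclic action $z^*$ on $\PaRCD_{\kk}$ is the completion of the action on $\PaRCD$. The action $z^*$ is determined on the generators $I,H,X,A$ and extends continuously, so the invariance relations for $\PaRCD_{\kk}$ are the closure of those for $\PaRCD$. To invert the comparison map, I use the universal property of $\Pi$ as a left-adjoint type construction: a prop map out of $\Pi(\PaRCD^{\cyc}_{\kk})$ is a prop map out of $\Env(\PaRCD_{\kk})$ plus compatible $b,d$. This yields a map back to $\Pi(\PaRCD^{\cyc})_{\kk}$, and the two composites are the identity on generators.

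The main obstacle is that prounipotent completion is a limit-type construction, while $\Pi$ is a colimit (a quotient by relations). I must therefore justify that completion commutes with imposing the zig-zag and invariance relations. I would handle this by working hom-space by hom-space. Every morphism of $\Pi$ can be normalized, using the invariance relation, into a composite of a morphism of $\Env$ followed or preceded by tensor powers of $b$ and $d$. The quotient then only identifies finitely many explicit words in each hom-set, which is compatible with the inverse limit defining completion.
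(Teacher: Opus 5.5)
Your overall route matches the paper's. The paper justifies the corollary in one line, from $\Env(\calO)_{\kk}\cong\Env(\calO_{\kk})$ and the fact that $\Env$ is a left adjoint, and your pair of universal-property maps is an explicit version of that. The gap is in the step you flag yourself. You build the forward map $\Pi(\PaRCD^{\cyc})_{\kk}\to\Pi(\PaRCD^{\cyc}_{\kk})$ from the universal property of completion, which applies only when the target is already complete. (Since $b$ and $d$ are not invertible, ``prounipotent'' must mean completed linearized hom-spaces for a filtration, not Malcev completion of groupoids.) If $\Pi(\PaRCD^{\cyc}_{\kk})$ is the naive quotient of the prop generated by $\Env(\PaRCD_{\kk})$, $b$ and $d$, then its completeness is essentially what the corollary asserts, so the construction is circular here.

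Your normal-form paragraph does not close this gap. The form $(\id\otimes d^{\otimes k})\circ F\circ(\id\otimes b^{\otimes l})$ with $F$ in $\Env$ comes for free, by sliding $b$'s down and $d$'s up via the interchange law. But it is far from unique: inserting a zig-zag raises $k$ and $l$, and the invariance relation is imposed for every $f$ in every arity. So each hom-set carries infinitely many identifications, not finitely many. Finiteness is also not the relevant criterion. For a filtered space $V$ with relation subspace $R$ one has $\widehat{V/R}\cong\hat V/\overline{R}$, so what must be shown is that the relations span a closed subspace of the completed hom-spaces, i.e. that the naive quotient is separated and complete.

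Your diagnosis that completion is ``limit-type'' while $\Pi$ is ``colimit-type'' is also misleading. Completion is a left adjoint (to the forgetful functor from complete objects), so it does commute with presentations, provided the colimit on the completed side is formed in complete props. The clean fix is to define $\Pi(\PaRCD^{\cyc}_{\kk})$ that way, closing up the relation ideals. Both sides then corepresent the same functor on complete props $\mathcal P$: a prop map from $\Env(\PaRCD)_{\kk}\cong\Env(\PaRCD_{\kk})$, together with $b,d$ satisfying the zig-zag and invariance relations. Invariance for $f\in\PaRCD_{\kk}(n)$ follows from invariance for $f\in\PaRCD(n)$ by density and continuity of $z^*$. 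The isomorphism is then formal, and your last paragraph becomes unnecessary. If you want the naive quotient instead, you must prove the closedness statement above, which neither your proposal nor the paper's one-line justification supplies.
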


A Drinfeld associator $\Phi$ is a group-like element of the non-commutative formal power series in the free associative algebra $ \hat{U}(\ff_2) = \kk\langle \langle X,Y\rangle\rangle$. In the category $\bA(\kk)$, the associative constraint $A': (X \otimes Y) \otimes Z \lrar X \otimes (Y \otimes Z)$ through the isomorphism $A$ of $\hPaRCD$ does not satisfy the zigzag identity; however, it satisfies the following conditions \begin{equation}\label{eq: Kontsevich Integral of unknot}
(\id \otimes b)A' (d \otimes \id) = \omega^{-1}, \end{equation} where $\omega$ is a special case of the Kontsevich integral of the unknot (cf. \cite{BN_weak_tangles, Le_Murakami_1996, KT_Tangles}) and is also known as the wheeling element in \cite{BNLT} and \cite{knot_theory_book}. It is independent of the choice of Drinfeld associators, similar to \cite[Theorem 8]{Le_Murakami_1996}.

\begin{thm}\label{prop: GRT action on metric prop}
The $\GRT_{\kk}$-action on the cyclic operad $\PaRCD^{\cyc}$ extends to the metric prop $\Pi(\hPaRCD^{\cyc})$ and hence to $\bA'(\kk)$.
\end{thm}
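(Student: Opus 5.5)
The plan is to build the action on $\Pi(\hPaRCD^{\cyc})$ from its presentation. This prop is $\Env(\hPaRCD)$ with the pairing $d\in\Pi(2,0)$ and copairing $b\in\Pi(0,2)$ adjoined, modulo the zig-zag relations \eqref{eq:zig-zag} and the invariance relation (2). By the universal property of $\Env$ as a left adjoint (Proposition~\ref{cor: GRT action on Envelop}), each element $(\lambda,\Phi)\in\GRT_{\kk}$ already acts on $\Env(\hPaRCD)$ through the operad automorphism $g$ of \eqref{eq: assignment of PaRCD to PaRCD}. By Theorem~\ref{thm: GRT is cyclic PaRCD}, $g$ is a cyclic operad automorphism. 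I will extend $g$ to a prop endomorphism $\tilde g$ of $\Pi(\hPaRCD^{\cyc})$ by setting
\[
\tilde g(d)=d,\qquad \tilde g(b)=b,\qquad \tilde g|_{\Env(\hPaRCD)}=\Env(g).
\]
The scalar part $\lambda$ only rescales $H$ and $I$, so it does not interact with $b$ and $d$. The key steps, in order, are as follows.

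\textbf{Step 1 (zig-zag).} The zig-zag relations involve only $b$, $d$ and identities. Since $\tilde g$ fixes all three, these relations are preserved trivially.

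\textbf{Step 2 (invariance).} For $f\in\hPaRCD(n)$ I must check that
\[
(b\otimes\id_n)(\id_1\otimes \tilde g(f)\otimes\id_1)(\id_1\otimes d)=\tilde g(z^*f).
\]
The left-hand side equals $z^*g(f)$ by relation (2) applied to $g(f)$, and the right-hand side equals $g(z^*f)$. These agree exactly because $g$ commutes with $z^*$, which is the content of Theorem~\ref{thm: GRT is cyclic PaRCD}. It suffices to verify this on the generators $I$, $H$, $X$, $A$, since relation (2) is compatible with operadic composition by the cyclic axiom \eqref{cylic_formula}.

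\textbf{Step 3 (group action).} Since $\tilde g$ is determined by $g$ and by the fixed values on $b$ and $d$, functoriality gives $\widetilde{g_1g_2}=\tilde g_1\tilde g_2$. Each $\tilde g$ is invertible with inverse $\widetilde{g^{-1}}$. This yields a homomorphism $\GRT_{\kk}\to\Aut^+_{\Prop}(\Pi(\hPaRCD^{\cyc}))$.

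\textbf{Step 4 (transport).} I then carry this action along the equivalence $\phi'\colon\Pi(\hPaRCD^{\cyc})\to\bA'(\kk)$ of Lemma~\ref{lemma: metric prop of PaRCD}. Because $\phi'$ is faithful, and bijective on objects after identifying $(+)^*=+$, conjugation by $\phi'$ defines the action on $\bA'(\kk)$. Concretely, $t_{+,+}\mapsto\lambda t_{+,+}$, the symmetry and the (co)evaluations are fixed, and $A'\mapsto\Phi(t_{12},t_{23})A'$.

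The main obstacle I expect is the consistency check that fixing $b$ and $d$ is compatible with the non-trivial interplay between $A'$ and duality in \eqref{eq: Kontsevich Integral of unknot}. On the right-hand side, $\omega^{-1}$ is itself built from $b$, $d$ and $A'$ inside the prop, so $\tilde g(\omega)$ is forced by the definition of $\tilde g$ rather than being an independent relation. The thing to check is that no relation in $\bA'(\kk)$ beyond the image of (1)–(2) constrains $\omega$.

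To do this I would argue that, after Lemma~\ref{lemma: metric prop of PaRCD}, all relations of $\bA'(\kk)$ come from $\Pi(\hPaRCD^{\cyc})$. Then \eqref{eq: Kontsevich Integral of unknot} is a consequence of relation (2) applied to $A$ together with $z_4^*(A_{1,2,3})=A^{-1}_{2,3,1}$, and that consequence is already covered by Step 2. As a cross-check I would use the independence of $\omega$ from the associator: $\tilde g(\omega)$ equals $\omega$ computed with the associator $\Phi\ast\Phi_0$, which should be the same element.

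If that fails, I would instead allow $\tilde g(d)$ to be a corrected pairing $d\circ(\nu\otimes\id)$ for a central group-like $\nu\in\exp(\kk t_{11})$. I would then solve for $\nu$ using the zig-zag relations, which would recover the standard normalization trick from the Kontsevich integral.
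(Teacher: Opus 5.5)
Your handling of the envelope and of the invariance relation (2) via Theorem~\ref{thm: GRT is cyclic PaRCD} is fine. The gap is exactly at the point you flag as the main obstacle, and it enters already in Step~1.

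\textbf{Why Step~1 is not automatic.} With the strict presentation of $\Pi$, the zig-zag relations seem to involve only $b$, $d$ and identities. In the parenthesized setting they do not. For instance, $(b\otimes\id)$ ends at the bracketing $(12)3$, while $(\id\otimes d)$ starts at $1(23)$, so the two can only be composed through the associativity morphism. The relation that must actually be respected is therefore \eqref{eq: Kontsevich Integral of unknot}, $(\id\otimes b)A'(d\otimes\id)=\omega^{-1}$, and it contains $A'$.

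Since $g$ sends $A$ to $\Phi(t_{12},t_{23})\cdot A$, your $\tilde g$ with $b$ and $d$ fixed sends the left-hand side to $(\id\otimes b)\Phi(t_{12},t_{23})A'(d\otimes\id)$. In other words, it inserts the closure of $\Phi$ through a cup and a cap.

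\textbf{Why Step~2 does not cover it.} Theorem~\ref{thm: GRT is cyclic PaRCD} gives no control over this closure. Relation (2) moves one leg of an operation across the pairing, so it only relates operations of the same arity. By contrast, \eqref{eq: Kontsevich Integral of unknot} contracts two legs of $A'$ against each other. So it is not a consequence of (2) applied to $A$.

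Nor is $\omega$ just a name for the composite. It is a fixed element (the Kontsevich integral of the unknot), and the paper requires it to be preserved. Your cross-check via associator-independence of $\omega$ conflates the right $\GRT_{\kk}$-action on associators with the functor $\tilde g$ on $\bA'(\kk)$. It proves nothing about $\tilde g(\omega)$.

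\textbf{What the paper does instead.} The paper does not fix $d$. It sets $G'(b)=b$ and $G'(d)=(\id\otimes\nu)d$ with $\nu=\bigl((b\otimes\id)\Phi(t_{12},t_{23})(\id\otimes d)\bigr)^{-1}$, the inverse closure of $\Phi$. This is chosen so that the inserted $\Phi$ cancels in \eqref{eq: Kontsevich Integral of unknot}. It then uses \cite[Theorem A]{furusho_galois_2020} (the action on the unknot is trivial) to see that the right-hand side $\omega^{-1}$ is fixed.

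\textbf{Problems with your fallback.} Your fallback points in this direction but is mis-specified in two ways.
\begin{enumerate}
\item The correction is a closure of $\Phi$. A priori it lies in the whole one-strand endomorphism algebra, not in $\exp(\kk t_{11})$.
\item It is determined by the associator-inserted relation \eqref{eq: Kontsevich Integral of unknot}, not by the strict zig-zag relations. Solving the strict relations with $b$ fixed only returns $\nu=1$.
\end{enumerate}

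If you want to keep $d$ fixed, you must prove separately that the closure of every $\Phi\in\GRT_1$ is trivial. That is a genuine statement about $\Phi$, not a formal consequence of its cyclic compatibility.

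\textbf{Consequences for Steps~3 and~4.} Your explicit description in Step~4, with both the evaluation and the coevaluation fixed, is not the action the paper constructs. Also, once $d$ is twisted, the multiplicativity $\widetilde{g_1g_2}=\tilde g_1\tilde g_2$ in Step~3 is no longer automatic and has to be checked.
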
 

\begin{proof}
The graded Grothendieck--Teichmüller group $\GRT_{\kk}$ acts on $\hPaRCD$ by sending the pair $(\lambda, \Phi) \in \GRT_{\kk} $ to a unique isomorphism $G: \hPaRCD \rightarrow \hPaRCD$ defined by the following: 
\[G(H_{1,2}) = \lambda H_{1,2}, \quad G(I_1) = \lambda I_1,\quad G(X_{1,2}) = X_{1,2}, \quad G(A) = \Phi(t_{12}, t_{23}) \cdot A_{1,2,3}. \] 
$G$ induces a unique isomorphism of the props $\Env(G): \Env(\hPaRCD) \lrar \Env(\hPaRCD)$. Hence, there is a $\GRT_{\kk}$ action on the props $\Env(\hPaRCD) \cong \Env(\hPaRCD^{\cyc}) \cong {\Env(\PaRCD^{\cyc})}_{\kk}$ through $G$, using Proposition~\ref{cor: GRT action on Envelop}. 

We define the isomorphism $G': {(\Pi(\hPaRCD^{\cyc}))} \lrar {(\Pi(\hPaRCD^{\cyc}))}$ by extending the map $G$ to the duality pairing $(d,b)$, using the similar assignment described in \cite[Proposition~6.10]{RS2025cyclicgt} as follows \[G'(b) = b , \quad G'(d) = (\id \otimes \nu) d,\]
where $ \nu = \left( (b \otimes \id) \Phi(t_{12},t_{23})(\id \otimes d) \right)^{-1}$ is an automorphism of $\Pi(\hPaRCD^{\mathrm{cyc}}(1,1)$. 
The image of the pairing $(d,b)$ must satisfy the relation \eqref{eq: Kontsevich Integral of unknot}. Applying $G'$ on the left side of \eqref{eq: Kontsevich Integral of unknot} computes as follows: \[(\id \otimes b) \Phi(t_{12},t_{23}) A (((\id \otimes \nu) d )\otimes \id).\]
Now substituting $\nu$ gives 
\begin{equation}\label{eq: 4.6}
(\id \otimes b) \Phi(t_{12},t_{23}) A (((\id \otimes \left( (b \otimes \id) \Phi(t_{12},t_{23})(\id \otimes d) \right)^{-1}) d )\otimes \id).    
\end{equation}
After canceling the term $\Phi(t_{12},t_{23})$ and using the relation $(b \otimes \id) (\id \otimes d) = \id$, the equation \eqref{eq: 4.6} simplifies to \[(\id \otimes b) A (d \otimes \id),\] which is equal to $\omega^{-1}$. Now using \cite[Theorem A]{furusho_galois_2020}), $\GT_{\kk}$--action on the unknot is trivial. Therefore, we arrive at a trivial $\GRT_{\kk}$--action on $\omega^{-1}$. This implies that the image on the right side of \eqref{eq: Kontsevich Integral of unknot} under $G'$ is also $\omega^{-1}$. We conclude that \eqref{eq: Kontsevich Integral of unknot} is preserved under $G'$.

\end{proof}

\begin{remark}
The prop map $G'$ on the duality pairing $(d,b)$, that is, as follows 
\[G'(b) = b , \quad G'(d) = (\rho \otimes \id) d, \qquad  \text{where} \qquad \rho = \left((\id \otimes b) \Phi(t_{12}, t_{23})^{-1}(d \otimes \id)\right)^{-1},\]
also gives a $\GRT_{\kk}$ action. In this case, the duality pairing must satisfy the relation $(b \otimes \id) A^{-1} (\id \otimes d) = \omega$ under $G'$. Applying $G'$ on $(b \otimes \id) A^{-1} (\id \otimes d)$ gives \[(b \otimes \id) A^{-1}  \Phi(t_{12},t_{23})^{-1} (\id \otimes ((\rho \otimes \id) d))\] That simplifies to $\omega$ as follows 
\begin{equation*}
\begin{split}
(b \otimes \id) A^{-1}  \Phi(t_{12},t_{23})^{-1} (\id \otimes ((\rho \otimes \id)  d))
  = (b \otimes \id) A^{-1}  \Phi(t_{12},t_{23})^{-1} (\id \otimes ((\left((\id \otimes b) \Phi(t_{12}, t_{23})^{-1}(d \otimes \id)\right)^{-1} \otimes \id) d))\\
= (b \otimes \id) A^{-1}  (\id \otimes ((\left((\id \otimes b)(d \otimes \id)\right)^{-1} \otimes \id) d)) = (b \otimes \id) A^{-1}  (\id \otimes ((\id^{-1} \otimes \id) d))
= (b \otimes \id) A^{-1} (\id \otimes d)) = \omega
\end{split}
\end{equation*}

The desired relation holds under $G'$ using the same arguments as Proposition~\ref{prop: GRT action on metric prop}.
\end{remark}

In the category of parenthesized framed tangles, the composition of the cap $\cap:(+{-})\to \varnothing$ and the cup $\cup:\varnothing \to(+{-})$ morphisms is identified with the $0$--framed unknot $\cap \,\circ \,\cup:= U$. By Theorem~\cite[Theorem 4.7]{KT_Tangles}, given a Drinfeld associator $\Phi$, we can define an isomorphism  $Z_{\Phi}:q\mathbf{T}\rightarrow \bA(\kk)[[\hbar]]$ from the completed category of $q$-tangles that has the property that \[
Z_{\Phi}(U)=Z_{\Phi}(\cap)\circ Z_{\Phi}(\cup)=:\omega, 
\] 
Under the equivalence $\psi: q\mathbf{T'} \rightarrow \Pi(\PaRB^{\cyc})$ from \cite[Proposition~6.5]{RS2025cyclicgt}, where $q\mathbf{T'} := q\mathbf{T}/ \{(+)^* = +\}$ , we have $\psi(\cap) = d \in \Pi(\PaRB^{\cyc})(2,0)$ and $\psi(\cup) = b \in \Pi(\PaRB^{\cyc})(0,2)$. We conclude that for every Drinfeld associator $\Phi$, we have the following commutative diagram of the equivalences of prounipotent categories
\[\begin{tikzcd} q\mathbf{T'}_{\kk}\arrow[d, "\simeq"] \arrow[r, "Z_{\Phi}"] & \bA'(\kk)\arrow[d, "\simeq"]\\
\Pi(\PaRB^{\cyc})_{\kk} \arrow[r, "\varphi_{\Phi}"] & \Pi(\PaRCD^{\cyc})_{\kk}\end{tikzcd}\] with $\varphi_{\Phi}(d\circ b)=Z_{\Phi}(U)=\omega$.

\begin{remark}\label{remark: two GRT actions}
The $\GRT_{\kk}$-action on $\bA'(\kk)$ can equivalently be seen as the image of the $\GT_{\kk}$-action on the category of framed $q$-tangles $q\bT'$ from \cite{RS2025cyclicgt} under the Kontsevich isomorphism. Moreover, Furusho \cite{furusho_galois_2020} constructed a $\GRT_{\kk}$-action on chord diagrams via the ABC-construction. The $\GRT_{\kk}$-action on chord diagrams with self dual objects from Furusho and Theorem~\ref{prop: GRT action on metric prop} agree on generators $I$, $H$ and $X$. The potential difference comes in the actions on associativity $A_{1,2,3}$ and duality pairing $(b,d)$. In Furusho's construction, the corresponding action on duality pairing is determined by C-move, which involves the wheeling element $\omega$ directly than the correction term $\nu$. Currently it is not known whether these two $\GRT_{\kk}$-actions produce the same or different automorphisms of $\bA'(\kk)$. 

\end{remark}


\bibliographystyle{plain}
\bibliography{bib.bib}

\end{document}